       \title{$K$- and $L$-theory of group rings over $\GL_n(\IZ)$}
       \author{Arthur Bartels}
       \author{Wolfgang L\"uck}
       \author{Holger Reich}
       \author{Henrik R\"uping}
       \address{Westf\"alische Wilhelms-Universit\"at M\"unster\\
               Mathematisches Institut\\
               Einsteinstr.~62,
               D-48149 M\"unster, Germany}
       \address{Rheinische Wilhelms-Universit\"at Bonn\\
               Mathematisches Institut\\
               Endenicher Allee 62, 53115 Bonn, Germany}
        \address{Freie Universit{\"a}t Berlin\\
               Institut f{\"u}r Mathematik\\
               Arnimallee 7,  14195 Berlin  Germany}
      \address{Rheinische Wilhelms-Universit\"at Bonn\\
               Mathematisches Institut\\
               Endenicher Allee 62, 53115 Bonn, Germany}
      \email{bartelsa@math.uni-muenster.de}
      \urladdr{http://www.math.uni-muenster.de/u/bartelsa}
        \email{wolfgang.lueck@him.uni-bonn.de}
      \urladdr{http://www.him.uni-bonn.de/lueck/}
        \email{holger.reich@fu-berlin.de}
      \urladdr{http://www.math.fu-berlin.de/groups/top/members/reich.html}
      \email{henrik.rueping@hcm.uni-bonn.de}
         \date{April 2013}
     \keywords{Algebraic $K$-and $L$-theory of group rings 
    with arbitrary coefficients,
  Farrell-Jones Conjecture, $\GL_n(\IZ)$.}
    \subjclass[2010]{18F25, 19A31, 19B28, 19G24, 57N99}
\DeclareMathAlphabet{\matheurm}{U}{eur}{m}{n}
\newcommand{\EGF}[2]{E_{#2}(#1)}
\newcommand{\intgf}[2]{\mbox{$\int_{#1} #2$}}
\DeclareMathOperator{\aut}{aut}
\DeclareMathOperator{\id}{id}
\DeclareMathOperator{\pt}{pt}
\DeclareMathOperator{\pr}{pr}
\DeclareMathOperator{\rk}{rk}
\DeclareMathOperator{\sign}{sign}
\DeclareMathOperator{\Sym}{Sym}
\DeclareMathOperator{\tr}{tr}
\DeclareMathOperator{\vol}{vol}
\DeclareMathOperator{\Wh}{Wh}
\newcommand{\VCyc}{{\mathcal{V}\mathcal{C}\text{yc}}}
  \newcommand{\IN}{\mathbb{N}}
  \newcommand{\IQ}{\mathbb{Q}}
  \newcommand{\IR}{\mathbb{R}}
  \newcommand{\IZ}{\mathbb{Z}}
  \newcommand{\cala}{\mathcal{A}}
  \newcommand{\calf}{\mathcal{F}}
  \newcommand{\calh}{\mathcal{H}}
  \newcommand{\call}{\mathcal{L}}
  \newcommand{\calo}{\mathcal{O}}
  \newcommand{\calu}{\mathcal{U}}
  \newcommand{\calv}{\mathcal{V}}
  \newcommand{\calw}{\mathcal{W}}
  \newcommand{\bfK}{{\mathbf K}}
  \newcommand{\bfL}{{\mathbf L}}
\newcounter{commentcounter}
\theoremstyle{plain}
\newtheorem{theorem}{Theorem}[section]
\newtheorem{lemma}[theorem]{Lemma}
\newtheorem{corollary}[theorem]{Corollary}
\newtheorem{proposition}[theorem]{Proposition}
\newtheorem*{theorem*}{Theorem}
\newtheorem*{mtheorem}{Main Theorem}
\newtheorem*{gtheorem}{General Theorem}
\theoremstyle{definition}
\newtheorem{definition}[theorem]{Definition}
\theoremstyle{remark}
\newtheorem{remark}[theorem]{Remark}
\newtheorem{example}[theorem]{Example}
\let\c@equation=\c@theorem\makeatother
\newcommand{\x}{{\times}}
\newcommand{\e}{{\varepsilon}}
\newcommand{\CAT}{{\operatorname{CAT}}}
\newcommand{\FS}{\mathit{FS}}
\newcommand{\ev}{\mathit{ev}}
\newcommand{\SL}{{\mathit{SL}}}
\newcommand{\GL}{{\mathit{GL}}}
\newcommand{\ignore}[1]{}
\begin{document}

\begin{abstract}
  We prove the $K$- and $L$-theoretic Farrell-Jones Conjecture (with
  coefficients in  additive categories) for $\GL_n(\IZ)$.
\end{abstract}

\maketitle




\section*{Introduction}
\label{sec:introduction}

  The Farrell-Jones Conjecture predicts a formula for the $K$- and $L$-theory of
  group rings $R[G]$.
  This formula describes these groups in terms of group homology
  and $K$- and $L$-theory of group rings $RV$, where $V$
  varies over the family $\VCyc$ of virtually cyclic subgroups of $G$.
 
  \begin{mtheorem}
    Both the $K$-theoretic and the $L$-theoretic Farrell-Jones Conjecture 
    (see Definition~\ref{def:K-theoretic_Farrell-Jones_Conjecture}
    and~\ref{def:L-theoretic_Farrell-Jones_Conjecture}) hold for
    $\GL_n(\IZ)$.
  \end{mtheorem}

  We will generalize this theorem in the General Theorem below. 
  In particular it also holds for arithmetic groups 
  defined over number fields, compare Example~\ref{exa:ring_of_integers},
  and extends to the more general version ``with wreath products''.
  
  For cocompact lattices in almost connected Lie groups this result holds
  by Bartels-Farrell-L\"uck~\cite{Bartels-Farrell-Lueck(2011cocomlat)}. 
  The lattice $\GL_n(\IZ)$  has finite covolume but is not cocompact. 
  It is a long standing question whether the Baum-Connes Conjecture 
  holds for $\GL_n(\IZ)$.

  For torsion free discrete subgroups of $\GL_n(\IR)$, or more generally, for
  fundamental groups of A-regular complete connected non-positive curved
  Riemannian manifolds, the Farrell-Jones Conjecture with coefficients in $\IZ$
  has been proven by Farrell-Jones~\cite{Farrell-Jones(1998)}.


\subsection*{The formulation of the Farrell-Jones Conjecture}

  \begin{definition}[$K$-theoretic FJC]
      \label{def:K-theoretic_Farrell-Jones_Conjecture}
    Let $G$ be a group and let $\calf$ be a family of subgroups.  
    Then $G$ satisfies the \emph{$K$-theoretic Farrell-Jones Conjecture  
     with respect to $\calf$} if for any
    additive $G$-category $\cala$ the assembly map
    \begin{eqnarray*}
      &  H_n^G\bigl(\EGF{G}{\calf};\bfK_{\cala}\bigr) \to
      H_n^G\bigl(\pt;\bfK_{\cala}\bigr)
      = K_n\left(\intgf{G}{\cala}\right)
      &
    \end{eqnarray*}
    induced by the projection $\EGF{G}{\calf} \to \pt$ is bijective for all 
    $n \in \IZ$.
    If this map is bijective for all $n \leq 0$ and surjective for $n = 1$,
    then we say $G$ satisfies the \emph{$K$-theoretic Farrell-Jones 
     Conjecture up to dimension  $1$ 
     with respect to $\calf$}.
    
    If the family $\calf$ is not mentioned, it is by default the family $\VCyc$ of
    virtually cyclic subgroups.
  \end{definition}

  If one chooses $\cala$ to be (a skeleton of) the category of finitely generated
  free $R$-modules with trivial $G$-action, then
  $K_n\left(\intgf{G}{\cala}\right)$ is just the algebraic $K$-theory $K_n(RG)$ of
  the group ring $RG$.
 
  If $G$ is torsion free, $R$ is a regular ring, and $\calf$ is $\VCyc$, then the
  claim boils down to the more familiar statement that the classical
  assembly map $H_n(BG;\bfK_R) \to K_n(RG)$ from the homology theory associated to
  the (non-connective) algebraic $K$-theory spectrum of $R$ applied to the
  classifying space $BG$ of $G$ to the algebraic $K$-theory of $RG$ is a
  bijection.  
  If we restrict further to the case $R = \IZ$ and $n \le 1$, then
  this implies the vanishing of the Whitehead group $\Wh(G)$ of $G$, of the
  reduced projective class group $\widetilde{K}_0(\IZ G)$, and of all negative
  $K$-groups $K_n(\IZ G)$ for $n \le -1$.

  \begin{definition}[$L$-theoretic FJC] 
    \label{def:L-theoretic_Farrell-Jones_Conjecture}
    Let $G$ be a group and let $\calf$ be a family of subgroups.  Then $G$
    satisfies the \emph{$L$-theoretic Farrell-Jones Conjecture 
    with respect to $\calf$} if for any additive
    $G$-category with involution $\cala$ the assembly map
    \begin{eqnarray*}
       & 
        H_n^G\bigl(\EGF{G}{\calf};\bfL_{\cala}^{\langle - \infty\rangle}\bigr) \to
        H_n^G\bigl(\pt;\bfL_{\cala}^{\langle - \infty\rangle}\bigr)
        = L_n^{\langle - \infty\rangle}\left(\intgf{G}{\cala}\right)
       &
    \end{eqnarray*}
    induced by the projection $\EGF{G}{\calf} \to \pt$ is bijective for all 
    $n \in \IZ$.

    If the family $\calf$ is not mentioned, it is by default the family 
    $\VCyc$ of virtually cyclic subgroups.
  \end{definition}

  Given a group $G$, a \emph{family of subgroups} $\calf$ is a collection of
  subgroups of $G$ that is closed under conjugation and taking subgroups.
  For the notion of a \emph{classifying space $\EGF{G}{\calf}$ for a family
  $\calf$} we refer for instance to the survey article~\cite{Lueck(2005s)}.

  The natural choice for $\calf$ in the Farrell-Jones Conjecture is the family 
  $\VCyc$ of virtually cyclic subgroups but for inductive arguments it is 
  useful to consider other families as well.

  \begin{remark}[Relevance of the additive categories as coefficients]
    The versions of the Farrell-Jones Conjecture appearing in
    Definition~\ref{def:K-theoretic_Farrell-Jones_Conjecture} and
    Definition~\ref{def:L-theoretic_Farrell-Jones_Conjecture} are formulated and
    analyzed in~\cite{Bartels-Lueck(2009coeff)}, \cite{Bartels-Reich(2007coeff)}.
    They encompass the versions for group rings $RG$ over arbitrary rings $R$,
    where one can built in a twisting into the group ring or treat more generally
    crossed product rings $R \ast G$ and one can allow orientation homomorphisms
    $w \colon G \to \{\pm 1\}$ in the $L$-theory case. 
    Moreover, inheritance properties, e.g., passing to subgroups, 
    finite products, finite free products, and directed colimits, 
    are built in and one does not have to pass to fibered versions anymore.
  \end{remark}

  The original source for the (Fibered) Farrell-Jones Conjecture is the paper by
  Farrell-Jones~\cite[1.6 on p.~257 and~1.7 on p.~262]{Farrell-Jones(1993a)}.
  For more information about the Farrell-Jones Conjecture, its relevance and its
  various applications to prominent conjectures due to Bass, Borel, Kaplansky,
  Novikov and Serre, we refer 
  to~\cite{Bartels-Lueck-Reich(2008appl),Lueck-Reich(2005)}.

  We will often abbreviate Farrell-Jones Conjecture to FJC.
  

  \subsection*{Extension to more general rings and groups}
   \label{subsec:Extension_to_more_general_rings_and_groups}

  We will see that it is not hard to generalize the Main Theorem as follows.

  \begin{gtheorem} \label{the:general_theorem}
    Let $R$ be a ring whose underlying abelian group is finitely generated.  
    Let $G$ be a group which is commensurable to a subgroup of $\GL_n(R)$ 
    for some natural number $n$.

    Then $G$ satisfies both the $K$-theoretic and the $L$-theoretic Farrell-Jones
    Conjecture  with  wreath products~\ref{def:FJwF}.
  \end{gtheorem}

  Two groups $G_1$ and $G_2$ are called \emph{commensurable} if they contain
  subgroups $G_1' \subseteq G_1$ and $G_2' \subseteq G_2$ of finite index such
  that $G_1'$ and $G_2'$ are isomorphic.
  In this case $G_1$ satisfies the FJC with wreath products if and only if
  $G_2$ does, see Remark~\ref{rem:inheritance-FJCwreath}.

  \begin{example}[Ring of integers]\label{exa:ring_of_integers}
    Let $K$ be an algebraic number field and $\calo_K$ be its ring of
    integers. 
    Then $\calo_K$ considered as abelian group is finitely generated
    free (see~\cite[Chapter~I, Proposition~2.10 on p.~12]{Neukirch(1999)}).
    Hence by the General Theorem any group $G$ which is
    commensurable to a subgroup of $\GL_n(\calo_K)$ for some natural number $n$
    satisfies both the $K$-theoretic and $L$-theoretic FJC with  wreath products.  
    This includes in particular arithmetic groups over number fields.
  \end{example}


  \subsection*{Discussion of the proof}

  The proof of the FJC for $\GL_n(\IZ)$
  will use the transitivity principle~\cite[Theorem~A.10]{Farrell-Jones(1993a)}, 
  that we recall here. 

  \begin{proposition}[Transitivity principle]
    \label{prop:trans-princ}
    Let $\calf \subset \calh$ be families of subgroups of $G$.
    Assume that $G$ satisfies the FJC with respect to $\calh$
    and that each $H \in \calh$ satisfies the FJC with
    respect to $\calf$.

    Then $G$ satisfies the FJC with respect to $\calf$.  
  \end{proposition}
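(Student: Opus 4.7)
The plan is to factor the assembly map for $\calf$ through the assembly map for $\calh$ and then reduce the remaining comparison to the hypothesis on members of $\calh$. Because $\calf \subset \calh$, the universal property of $\EGF{G}{\calh}$ produces a $G$-map $\EGF{G}{\calf} \to \EGF{G}{\calh}$, unique up to $G$-homotopy, so the projection to a point factors as $\EGF{G}{\calf} \to \EGF{G}{\calh} \to \pt$. Applying the equivariant homology theory $H_*^G(-;\bfE)$ (with $\bfE = \bfK_{\cala}$ or $\bfE = \bfL_{\cala}^{\langle -\infty\rangle}$) turns the $\calf$-assembly map into the composition of an induced map and the $\calh$-assembly map. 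By the standing assumption on $G$, the latter is an isomorphism, so everything comes down to proving that
\[
H_n^G\bigl(\EGF{G}{\calf};\bfE\bigr) \longrightarrow H_n^G\bigl(\EGF{G}{\calh};\bfE\bigr)
\]
is an isomorphism for every $n \in \IZ$.

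Next I would run a cell-by-cell induction on $\EGF{G}{\calh}$, all of whose isotropy groups lie in $\calh$ by construction. The long exact sequences attached to attaching $G$-cells, together with the compatibility of $H_*^G(-;\bfE)$ with filtered colimits, reduce the claim to the case of a single orbit $X = G/H$ with $H \in \calh$; that is, to showing that
\[
H_n^G\bigl(G/H \times \EGF{G}{\calf};\bfE\bigr) \longrightarrow H_n^G(G/H;\bfE)
\]
is bijective. The induction structure that is part of the data of the equivariant homology theory identifies this map with the $H$-equivariant map $H_n^H(\EGF{G}{\calf}|_H;\bfE) \to H_n^H(\pt;\bfE)$, where $\EGF{G}{\calf}|_H$ denotes $\EGF{G}{\calf}$ regarded as an $H$-space by restriction.

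The final step is to recognise $\EGF{G}{\calf}|_H$ as a model for the classifying space $\EGF{H}{\calf_H}$ for the family $\calf_H := \{F \cap H \mid F \in \calf\}$ inherited from $\calf$: the $H$-isotropy groups still lie in $\calf_H$, and the fixed-point sets for subgroups $L \in \calf_H$ are contractible because they coincide with the $L$-fixed-point sets of $\EGF{G}{\calf}$ and $L$ belongs to $\calf$. Under this identification the displayed map becomes the FJC assembly map for $H$ with respect to $\calf_H$, which is an isomorphism by the assumption on the members of $\calh$. I expect no serious conceptual obstacle; the one step that needs a careful reference is the use of the induction structure to rewrite $H_n^G(G/H \times -;\bfE)$ as $H_n^H(-;\bfE)$, but this is already set up in the references underlying the definitions of the two assembly maps used above.
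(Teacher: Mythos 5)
Your proposal is correct and follows essentially the same route as the proof the paper relies on: the paper does not reprove Proposition~\ref{prop:trans-princ} but refers to \cite[Theorem~A.10]{Farrell-Jones(1993a)} and \cite[Theorem~1.11]{Bartels-Farrell-Lueck(2011cocomlat)}, where exactly this argument appears --- factor the $\calf$-assembly map through $\EGF{G}{\calh}$, compare the two $G$-homology theories $X \mapsto H_n^G(X \times \EGF{G}{\calf};\bfE)$ and $X \mapsto H_n^G(X;\bfE)$ by induction over the $G$-cells of $\EGF{G}{\calh}$, and use the induction structure together with the observation that $\EGF{G}{\calf}$ restricted to $H \in \calh$ is a model for $\EGF{H}{\calf \cap H}$. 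The only step you leave implicit is the routine identification of $\EGF{G}{\calf}$ with $\EGF{G}{\calh} \times \EGF{G}{\calf}$ (using $\calf \subset \calh$), which turns the map $H_n^G(\EGF{G}{\calf};\bfE) \to H_n^G(\EGF{G}{\calh};\bfE)$ into a projection-induced map over $\EGF{G}{\calh}$, as needed for the cellwise comparison.
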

 
  This principle applies to all versions of the FJC discussed above. 
  In this form it can be found for example 
  in~\cite[Theorem~1.11]{Bartels-Farrell-Lueck(2011cocomlat)}.
  
  The main step in proving the FJC for $\GL_n(\IZ)$
  is to prove that $\GL_n(\IZ)$ satisfies the FJC
  with respect to a family $\calf_n$.
  This family is defined at the beginning of 
  Section~\ref{sec:Transfer_Reducibility_of_SL_n(IZ)}.
  This family is larger than $\VCyc$ and contains for example $\GL_k(\IZ)$
  for $k < n$.  
  We can then use induction on $n$ to prove that every group from
  $\calf_n$ satisfies the {FJC}.
  At this point  we also  use the fact that
  virtually poly-cyclic groups satisfy the {FJC}.

  To prove that $\GL_n(\IZ)$ satisfies the FJC with respect to $\calf_n$
  we will apply two results from~\cite{Bartels-Lueck(2012annals), Wegner(2012FJ_CAT0)}.
  Originally these results were used to prove that $\CAT(0)$-groups
  satisfy the {FJC}.
  Checking that they are applicable to $\GL_n(\IZ)$ is more difficult.
  While $\GL_n(\IZ)$ is not a $\CAT(0)$-group, it does act on a 
  $\CAT(0)$-space $X$. 
  This action is proper and isometric, but not cocompact.
  Our main technical step is to show that the flow space associated
  to this $\CAT(0)$-space admits \emph{long $\calf_n$-covers at infinity},
  compare Definition~\ref{def:at-infinity_plus_periods}.
  
  In Section~\ref{sec:The_space_of_inner_products_and_the_volume_function} we
  analyze the $\CAT(0)$-space $X$. 
  On it we introduce, following Grayson~\cite{Grayson(1984)}, certain volume
  functions and analyze them from a metric point of view. 
  These functions will be used to cut off a suitable well-chosen 
  neighborhood of infinity so that
  the $GL_n(\IZ)$-action on the complement is cocompact.
  In Section~\ref{sec:Sublattices_of_Zn} we study sublattices in $\IZ^n$. 
  This will be needed to find the neighborhood of infinity mentioned above.
  Here we prove a crucial estimate in Lemma~\ref{lem:estimate_for_d_W}.
  
  As outlined this proof works best for the $K$-theoretic
  {FJC} up to dimension $1$; this case is contained in 
  Section~\ref{sec:Transfer_Reducibility_of_SL_n(IZ)}.
  The modifications needed for the full $K$-theoretic FJC 
  are discussed in Section~\ref{sec:Strong-Trans-Red-GLnZ}
  and use results of Wegner~\cite{Wegner(2012FJ_CAT0)}. 

  For $L$-theory the induction does not work quite as smoothly.
  The appearance of index $2$ overgroups in the statement 
  of~\cite[Theorem~1.1(ii)]{Bartels-Lueck(2012annals)} force us
  to use a stronger induction hypothesis: we need to assume
  that finite overgroups of $\GL_k(\IZ)$, $k < n$ satisfy
  the FJC.
  (It would be enough to consider overgroups of index $2$,
  but this seems not to simplify the argument.)  
  A good formalism to accommodate this is the FJC with
  wreath products (which implies the FJC).
  In Section~\ref{sec:wreath-product_and_transfer-reducibility}
  we provide the necessary extensions of the results
  from~\cite{Bartels-Lueck(2012annals)} for this version of the {FJC}.
  In Section~\ref{sec:FJ-wreath} we then prove the $L$-theoretic FJC with
  wreath products for $\GL_n(\IZ)$.

  In Section~\ref{sec:Proof_of_the_General_Theorem} we give the proof
  of the General Theorem.


\subsection*{Acknowledgements}
\label{subsec:Acknowledgements}

  We are grateful to Dan Grayson for fruitful discussions about his
  paper~\cite{Grayson(1984)}. 
  We also thank Enrico Leuzinger for answering related questions.

  The work was financially supported by 
  SFB~878 \emph{Groups, Geometry and Actions} in M\"unster, 
  the HCM (Hausdorff Center for Mathematics) in Bonn, 
  and the Leibniz-Preis of the second author.  
  Parts of the paper were developed during the Trimester Program 
  \emph{Rigidity} at the HIM (Hausdorff
  Research Institute for Mathematics) in Bonn in the fall of 2009.


\typeout{-------- Section 1: The space of inner products and the volume
  function --------------}

\section{The space of inner products and the volume function}
\label{sec:The_space_of_inner_products_and_the_volume_function}

Throughout this section let $V$ be an $n$-dimensional real vector space. Let
$\widetilde{X}(V)$ be the set of all inner products on $V$. We want to examine
the smooth manifold $\widetilde{X}(V)$ and equip it with an $\aut(V)$-invariant
complete Riemannian metric with non-positive sectional curvature.  With respect
to this structure we will examine a certain volume function.  We try to keep all
definitions as intrinsic as possible and then afterward discuss what happens
after choices of extra structures (such as bases).


\subsection{The space of inner products}
\label{subsec:The_space_of_inner_products}

We can equip $V$ with the structure of a smooth manifold by requiring that any
linear isomorphism $V \to \IR^n$ is a diffeomorphism with respect to the
standard smooth structure on $\IR^n$. In particular $V$ carries a preferred
structure of a (metrizable) topological space and we can talk about limits of
sequences in $V$. We obtain a canonical trivialization of the tangent bundle
$TV$
\begin{eqnarray}
  \phi_V \colon V \times V & \to & TV
  \label{trivialization_phi_V_of_TV}
\end{eqnarray}
which sends $(x,v)$ to the tangent vector in $T_xV$ represented by the smooth
path $\IR \to V, \; t \mapsto x + t \cdot v$. The inverse sends the tangent
vector in $TV$ represented by a path $w \colon (-\epsilon,\epsilon) \to V$ to
$(w(0),w'(0))$. If $f \colon V \to W$ is a linear map, the following diagram
commutes
\begin{eqnarray*}
  \xymatrix{
    V \times V \ar[r]^{\phi_V}_{\cong} \ar[d]^{f \times f} &  TV \ar[d]^{Tf}
    \\
    W \times W \ar[r]^{\phi_W}_{\cong} & TW
  }
\end{eqnarray*}

Let $\hom(V,V^*)$ be the real vector space of linear maps $V \to V^*$ from $V$
to the dual $V^*$ of $V$.  In the sequel we will always identify $V$ and
$(V^*)^*$ by the canonical isomorphism $V \to (V^*)^*$ which sends $v \in V$ to
the linear map $V^* \to \IR,\; \alpha \mapsto \alpha(v)$. Hence for $s \in
\hom(V,V^*)$ its dual $s^* \colon (V^*)^* = V \to V^*$ belongs to $\hom(V,V^*)$
again. Let $\Sym(V) \subseteq \hom(V,V^*)$ be the subvector space of elements
$s\in \hom(V,V^*)$ satisfying $s^* = s$.  We can identify $\Sym(V)$ with the set
of all bilinear symmetric pairings $V \times V \to \IR$, namely, given $s
\in\Sym(V)$ we obtain such a pairing by $(v,w) \mapsto s(v)(w)$.  We will often
write
\[
s(v,w) := s(v)(w).
\]
Under the identification above the set $\widetilde{X}(V)$ of inner products on
$V$ becomes the open subset of $\Sym(V)$ consisting of those elements $s \in
\Sym(V)$ for which $s \colon V \to V^*$ is bijective and $s(v,v) \ge 0$
holds for all $v \in V$, or, equivalently, for which $s(v,v) \ge 0$ holds for
all $v \in V$ and we have $s(v,v) = 0 \Leftrightarrow v = 0$.  In particular
$\widetilde{X}(V)$ inherits from the vector space $\Sym(V)$ the structure of a
smooth manifold.

Given a linear map $f \colon V \to W$, we obtain a linear map $\Sym(f) \colon
\Sym(W) \to \Sym(V)$ by sending $s \colon W \to W^*$ to $f^* \circ s \circ f$. A
linear isomorphism $f \colon V \to W$ induces a bijection $\widetilde{X}(f)
\colon \widetilde{X}(W) \to \widetilde{X}(V)$.  Obviously this is a
contravariant functor, i.e., $\widetilde{X}(g \circ f) = \widetilde{X}(f) \circ
\widetilde{X}(g)$.  If $\aut(V)$ is the group of linear automorphisms of $V$, we
obtain a right $\aut(V)$-action on $\widetilde{X}(V)$.

If $f \colon V \to W$ is a linear map and $s_V$ and $s_W$ are inner products on
$V$ and $W$, then the adjoint of $f$ with respect to these inner products is
$s_V^{-1} \circ f^* \circ s_W \colon W \to V$.

Consider a natural
number $m \le n:=\dim(V)$.  There is a canonical isomorphism
\begin{eqnarray*}
  \beta_m(V) \colon \Lambda^mV^* \xrightarrow{\cong} (\Lambda^mV)^*
\end{eqnarray*}
which maps $\alpha_1 \wedge \alpha_2 \wedge \ldots \wedge \alpha_m$ to the map
$\Lambda^m V \to \IR$ sending $v_1 \wedge v_2 \wedge \cdots \wedge v_m$ to
$\sum_{\sigma \in S_m} \sign(\sigma) \cdot \prod_{i = 1}^m
\alpha_i(v_{\sigma(i)})$.  
Let $s \colon V \to V^*$ be an inner product on $V$.
We obtain an inner product $s_{\Lambda^m}$ on
$\Lambda^m V$ by the composite
\[
s_{\Lambda^m} \colon \Lambda ^m V \xrightarrow{\Lambda^m s} \Lambda ^m V^*
\xrightarrow{\beta_m(V)} (\Lambda^m V)^*.
\]
One easily checks by a direct calculation for elements $v_1,v_2, \ldots,
v_m,w_1,w_2, \ldots, w_m$ in $V$
\begin{eqnarray}
  s_{\Lambda^m}\bigl(v_1 \wedge \cdots \wedge v_m,w_1 \wedge \cdots \wedge w_m\bigr)
  & = & 
  \det\bigl((s(v_i,w_j))_{i,j}\bigr),
  \label{Lambdams_and_det}
\end{eqnarray}
where $(s(v_i,w_j))_{i,j}$ is the obvious symmetric $(m,m)$-matrix.

Next we want to define a Riemannian metric $g$ on $\widetilde{X}(V)$. Since
$\widetilde{X}(V)$ is an open subset of $\Sym(V)$ and we have a canonical
trivialization $\phi_{\Sym(V)}$ of $T\Sym(V)$
(see~\eqref{trivialization_phi_V_of_TV}), we have to define for every $s \in
\widetilde{X}(V)$ an inner product $g_s$ on $\Sym(V)$. It is given by
\[
g_s(u,v) := \tr(s^{-1} \circ v \circ s^{-1} \circ u),
\]
for $u,v \in \Sym(V)$.
Here $\tr$ denotes the trace of endomorphisms of $V$.
Obviously $g_s(-,-)$ is bilinear and
symmetric since the trace is linear and satisfies $\tr(ab) = \tr(ba)$.  Since
$s^{-1} \circ (s^{-1} \circ u)^* \circ s = s^{-1}\circ u$ holds, the
endomorphism $s^{-1} \circ u \colon V \to V$ is selfadjoint with respect to the
inner product $s$.  Hence $\tr(s^{-1} \circ u \circ s^{-1} \circ u) \ge 0$
holds and we have $\tr(s^{-1} \circ u \circ s^{-1} \circ u) = 0$ if and only if
$s^{-1} \circ u= 0$, or, equivalently, $u = 0$. Hence $g_s$ is an inner product
on $\Sym(V)$.  We omit the proof that $g_s$ depends smoothly on $s$. Thus we
obtain a Riemannian metric $g$ on $\widetilde{X}(V)$.

\begin{lemma} \label{lem:metric__aut(V)_invariant} The Riemannian metric on
  $\widetilde{X}$ is $\aut(V)$-invariant.
\end{lemma}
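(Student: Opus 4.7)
The plan is to verify directly that for each $f \in \aut(V)$ the map $R_f := \widetilde{X}(f) \colon \widetilde{X}(V) \to \widetilde{X}(V)$, $s \mapsto f^* \circ s \circ f$, is a Riemannian isometry with respect to $g$. The key observation that makes this essentially a one-line computation is that $\widetilde{X}(V)$ sits as an open subset inside the vector space $\Sym(V)$ and that $R_f$ is the restriction of the \emph{linear} self-map $\Sym(f)$ of $\Sym(V)$. Consequently, via the canonical trivialization $\phi_{\Sym(V)}$ of $T\Sym(V)$ described in~\eqref{trivialization_phi_V_of_TV}, the differential of $R_f$ at any $s$ is again given by $\Sym(f)$; explicitly, $d_s R_f(u) = f^* \circ u \circ f$ for every $u \in T_s\widetilde{X}(V) \cong \Sym(V)$.

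With this in hand, the next step is to substitute into the definition of $g$. Writing $s' := R_f(s) = f^* \circ s \circ f$, one has $(s')^{-1} = f^{-1} \circ s^{-1} \circ (f^*)^{-1}$, so that
\[
(s')^{-1} \circ d_s R_f(u) \;=\; f^{-1} \circ s^{-1} \circ u \circ f,
\]
and analogously for $v$. Multiplying the two expressions in the relevant order yields
\[
(s')^{-1} \circ d_s R_f(v) \circ (s')^{-1} \circ d_s R_f(u) \;=\; f^{-1} \circ \bigl(s^{-1} \circ v \circ s^{-1} \circ u\bigr) \circ f,
\]
and the cyclic invariance of the trace then absorbs the conjugation by $f$, giving $g_{s'}\bigl(d_s R_f(u), d_s R_f(v)\bigr) = \tr(s^{-1} \circ v \circ s^{-1} \circ u) = g_s(u,v)$, which is precisely the asserted invariance.

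There is essentially no genuine obstacle: the argument rests only on the linearity of $\Sym(f)$ (which trivializes the differential of $R_f$) and on the identity $\tr(AB) = \tr(BA)$, both already used in the construction of $g$. The only point demanding a little care is bookkeeping of adjoints and inverses; once $(s')^{-1}$ is written out correctly, the outer $f^{-1}$ and $f$ cancel automatically inside the trace, and no further structure of $\widetilde X(V)$ is required.
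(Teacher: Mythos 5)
Your proof is correct and follows essentially the same route as the paper: both identify the differential of $\widetilde{X}(f)$ with the linear map $u \mapsto f^* \circ u \circ f$ (via the canonical trivialization of $T\Sym(V)$), substitute into the definition of $g$, and use cyclic invariance of the trace to cancel the conjugation by $f$. Your explicit remark that the differential is the linear map $\Sym(f)$ itself is only implicit in the paper's computation, but the argument is the same.
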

\begin{proof}
  We have to show for an automorphism $f \colon V \to V$, an element $s \in
  \widetilde{X}(V)$ and two elements $u,v \in T_s\widetilde{X}(V) = T_s\Sym(V) =
  \Sym(V)$ that
  \[
  g_{\widetilde{X}(f)(s)}\bigl(T_s\widetilde{X}(f)(u),
  T_s\widetilde{X}(f)(v)\bigr) = g_s(u,v)
  \]
  holds. This follows from the following calculation:
  \begin{eqnarray*}
    \lefteqn{g_{\widetilde{X}(f)(s)}\bigl(T_s\widetilde{X}(f)(u), T_s\widetilde{X}(f)(v)\bigr)}
    &  &
    \\
    & = &  
    \tr\left(\widetilde{X}(f)(s)^{-1} \circ T_s\widetilde{X}(f)(v) \circ \widetilde{X}(f)(s)^{-1} \circ T_s\widetilde{X}(f)(u)\right)
    \\
    & = &
    \tr\left((f^* \circ s \circ f)^{-1} \circ (f^* \circ v \circ f) \circ (f^* \circ s \circ f)^{-1}\circ (f^* \circ u \circ f) \right)
    \\
    & = &
    \tr\left(f^{-1}  \circ s^{-1} \circ (f^*)^{-1} \circ f^* \circ v \circ f \circ f^{-1}  \circ s^{-1} \circ (f^*)^{-1} \circ f^* \circ u \circ f \right)
    \\
    & = & 
    \tr\left(f^{-1}  \circ s^{-1} \circ  v \circ s^{-1} \circ u \circ f \right)
    \\
    & = & 
    \tr\left(s^{-1} \circ  v \circ s^{-1} \circ u \circ f \circ f^{-1} \right)
    \\
    & = & 
    \tr\left(s^{-1} \circ  v \circ s^{-1} \circ u\right)
    \\
    & = & 
    g_s(u,v).
  \end{eqnarray*}
\end{proof}

Recall that so far we worked with the natural right
action of $\aut(V)$ on $\widetilde{X}(V)$.
In the sequel
we prefer to work with the corresponding left action obtained by precomposing with
$f \mapsto f^{-1}$ in order to match with standard notation, compare
in particular
diagram \eqref{diagram-GL-SL} below. 

Fix a base point $s_0 \in \widetilde{X}(V)$. Choose a linear isomorphism $\IR^n
\xrightarrow{\cong} V$ which is isometric with respect to the standard inner
product on $\IR^n$ and $s_0$. It induces an isomorphism $\GL_n(\IR)
\xrightarrow{\cong} \aut(V)$ and thus a smooth left action $\rho \colon
\GL_n(\IR) \times \widetilde{X}(V) \to \widetilde{X}(V)$. 
Since for any two elements $s_1,
s_2 \in \widetilde{X}(V)$ there exists an automorphism of $V$ which is an
isometry $(V,s_1) \to (V,s_2)$, this action is transitive. The stabilizer of
$s_0 \in \widetilde{X}(V)$ is the compact subgroup $O(n) \subseteq \GL_n(\IR)$. Thus
we obtain a diffeomorphism $\widetilde{\phi} \colon
\GL_n(\IR) / O(n)
\xrightarrow{\cong} \widetilde{X}(V)$. Define smooth maps $p \colon
\widetilde{X}(V) \to \IR^{>0}, \; s \mapsto \det(s^{-1}_0 \circ s)$ and $q \colon
\GL_n(\IR) / O(n) \to \IR^{>0}, \; A \cdot O(n)  \mapsto \det(A)^2$. 
Both maps are
submersions.  In particular the preimages of $1 \in \IR^{>0}$ under $p$ and $q$
are submanifolds of codimension $1$.  Denote by $\widetilde{X}(V)_1 :=
p^{-1}(1)$ and let $i \colon \widetilde{X}(V)_1 \to \widetilde{X}(V)$ be the
inclusion.  
Set $\SL_n^{\pm}( \IR ) = \{ A
\in \GL_n ( \IR) \; | \; \det (A) =  \pm 1 \}$.
The smooth left action $\rho \colon \GL_n(\IR) \times \widetilde{X}(V) \to
\widetilde{X}(V)$ restricts to an action  $\rho_1 \colon \SL_n^{\pm}(\IR)
\times \widetilde{X}(V)_1 \to \widetilde{X}(V)_1$. Since $f^* \circ
s_0 \circ f \in \widetilde{X}(V)_1$ implies $1 = \det (s_0^{-1} \circ
f^* \circ s_0 \circ f ) =  \det(f)^2$ this action is still transitive. The stabilizer of $s_0$ is
still $O(n) \subseteq \SL_n^{\pm}(\IR)$ and thus we obtain a
diffeomorphism $\widetilde{\phi}_1 \colon \SL_n^{\pm}(\IR)/O(n) \to
\widetilde{X}(V)_1$. Note that the inclusion induces a diffeomorphism
$\SL_n ( \IR ) / SO (n) \cong \SL_n^{\pm}(\IR) / O (n)$ but we prefer
the right hand side because we are
interested in the $\GL_n ( \IZ )$-action. The inclusion $\SL_n^{\pm}(\IR) \to \GL_n(\IR)$ induces an
embedding $j \colon \SL_n^{\pm}(\IR)/O(n) \to \GL_n(\IR)/O(n)$ with image $q^{-1}(1)$.
One easily checks that the following diagram commutes
\begin{equation} \label{diagram-GL-SL}
\xymatrix{\widetilde{X}(V)_1 \ar[r]^{i} & \widetilde{X}(V) \ar[r]^{p} & \IR^{>0}
  \\
  \SL_n^{\pm}(\IR)/O(n) \ar[r]^j \ar[u]^{\widetilde{\phi}_1} & \GL_n(\IR)/O(n) \ar[r]^(0.65){q}
  \ar[u]^{\widetilde{\phi}} & \IR^{>0} \ar[u]^{\id} }
\end{equation}
Equip $\widetilde{X}(V)_1$ with the Riemannian metric $g_1$ obtained from the
Riemannian metric $g$ on $\widetilde{X}(V)$.  We conclude from
Lemma~\ref{lem:metric__aut(V)_invariant} that $g_1$ is $\SL_n^{\pm}(\IR)$-invariant.
Since $\SL_n^{\pm}(\IR)$ is a semisimple Lie group with finite center and $O(n)
\subseteq \SL_n^{\pm}(\IR)$ is a maximal compact subgroup, $\widetilde{X}(V)_1$ is a
symmetric space of non-compact type and its sectional curvature is non-positive
(see~\cite[Section~2.2 on p.~70]{Eberlein(1996)},\cite[Theorem~3.1~(ii) in~V.3 on p.~241]{Helgason(1978)}).
Alternatively~\cite[Chapter~II, Theorem~10.39 on p.~318 and
Lemma~10.52 on p.~324]{Bridson-Haefliger(1999)} show that
$\widetilde{X}(V)$ and $\widetilde{X}(V)_1$ are proper CAT(0) spaces.

We call two elements $s_1,s_2 \in \widetilde{X}(V)$ equivalent if there exists $r \in \IR^{>0}$
with $r \cdot s_1 = s_2$. Denote by $X(V)$ the set of equivalence classes under this equivalence relation.
Let $\pr \colon \widetilde{X}(V) \to X(V)$ be the projection and equip $X(V)$ with the quotient topology.
The composite $\pr \circ i \colon \widetilde{X}(V)_1 \to X(V)$ is a homeomorphism. In the sequel we equip
$X(V)$ with the structure of a Riemannian manifold for which $\pr
\circ i$ is an isometric diffeomorphism.
In particular $X(V)$ is a $\CAT(0)$-space. The $\GL_n (\IR)$-action on
$\widetilde{X}(V)$ descends to an action on $X(V)$ and
the diffeomorphism $\pr \circ i$ is $\SL_n^{\pm}(\IR)$-equivariant.


\subsection{The volume function}
\label{subsec:The_volume_function}

Fix an integer $m$ with $1 \le m \le n = \dim(V)$. In this section we investigate the following volume function.

\begin{definition}[Volume function]
  Consider $\xi \in \Lambda^mV$ with $\xi \not= 0$. Define the \emph{volume function
    associated to $\xi$} by
  \[
  \vol_\xi \colon \widetilde{X}(V) \to \IR, \quad s \mapsto
  \sqrt{(s_{\Lambda^m})(\xi,\xi)},
  \]
  i.e., the function $\vol_{\xi}$ sends an inner product $s$ on $V$ to
  the length of $\xi$ with respect to $s_{\Lambda^m}$.
\end{definition}

Fix $\xi \neq 0$ in $\Lambda^m V$ of the form $\xi =v_1 \wedge v_2 \cdots \wedge v_m$ for the
rest of this section. This means that the line $\langle \xi\rangle$ lies in the image of
the Pl\"ucker embedding, 
compare \cite[Chapter~1.5 p.~209--211]{Griffith-Harris(1978)}.  
Then there is precisely one $m$-dimensional subvector
space $V_{\xi} \subseteq V$ such that the image of the map 
$\Lambda^m V_{\xi} \to \Lambda^mV$
induced by the
inclusion is the $1$-dimensional subvector
space spanned by $\xi$.  The subspace $V_{\xi}$ is the span of the vectors $v_1, v_2, \ldots, v_m$. 
It can be expressed as $V_\xi = \{v\in V|v\wedge \xi=0\}$. This shows that
$V_\xi$ depends on the line $\langle \xi \rangle$ but is independent of the choice of $v_1,\ldots,v_m$.

Given an inner product $s$ on $V$, we obtain an
orthogonal decomposition $V = V_{\xi} \oplus V_{\xi}^{\perp}$ of $V$ with
respect to $s$ and we define $s_{\xi} \in \Sym(V)$ to be the
element which satisfies $s_{\xi}(v + v^{\perp},w + w^{\perp}) = s(v,w)$ for all
$v,w \in V_{\xi}$ and $v^{\perp},w^{\perp} \in V^{\perp}_{\xi}$.

\begin{theorem}[Gradient of the volume function]
\label{the:gradient_of_the_volume_function}
The gradient of the square $\vol_{\xi}^2 $ of the volume function
$\vol_{\xi}$ is given for $s \in \widetilde{X}(V)$ by
\[
\nabla_s(\vol_{\xi}^2) = \vol_{\xi}^2(s)  \cdot s_{\xi} \in \Sym(V) = T_s\widetilde{X}(V).
\]
\end{theorem}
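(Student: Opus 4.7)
The plan is to verify the identity via the defining relation of the gradient, namely that for all $u \in T_s \widetilde{X}(V) = \Sym(V)$,
\[
g_s\bigl(\nabla_s(\vol_{\xi}^2), u\bigr) = d(\vol_{\xi}^2)_s(u).
\]
Thus it suffices to prove
\[
\vol_{\xi}^2(s) \cdot g_s(s_{\xi}, u) = d(\vol_{\xi}^2)_s(u)
\quad \text{for every } u \in \Sym(V).
\]

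First I would compute the right hand side. Since $\xi = v_1 \wedge \cdots \wedge v_m$, formula~\eqref{Lambdams_and_det} gives
\[
\vol_{\xi}^2(s) = \det\bigl((s(v_i,v_j))_{i,j}\bigr).
\]
Write $A(s) := (s(v_i,v_j))_{i,j}$ and $B := (u(v_i,v_j))_{i,j}$, viewed as $m \times m$ matrices. A straightforward path derivative $s+tu$ applied to Jacobi's formula for the differential of the determinant yields
\[
d(\vol_{\xi}^2)_s(u) = \det(A(s)) \cdot \tr(A(s)^{-1} B) = \vol_{\xi}^2(s) \cdot \tr(A(s)^{-1} B).
\]

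Next I would evaluate the left hand side in a basis adapted to $s$ and $\xi$. Namely, choose a basis $v_1,\dots,v_m,v_{m+1},\dots,v_n$ of $V$ in which $v_1,\dots,v_m$ span $V_\xi$ and $v_{m+1},\dots,v_n$ is an $s$-orthonormal basis of $V_\xi^{\perp}$. Since $V_\xi$ and $V_\xi^{\perp}$ are $s$-orthogonal, the matrix of $s$ has the block form $\bigl(\begin{smallmatrix} A & 0 \\ 0 & I \end{smallmatrix}\bigr)$ with $A = A(s)$, and by its defining property the matrix of $s_\xi$ is $\bigl(\begin{smallmatrix} A & 0 \\ 0 & 0 \end{smallmatrix}\bigr)$. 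Writing the matrix of $u$ as $\bigl(\begin{smallmatrix} B & C \\ C^T & D \end{smallmatrix}\bigr)$, a direct block multiplication gives
\[
s^{-1} \circ u \circ s^{-1} \circ s_{\xi}
= \begin{pmatrix} A^{-1} B & 0 \\ C^T & 0 \end{pmatrix},
\]
so that $g_s(s_\xi, u) = \tr(s^{-1} \circ u \circ s^{-1} \circ s_\xi) = \tr(A^{-1}B)$.

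Combining the two computations yields the desired equality. The argument is essentially a matrix calculation; the only mild obstacle is the verification that the block decomposition of $s$ and $s_\xi$ relative to $V_\xi \oplus V_\xi^{\perp}$ makes the trace $\tr(s^{-1} u s^{-1} s_\xi)$ collapse to $\tr(A(s)^{-1} B)$, which is where the auxiliary object $s_\xi$ precisely plays its role. The statement is independent of the factorization $\xi = v_1 \wedge \cdots \wedge v_m$, as both sides depend only on $\xi$, so one does not need to worry about the choice of $v_i$.
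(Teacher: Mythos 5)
Your proposal is correct and follows essentially the same route as the paper: both characterize the gradient through $g_s(\nabla_s(\vol_{\xi}^2),u)=d(\vol_{\xi}^2)_s(u)$, compute the directional derivative via the derivative-of-determinant (Jacobi) formula to get $\det\bigl((s(v_i,v_j))\bigr)\cdot\tr\bigl((s(v_i,v_j))^{-1}(u(v_i,v_j))\bigr)$, and then use the orthogonal decomposition $V=V_{\xi}\oplus V_{\xi}^{\perp}$ to reduce $\tr(s^{-1}\circ u\circ s^{-1}\circ s_{\xi})$ to $\tr(A^{-1}B)$. The only cosmetic difference is that you carry out the trace identity with an explicit adapted basis (extending $v_1,\dots,v_m$ by an $s$-orthonormal basis of $V_{\xi}^{\perp}$) and block matrices, whereas the paper argues with the block decomposition of $s$ and $u$ as maps $V_{\xi}\oplus V_{\xi}^{\perp}\to V_{\xi}^*\oplus(V_{\xi}^{\perp})^*$ before introducing the basis of $V_{\xi}$ alone.
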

\begin{proof}
Let $A$ be any $(m,m)$-matrix. Then 
\begin{eqnarray}
\lim_{t \to 0} \frac{\det(I_m + t \cdot A) - \det(I_m)}{t}
& = & \tr(A).
\label{derivative_of_det_is_trace}
\end{eqnarray}
This follows because 
\[
\det (I_m + t \cdot A) = 1 + t \tr(A) \; \mbox{ mod } \; t^2
\]
by the Leibniz formula for the determinant.

Consider $s \in \widetilde{X}(V)$ and $u \in T_s(\widetilde{X}(V)) = \Sym(V)$. 
Notice that there exists  $\epsilon > 0$ such that $s + t \cdot u$ 
lies in $\widetilde{X}(V)$ for all $t \in (-\epsilon,\epsilon)$.  Fix $v_1, v_2, \ldots , v_m \in V$ 
with $\xi = v_1 \wedge \cdots \wedge v_m$.  We compute
using~\eqref{Lambdams_and_det} and~\eqref{derivative_of_det_is_trace}
\begin{eqnarray*}
\lefteqn{\lim_{t \to 0} \frac{\vol_{\xi}^2(s + t \cdot u) - \vol_{\xi}^2(s)}{t}}
& & 
\\
& = & 
\lim_{t \to 0} \frac{\det\bigl(((s+ t \cdot u)(v_i,v_j))_{i,j}\bigr) - \det\bigl((s(v_i,v_j))_{i,j}\bigr)}{t}
\\
& = & 
\lim_{t \to 0} \frac{\det\bigl((s(v_i,v_j))_{i,j} + t \cdot (u(v_i,v_j))_{i,j}\bigr) - \det\bigl((s(v_i,v_j))_{i,j}\bigr)}{t}
\\
& = & 
\det\bigl((s(v_i,v_j))_{i,j}\bigr) \cdot \lim_{t \to 0} 
\frac{\det \bigl(I_m + t \cdot (s(v_i,v_j))_{i,j}^{-1} \cdot (u(v_i,v_j))_{i,j}\bigr) - \det\bigl(I_m\bigr)}{t}
\\
& = & 
\det\bigl((s(v_i,v_j))_{i,j}\bigr) \cdot \tr\bigl((s(v_i,v_j))_{i,j}^{-1} \cdot (u(v_i,v_j))_{i,j}\bigr).
\end{eqnarray*}
The gradient $\nabla_s(\vol_{\xi}^2)$ is uniquely determined by the property
that for all $u \in \Sym(V)$ we have
\[
g_s\bigl(\nabla_s(\vol_{\xi}^2),u\bigr) 
= \lim_{t \to 0} \frac{\vol_{\xi}^2(s + t \cdot u) - \vol_{\xi}^2(s)}{t}.
\]
Hence it remains to show for every $u \in \Sym(V)$
\[
g_s\bigl(\vol_{\xi}^2(s)  \cdot s_{\xi},u\bigr) 
=  \det\bigl((s(v_i,v_j))_{i,j}\bigr) \cdot \tr\bigl((s(v_i,v_j))_{i,j}^{-1} \cdot (u(v_i,v_j))_{i,j}\bigr).
\]
Since $\vol_{\xi}^2(s) = \det\bigl((s(v_i,v_j))_{i,j}\bigr)$
by~\eqref{Lambdams_and_det} and $g_s(s_{\xi},u) = \tr(s^{-1} \circ s_{\xi} \circ s^{-1} \circ u)$ by definition,
it remains to show
\[
\tr(s^{-1} \circ s_{\xi} \circ s^{-1} \circ u) =  \tr\bigl((s(v_i,v_j))_{i,j}^{-1} \cdot (u(v_i,v_j))_{i,j}\bigr).
\]
We obtain a decomposition 
$s = \left( \begin{matrix} s_{\xi}  & 0 \\ 0 & 
    s_{\xi}^{\perp} \end{matrix} \right)$
for an inner
product $s_{\xi}^{\perp} \colon V_{\xi}^{\perp} \to (V_{\xi}^{\perp})^*$, where we
will here and in the sequel identify $(V_{\xi})^* \oplus (V_{\xi}^{\perp})^*$
and $(V_{\xi} \oplus V_{\xi}^{\perp})^*$ by the canonical isomorphism.  We
decompose
\[
u = \left(\begin{matrix} u_{\xi} & u' \\ u'' & u''' \end{matrix}\right) \colon V_{\xi} \oplus V_{\xi}^{\perp} \to
(V_{\xi})^* \oplus (V_{\xi}^{\perp})^*
\]
for a linear map $u_{\xi} \colon V_{\xi} \to V_{\xi}^*$. One easily checks
\[
\tr(s^{-1} \circ s_{\xi} \circ s^{-1} \circ u) = \tr(s_\xi^{-1}\circ u_\xi).
\] 
The set $\{v_1, v_2, \ldots, v_m\}$ is a basis for $V_{\xi}$. Let 
$\{v_1^*, v_2^*, \ldots, v_m^*\}$ be the dual basis of $V_{\xi}^*$. Then the matrix of
$u_{\xi}$ with respect to these basis is $(u(v_i,v_j))_{i,j}$ and the matrix of
$s_{\xi}$ with respect to these basis is $(s(v_i,v_j))_{i,j}$. Hence the matrix
of $s_{\xi}^{-1} \circ u_{\xi} \colon V_{\xi} \to V_{\xi}$ with respect to the
basis $\{v_1, v_2, \ldots v_m\}$ is $(s(v_i,v_j))_{i,j}^{-1} \cdot
(u(v_i,v_j))_{i,j}$. This implies
\[
\tr(s_{\xi}^{-1} \circ u_{\xi}) = \tr\bigl((s(v_i,v_j))_{i,j}^{-1} \cdot (u(v_i,v_j))_{i,j}\bigr).
\]
This finishes the proof of Theorem~\ref{the:gradient_of_the_volume_function}.
\end{proof}

\begin{corollary}\label{cor:gradient_of_log(vol2)}
  The gradient of the function
  \[\ln \circ \vol_{\xi}^2 \colon \widetilde{X}(V) \to \IR\]
  at $s \in \widetilde{X}(V)$ is given by the tangent vector $s_{\xi} \in
  T_s\widetilde{X}(V) = \Sym(V)$. In particular its norm with respect to the
  Riemannian metric $g$ on $\widetilde{X}(V)$ is independent of $s$, namely,
  $\sqrt{m}$.
\end{corollary}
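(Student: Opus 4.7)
The plan is to derive the first statement from Theorem~\ref{the:gradient_of_the_volume_function} via the chain rule for gradients, and then compute the norm directly using the block-diagonal description of $s_\xi$ relative to the orthogonal splitting $V = V_\xi \oplus V_\xi^\perp$.

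First I would recall that for any smooth positive function $f \colon \widetilde{X}(V) \to \IR^{>0}$ on a Riemannian manifold one has $\nabla(\ln f) = f^{-1} \cdot \nabla f$. Applied to $f = \vol_\xi^2$, which is strictly positive on $\widetilde{X}(V)$ since $\xi \neq 0$ and $s_{\Lambda^m}$ is an inner product, Theorem~\ref{the:gradient_of_the_volume_function} yields
\[
\nabla_s(\ln \circ \vol_\xi^2) \;=\; \frac{1}{\vol_\xi^2(s)} \cdot \nabla_s(\vol_\xi^2) \;=\; \frac{1}{\vol_\xi^2(s)} \cdot \vol_\xi^2(s) \cdot s_\xi \;=\; s_\xi,
\]
which is the first assertion.

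For the norm, I would use the defining formula $g_s(u,v) = \tr(s^{-1} \circ v \circ s^{-1} \circ u)$ with $u = v = s_\xi$. With respect to the $s$-orthogonal decomposition $V = V_\xi \oplus V_\xi^\perp$, the element $s \in \Sym(V)$ has the block form $s = \begin{pmatrix} s|_{V_\xi} & 0 \\ 0 & s|_{V_\xi^\perp} \end{pmatrix}$, while the definition of $s_\xi$ gives
\[
s_\xi \;=\; \begin{pmatrix} s|_{V_\xi} & 0 \\ 0 & 0 \end{pmatrix}
\]
in the same decomposition (identifying $(V_\xi)^* \oplus (V_\xi^\perp)^*$ with $V^*$ as in the proof of Theorem~\ref{the:gradient_of_the_volume_function}). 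Therefore
\[
s^{-1} \circ s_\xi \;=\; \begin{pmatrix} \id_{V_\xi} & 0 \\ 0 & 0 \end{pmatrix},
\]
so $s^{-1} \circ s_\xi \circ s^{-1} \circ s_\xi$ is the projection onto $V_\xi$ along $V_\xi^\perp$ and has trace $\dim V_\xi = m$. Hence $g_s(s_\xi, s_\xi) = m$ and $\|\nabla_s(\ln \circ \vol_\xi^2)\|_g = \sqrt{m}$, independently of $s$.

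There is no real obstacle here; the only point to be mindful of is keeping the two roles of $s_\xi$ straight (as an element of $\Sym(V)$ that vanishes on the $V_\xi^\perp$-block versus the restriction $s|_{V_\xi} \in \Sym(V_\xi)$ which is invertible) so that the block-matrix computation of $s^{-1} \circ s_\xi$ is correctly carried out on the $V_\xi^\perp$-summand, where $s_\xi$ is the zero map rather than $s|_{V_\xi^\perp}$.
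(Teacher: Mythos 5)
Your proposal is correct and follows the same route as the paper: the chain rule applied to Theorem~\ref{the:gradient_of_the_volume_function} gives $\nabla_s(\ln\circ\vol_{\xi}^2)=s_{\xi}$, and the norm is computed exactly as in the paper by observing that $s^{-1}\circ s_{\xi}\circ s^{-1}\circ s_{\xi}$ is $\id_{V_{\xi}}\oplus 0$ with respect to the $s$-orthogonal splitting $V=V_{\xi}\oplus V_{\xi}^{\perp}$, so its trace is $m$. Your cautionary remark about the two roles of $s_{\xi}$ is exactly the point the paper's block computation also relies on.
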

\begin{proof}
  Since the derivative of $\ln(x)$ is $x^{-1}$, the chain rule implies together
  with Theorem~\ref{the:gradient_of_the_volume_function} for $s \in
  \widetilde{X}(V)$
  \begin{equation*}
    \nabla_s\bigl(\ln \circ \vol_{\xi}^2\bigr) 
    =  
    \nabla_s\bigl(\vol(\xi)^2\bigr)\cdot \frac{1}{\vol_{\xi}^2(s)}     
     =  
    \vol_{\xi}^2(s)  \cdot s_{\xi} \cdot \frac{1}{\vol_{\xi}^2(s)} 
     =  
    s_{\xi}.
  \end{equation*}
  We use the orthogonal decomposition $V=V_\xi\oplus V_\xi^\perp$ with respect to $s$ to compute 
  \[ 
  \begin{split}
    \left(  ||\nabla_s(\ln \circ  \vol_{\xi}^2)||_s\right)^2 &
      \; =  \; 
    \left(||s_{\xi}||_s\right)^2    
     \; =  \; 
    g_s\bigl(s_{\xi},s_{\xi}\bigr) 
      \; =  \; 
    \tr\bigl(s^{-1} \circ s_{\xi} \circ s^{-1} \circ s_{\xi}\bigr)
    \\ &
       \; =  \;   
    \tr\bigl(\id_{V_{\xi}} \oplus \,0 \colon V_{\xi} \oplus V_{\xi}^{\perp} 
               \to V_{\xi} \oplus V_{\xi}^{\perp}\bigr)
      \; =  \;  
    \dim(V_{\xi})  \; =  \;  m.
  \end{split} 
  \]
\end{proof}


\typeout{------------------------  Section 2:  Sublattices of Z^n --------------}

\section{Sublattices of \texorpdfstring{$\IZ^n$}{Zn}}
\label{sec:Sublattices_of_Zn}

A \emph{sublattice} $W$ of $\IZ^n$ is a $\IZ$-submodule $W \subseteq \IZ^n$ such
that $\IZ^n/W$ is a projective $\IZ$-module. Equivalently, $W$ is a
$\IZ$-submodule $W \subseteq \IZ^n$ such that for $x \in \IZ^n$ for which $k
\cdot x$ belongs to $W$ for some $k \in \IZ, k \not= 0$ we have $x \in W$.  Let
$\call$ be the set of sublattices $L$ of $\IZ^n$.  

Consider $W \in \call$. Let $m$ be its rank as an abelian group. Let
$\Lambda_{\IZ}^m W \to \Lambda_{\IZ}^m\IZ^n \to \Lambda^m\IR^n$ be the obvious
map. Let $\xi(W) \in \Lambda^m\IR^n$ be the image of a generator of the infinite
cyclic group $\Lambda_{\IZ}^m W$.  We have defined the map $\vol_{\xi(W)} \colon
\widetilde{X}(\IR^n) \to \IR$ above. Obviously it does not change if we replace
$\xi(W)$ by $- \xi(W)$.  Hence it depends only on $W$ and not on the choice of
generator $\xi(W)$. Notice that for $W = 0$ we have by definition
$\Lambda^0_{\IZ} W = \IZ$ and $\Lambda^0 \IR^n = \IR$ and $\xi(W)$ is $\pm
1 \in \IR$. In that case $\vol_{\xi}$ is the constant
function with value $1$.

We will abbreviate
\begin{equation*}
  \widetilde{X} =  \widetilde{X}(\IR^n), \;
  \widetilde{X}_1 =  \widetilde{X}(\IR^n)_1, \;
  X  =  X(\IR^n) \; \text{and} \;
  \vol_W  =  \vol_{\xi(W)} \; \text{for}\; W \in \call.
\end{equation*}
Given a chain   $W_0 \subsetneq W_1$ of elements $W_0, W_1 \in \call$,  we define a function
\begin{eqnarray*}
\widetilde{c}_{W_0 \subsetneq W_1} \colon \widetilde{X} \to \IR
\end{eqnarray*}
by
\[\widetilde{c}_{W_0 \subsetneq W_1}(s) 
:= \frac{\ln\bigl(\vol_{W_1}(s)\bigr) - \ln\bigl(\vol_{W_0}(s)\bigr)}{\rk_{\IZ}(W_1) - \rk_{\IZ}(W_0)}.
\]
Obviously this can be rewritten as
\begin{eqnarray*}
\widetilde{c}_{W_0 \subsetneq W_1}(s) 
= \frac{1}{2} \cdot \left(\frac{\ln\bigl(\vol_{W_1}(s)^2\bigr) 
- \ln\bigl(\vol_{W_0}(s)^2\bigr)}{\rk_{\IZ}(W_1) - \rk_{\IZ}(W_0)} \right).
\end{eqnarray*}
Hence we get for the norm of  the gradient of this function at $s \in \widetilde{X}$
\begin{eqnarray*}
\lefteqn{\left|\left|\nabla_s \bigl(\widetilde{c}_{W_0\subsetneq W_1}\bigr)\right|\right|}
& & 
\\
& = & 
\left|\left|\frac{1}{2} \cdot \left(\frac{\nabla_s\bigl(\ln \circ \vol_{W_1}^2\bigr) 
- \nabla_s\bigl(\ln \circ \vol_{W_0}^2\bigr)}{\rk_{\IZ}(W_1) - \rk_{\IZ}(W_0)} \right)\right|\right|
\\
& \le &
\frac{1}{2} \cdot \left(\frac{\left|\left|\nabla_s\bigl(\ln \circ \vol_{W_1}^2\bigr)\right|\right| 
+ \left|\left|\nabla_s\bigl(\ln \circ \vol_{W_0}^2\bigr)\right|\right| }{\rk_{\IZ}(W_1) - \rk_{\IZ}(W_0)}\right)
\\
& \le & 
\frac{1}{2} \cdot \left( \left|\left|\nabla_s\bigl(\ln \circ \vol_{W_1}^2\bigr)\right|\right| 
+ \left|\left|\nabla_s\bigl(\ln \circ \vol_{W_0}^2\bigr)\right|\right| \right).
\end{eqnarray*}
We conclude from Corollary~\ref{cor:gradient_of_log(vol2)} for all $s \in \widetilde{X}$.
\[
\left|\left|\nabla_s \bigl(\widetilde{c}_{W_0\subsetneq W_1}\bigr)\right|\right|
\le
\frac{\sqrt{\rk_{\IZ}(W_1)}  + \sqrt{\rk_{\IZ}(W_0)}}{2}
\le 
\sqrt{n}.
\]
If $f \colon M \to \IR$ is a differentiable function on a Riemannian
manifold $M$ and $C = \sup \{ || \nabla_x f || \; | \; x \in M \}$ we always have
\[
|f(x_1) - f(x_2) | \leq C d_M(x_1 , x_2),
\]
where $d_M$ denotes the metric associated to the Riemannian metric.
In particular we
get for any two elements $s_0,s_1 \in \widetilde{X}$
\begin{eqnarray*}
\left|\widetilde{c}_{W_0 \subsetneq  W_1}(s_1) - \widetilde{c}_{W_0 \subsetneq  W_1}(s_0)\right|
& \le &
\sqrt{n} \cdot d_{\widetilde{X}}(s_0,s_1),
\end{eqnarray*}
where $ d_{\widetilde{X}}$ is the metric on $\widetilde{X}$ coming from the Riemannian
metric $g$ on $\widetilde{X}$.  Recall that the Riemannian metric $g_1$
on $\widetilde{X}_1$ is obtained by restricting the metric $g$.  Let
$d_{\widetilde{X}_1}$ be the metric on $\widetilde{X}_1$ coming
from the Riemannian metric $g_1$ on $\widetilde{X}_1$.  Recall that 
$i\colon \widetilde{X}_1 \to \widetilde{X}$ is the inclusion.  Then
we get for $s_0,s_1 \in \widetilde{X}_1$
\begin{eqnarray*}
d_{\widetilde{X}}(i(s_0),i(s_1)) \le d_{\widetilde{X}_1}(s_0,s_1).
\end{eqnarray*}
Indeed $\widetilde{X}_1$ is a geodesic submanifold of $\tilde{X}$
(\cite[Chapter~II, Lemma~10.52 on p.~324]{Bridson-Haefliger(1999)}). 
So both sides are even equal. We obtain for
$s_0,s_1 \in \widetilde{X}_1$
\begin{eqnarray}
\left|\widetilde{c}_{W_0 \subsetneq  W_1} \circ i(s_1) - \widetilde{c}_{W_0 \subsetneq  W_1} \circ i(s_0)\right|
& \le &
\sqrt{n} \cdot d_{\widetilde{X}_1}(s_0,s_1).
\label{inequality_for_tildec_W_0-W_1}
\end{eqnarray}
Put
\begin{eqnarray*}
\call' & = & \{W \in \call \mid W \not= 0, W \not=\IZ^n\}.
\end{eqnarray*}
Define for $W \in \call'$  functions
\[
\widetilde{c}_W^i, \widetilde{c}_W^s \colon \widetilde{X} \to \IR
\]
by
\begin{eqnarray*}
\widetilde{c}_W^i(x) &:= & \inf\bigl\{\widetilde{c}_{W \subsetneq W_2}(x) \mid W_2 \in  \call, W \subsetneq W_2\bigr\};
\\
\widetilde{c}_W^s(x) &:= & \sup\bigl\{\widetilde{c}_{W_0 \subsetneq W}(x) \mid W_0 \in  \call, W_0 \subsetneq W\bigr\}.
\end{eqnarray*}
In order to see that infimum and supremum exist we use the fact that
for fixed $x \in \widetilde{X}$ there are at most finitely many $W \in
\call$ with $\vol_W (x) \leq 1$, compare \cite[Lemma~1.15]{Grayson(1984)}.
Put
\begin{eqnarray}
\widetilde{d}_W \colon \widetilde{X} \to \IR, & & x \mapsto \exp \bigl(\widetilde{c}_W^i(x) - \widetilde{c}_W^s(x)\bigr).
\label{d_W}
\end{eqnarray}
Since $\vol_{W}(r \cdot s) = r^{\rk_{\IZ}(W)} \cdot \vol_{W }(s)$
 holds for $r \in \IR^{>0}$, $W \in \call$ and $s \in \widetilde{X}$,
 we have $\widetilde{c}_{W_0 \subsetneq W_1} (r \cdot s ) =
 \widetilde{c}_{W_0 \subsetneq W_1} (s) + \ln (r)$ and  the
function $\widetilde{d}_W$ factorizes over the
projection $\pr \colon \widetilde{X} \to X$ to a function
\begin{eqnarray*}
d_W \colon X  \to  \IR.
\end{eqnarray*}

\begin{lemma}
  \label{lem:estimate_for_d_W}
  Consider $x \in X$, $W \in \call'$, and $\alpha\in \IR^{>0}$. Then we
  get for all $y \in X$ with $d_{X}(x,y) \le \alpha$
  \[
  d_W(y) \in [d_W(x)/e^{2\sqrt{n}\alpha},d_W(x) \cdot e^{2\sqrt{n}\alpha}].
  \]
\end{lemma}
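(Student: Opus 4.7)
The plan is to lift to $\widetilde{X}_1$ via the isometric homeomorphism $\pr \circ i \colon \widetilde{X}_1 \to X$, exploit the Lipschitz bound~\eqref{inequality_for_tildec_W_0-W_1} uniformly in the chain $W_0 \subsetneq W_1$, and show this uniform bound survives after taking infimum over $W_2 \supsetneq W$ and supremum over $W_0 \subsetneq W$. Exponentiating the resulting bound on $\widetilde{c}_W^i - \widetilde{c}_W^s$ will yield the claim.

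First, let $\tilde{x}, \tilde{y} \in \widetilde{X}_1$ be the preimages of $x, y$ under $\pr \circ i$. Since $\pr \circ i$ is an isometric diffeomorphism, $d_{\widetilde{X}_1}(\tilde{x}, \tilde{y}) = d_X(x,y) \le \alpha$. By~\eqref{inequality_for_tildec_W_0-W_1}, for every chain $W_0 \subsetneq W_1$ of sublattices we have
\[
\bigl|\widetilde{c}_{W_0 \subsetneq W_1}(i(\tilde{y})) - \widetilde{c}_{W_0 \subsetneq W_1}(i(\tilde{x}))\bigr| \le \sqrt{n}\,\alpha.
\]
Next I would propagate this bound through infimum and supremum. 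For any $W_2 \in \call$ with $W \subsetneq W_2$,
\[
\widetilde{c}_W^i(i(\tilde{y})) \le \widetilde{c}_{W \subsetneq W_2}(i(\tilde{y})) \le \widetilde{c}_{W \subsetneq W_2}(i(\tilde{x})) + \sqrt{n}\,\alpha,
\]
and taking the infimum over $W_2$ on the right gives $\widetilde{c}_W^i(i(\tilde{y})) \le \widetilde{c}_W^i(i(\tilde{x})) + \sqrt{n}\,\alpha$. Swapping the roles of $\tilde{x}$ and $\tilde{y}$, we obtain $|\widetilde{c}_W^i(i(\tilde{y})) - \widetilde{c}_W^i(i(\tilde{x}))| \le \sqrt{n}\,\alpha$, and by the same argument applied to the supremum, $|\widetilde{c}_W^s(i(\tilde{y})) - \widetilde{c}_W^s(i(\tilde{x}))| \le \sqrt{n}\,\alpha$. (The author's remark that for fixed $x$ only finitely many sublattices $W$ satisfy $\vol_W(x) \le 1$, via Grayson's Lemma~1.15, guarantees that the infimum and supremum are attained and finite, so these manipulations are legitimate.)

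Combining the two bounds by the triangle inequality,
\[
\bigl| \bigl(\widetilde{c}_W^i(i(\tilde{y})) - \widetilde{c}_W^s(i(\tilde{y}))\bigr) - \bigl(\widetilde{c}_W^i(i(\tilde{x})) - \widetilde{c}_W^s(i(\tilde{x}))\bigr) \bigr| \le 2\sqrt{n}\,\alpha.
\]
By the definition~\eqref{d_W} of $\widetilde{d}_W$ and its factorization through $\pr$, this says
\[
\bigl| \ln d_W(y) - \ln d_W(x) \bigr| \le 2\sqrt{n}\,\alpha,
\]
and exponentiating gives the desired two-sided bound $d_W(y) \in [d_W(x)/e^{2\sqrt{n}\alpha}, d_W(x) \cdot e^{2\sqrt{n}\alpha}]$.

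The only subtle point is the infimum/supremum step: one needs to know that the sup and inf over infinite families of sublattices do not destroy the uniform Lipschitz constant. As described above this is formally routine once the attainment/finiteness (from Grayson's finiteness statement) is in hand, so I do not expect a genuine obstacle here; the bulk of the work was already done in~\eqref{inequality_for_tildec_W_0-W_1} using Corollary~\ref{cor:gradient_of_log(vol2)}.
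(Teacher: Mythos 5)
Your proposal is correct and follows essentially the same route as the paper: reduce to $\widetilde{X}_1$ via the isometric identification, propagate the Lipschitz estimate~\eqref{inequality_for_tildec_W_0-W_1} through the infimum and supremum defining $\widetilde{c}_W^i$ and $\widetilde{c}_W^s$, and exponentiate. The paper's proof is just a terser version of the same argument (and, as you note, the finiteness from Grayson's Lemma~1.15 is what makes the inf/sup well defined; attainment is not actually needed).
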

\begin{proof}
By the definition of the structure of a smooth Riemannian manifold on $X$,
it suffices to show for all $s_0,s_1 \in \widetilde{X}_1$ with
$d_{\widetilde{X}_1}(s_0 ,s_1 ) \le \alpha$
\[
 \widetilde{d}_W(s_1) \in [\widetilde{d}_W(s_0)/e^{2\sqrt{n}\alpha},\widetilde{d}_W(s_0) \cdot e^{2\sqrt{n}\alpha}].
  \]
One concludes from~\eqref{inequality_for_tildec_W_0-W_1} for $s_0,s_1 \in \widetilde{X}_1$ with $d_{\widetilde{X}_1}(s_0,s_1) \le \alpha$
and $W \in\call'$
\begin{eqnarray*}
\widetilde{c}_W^s(s_1) & \in & [\widetilde{c}_W^s(s_0)- \sqrt{n} \cdot \alpha,\widetilde{c}_W^s(s_0) + \sqrt{n} \cdot \alpha],
\\
\widetilde{c}_W^i(s_1) & \in & [\widetilde{c}_W^i(s_0)- \sqrt{n} \cdot \alpha,\widetilde{c}_W^i(s_0) + \sqrt{n} \cdot \alpha].
\end{eqnarray*}
Now the claim follows.
\end{proof}
In particular the function $d_W \colon X \to \IR$ is continuous.

Define the following open subset of $X$ for $W \in \call'$ and $t \ge 1$
\begin{eqnarray*}
  X(W,t) & := & \{x \in X \mid d_W(x) > t\}.
\end{eqnarray*}
There is an obvious $\GL_n(\IZ)$-action on $\call$ and $\call'$ and
in the discussion following Lemma~\ref{lem:metric__aut(V)_invariant} we have
already explained how $\GL_n(\IZ) \subset \SL^{\pm}_n (\IR)$ acts on $\widetilde{X}_1$ and hence on $X$ (choosing the
standard inner product on $\IR^n$ as the basepoint $s_0$).

\begin{lemma} For any $t\ge 1$ we get:
  \label{lem:Grayson}\
  \begin{enumerate}

  \item \label{lem:Grayson:G-invariance} $X(gW,t) = gX(W,t)$ for $g \in
    \GL_n(\IZ)$, $ W \in \call'$;
  \item \label{lem:Grayson:G-compact} The complement of the
    $\GL_n(\IZ)$-invariant open subset 
    \[
     |\calw(t)| := \bigcup_{W\in \call'} X(W,t)
    \]
    in
    $X$ is a cocompact $\GL_n(\IZ)$-set;

  \item \label{lem:Grayson:intersection} If $X(W_1,t) \cap X(W_2,t)
    \cap \cdots \cap 
    X(W_k,t) \not= \emptyset$ for $W_i \in \call'$, then we can find a permutation $\sigma \in
    \Sigma_k$ such that $W_{\sigma(1)} \subseteq W_{\sigma(2)} \subseteq \ldots
    \subseteq W_{\sigma(k)}$ holds.

  \end{enumerate}
\end{lemma}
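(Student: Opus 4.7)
The plan is to handle (i) by direct equivariance and to deduce (ii) and (iii) from Grayson's reduction theory \cite{Grayson(1984)}, whose setup is tailored to the functions $\vol_W$ and $d_W$ introduced above.

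For (i), I would check $\GL_n(\IZ)$-equivariance in stages. The key point is that for $g \in \GL_n(\IR)$ acting on $\widetilde{X}$ on the left, the map $\Lambda^m g$ sends a generator of $\Lambda^m_{\IZ} W$ to a generator of $\Lambda^m_{\IZ}(gW)$ and intertwines $s_{\Lambda^m}$ with $(g \cdot s)_{\Lambda^m}$; consequently $\vol_{gW}(g \cdot s) = \vol_W(s)$. Because $\rk_{\IZ}(gW) = \rk_{\IZ}(W)$, this equivariance passes to $\widetilde{c}_{W_0 \subsetneq W_1}$. Because $W_2 \mapsto gW_2$ bijects sublattices properly containing $W$ with those properly containing $gW$, and analogously for sublattices contained in $W$, equivariance passes to $\widetilde{c}_W^i$, $\widetilde{c}_W^s$, hence to $\widetilde{d}_W$ and $d_W$. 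Thus $d_{gW}(g \cdot x) = d_W(x)$, and $X(gW,t) = g \cdot X(W,t)$.

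For (iii), I would invoke Grayson's canonical filtration. For a fixed $x \in X$, lifted arbitrarily to $\widetilde{X}_1$, consider the points $P_W := (\rk_{\IZ} W, \ln \vol_W(x))$ for $W \in \call$. By \cite[Lemma~1.15]{Grayson(1984)} only finitely many $W$ have $P_W$ below any given horizontal line, so the upper convex hull of $\{P_W\}$ is a polygon with finitely many vertices. Grayson shows these vertices form a chain $0 = W_0 \subsetneq W_1 \subsetneq \dots \subsetneq W_r = \IZ^n$, the \emph{canonical filtration} of $x$. Unwinding the definitions, the inequality $d_W(x) > 1$ is equivalent to $\widetilde{c}_W^i(x) > \widetilde{c}_W^s(x)$, which is exactly the statement that $W$ is a vertex of this polygon. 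Hence any $W_i \in \call'$ satisfying $d_{W_i}(x) > t \geq 1$ belongs to the canonical filtration of $x$, so $\{W_1,\dots,W_k\}$ is totally ordered by inclusion.

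For (ii), I would again appeal to Grayson. The complement of $|\calw(t)|$ consists of those $x \in X$ for which $d_W(x) \leq t$ for every $W \in \call'$: those lattices whose canonical polygon stays within bounded deviation from a straight line. For $t = 1$ this is the semistable locus, whose quotient by $\GL_n(\IZ)$ is compact by Grayson's reduction theorem, and the same argument handles general $t \geq 1$. I expect the main obstacle to lie here: translating Grayson's results, phrased in terms of lattices and polygons, cleanly into the symmetric-space language used in this paper, and verifying that the cocompactness assertion applies verbatim to $X \setminus |\calw(t)|$ for all $t \geq 1$.
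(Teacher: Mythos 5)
Your proposal is correct and takes essentially the same route as the paper: translate everything into Grayson's lattice language, identify the function $d_W$ with the slope data of Grayson's canonical plot/polygon, and quote his reduction theory (the chain property for subspaces with $d_W>1$, and cocompactness of $\{x \mid d_W(x)\le t \text{ for all } W\}$) for assertions (ii) and (iii), with (i) following from equivariance of $\vol_W$ --- which you verify directly while the paper subsumes it into the same citation. Be aware that the step you call ``unwinding the definitions'' (namely that $\widetilde{c}^s_W(s)=\max\bigl((\IZ^n,s)\cap W\bigr)$ and $\widetilde{c}^i_W(s)=\min\bigl((\IZ^n,s)/(\IZ^n,s)\cap W\bigr)$, proved via Grayson's Lemma~1.8 and Corollary~1.30) is precisely where the paper's written proof does its work, and that the canonical polygon is the \emph{lower}, not upper, boundary of the convex hull of the plot points.
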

\begin{proof}
  This follows directly from Grayson~\cite[Lemma~2.1, Cor.~5.2]{Grayson(1984)} 
  as soon as
  we have explained how our setup corresponds to the one of Grayson.

We are only dealing with the case $\calo = \IZ$ and $F = \IQ$
of~\cite{Grayson(1984)}.  In particular there is only one archimedian place,
namely, the absolute value on $\IQ$ and for it $\IQ_{\infty} = \IR$. So an
element $s \in \widetilde{X}$ corresponds to the structure of a lattice
which we will denote by $(\IZ^n,s)$ with
underlying $\IZ$-module $\IZ^n$ in the sense of~\cite{Grayson(1984)}.  Given $s
\in \widetilde{X}$,  an element $W \in \call$ defines a sublattice  of the lattice $(\IZ^n,s)$ in the sense
of~\cite{Grayson(1984)} which we will denote by $(\IZ^n,s) \cap W$. The volume of a
sublattice $(\IZ^n,s) \cap W$ of $(\IZ^n,s)$ in the sense of~\cite{Grayson(1984)} is
$\vol_{W}(s)$.

Given $W \in \call'$, we obtain a $\IQ$-subspace in the sense
of~\cite[Definition~2.1]{Grayson(1984)} which we denote again by $W$, and vice
versa. It remains to explain why our function $d_W$ of~\eqref{d_W} agrees with
the function $d_W$ of~\cite[Definition~2.1]{Grayson(1984)} which is given by
\[
d_W(s) = \exp\biggl(\min\bigl((\IZ^n,s)/(\IZ^n,s) \cap W\bigr)  - \max \bigl((\IZ^n,s) \cap W\bigr)\biggr),
\]
see \cite[Definition~1.23, 1.9]{Grayson(1984)}.
This holds by the following observation.

Consider $s \in \widetilde{X}$ and $W \in \call'$. Consider the canonical plot and the
canonical polygon of the lattice $(\IZ^n,s) \cap W$ in the sense
of~\cite[Definition~1.10 and Discussion~1.16]{Grayson(1984)}. 
The slopes of the canonical polygon are strictly increasing when going from the left to the right because
of~\cite[Corollary~1.30]{Grayson(1984)}. Hence $\max \bigl((\IZ^n,s) \cap W\bigr)$ in the sense
of~\cite[Definition~1.23]{Grayson(1984)} is the slope of the segment of the
canonical polygon ending at $(\IZ^n,s) \cap W$.  Consider any $W_0 \in \call$ with $W_0 \subsetneq W$.
Obviously the slope of the line joining the plot point of $(\IZ^n,s) \cap W_0$
and $(\IZ^n,s) \cap W$ is less than or equal to the slope of the segment of the canonical polygon
ending at $(\IZ^n,s) \cap W$. If $(\IZ^n,s) \cap W_0$ happens to be the starting point of this segment, then
this slope agrees with the slope of the segment of the canonical plot ending at
$(\IZ^n,s) \cap W$. Hence
\begin{multline}
\max \bigl((\IZ^n,s) \cap W\bigr)
\\
=  \max \left\{\left.\frac{\ln(\vol_{W}(s)) - \ln(\vol_{W_0}(s))}{\rk_{\IZ}(W) - \rk_{\IZ}(W_0)}
\; \right|\; W_0 \in \call, W_0 \subsetneq W\right\} = \widetilde{c}_W^s(s).
\label{lem:Grayson:(s)}
\end{multline}
We have the formula 
\[
\vol\bigl((\IZ^n,s) \cap W_2\bigr) = \vol\bigl((\IZ^n,s) \cap W\bigr) \cdot \vol\bigl((\IZ^n,s) \cap W_2/(\IZ^n,s) \cap W\bigr)
\]
for any $W_2 \in \call$  with $W \subsetneq W_2$ (see~\cite[Lemma~1.8]{Grayson(1984)}). Hence
\begin{eqnarray*}
\frac{\ln(\vol_{W_2}(s)) - \ln(\vol_{W}(s))}{\rk_{\IZ}(W_2) - \rk_{\IZ}(W)}
& = & 
\frac{\ln(\vol_{W_2/W}(s))}{\rk_{\IZ}(W_2/W)}.
\end{eqnarray*} 
There is an obvious bijection of the set of direct summands
in $\IZ^n/W$ and the set of direct summand in $\IZ^n$ containing $W$. 
Now analogously to the proof of~\eqref{lem:Grayson:(s)} one shows
\begin{multline}
\min\bigl((\IZ^n,s)/((\IZ^n,s) \cap W)\bigr)
\\
 =  \min \left\{\left.\frac{\ln(\vol_{W_2}(s)) 
 -\ln(\vol_{W}(s))}{\rk_{\IZ}(W_2) - \rk_{\IZ}(W)}\; \right|\; W_2 \in \call, W \subsetneq W_2\right\} = \widetilde{c}_W^i(s).
\label{lem:Grayson:(i)}
\end{multline}
Now the equality of the two versions for $d_W$ follows
from~\eqref{lem:Grayson:(s)} and~\eqref{lem:Grayson:(i)}. This finishes the poof
of Lemma~\ref{lem:Grayson}.
\end{proof}


\section{Transfer Reducibility of \texorpdfstring{$\GL_n(\IZ)$}{GLn(Z)}}
   \label{sec:Transfer_Reducibility_of_SL_n(IZ)}

  Let $\calf_n$ be the family of those subgroups $H$ of
  $\GL_n(\IZ)$, which are virtually cyclic or for which there 
  exists a finitely generated free abelian group $P$, natural
  numbers $r$ and $n_i$ with $n_i < n$ and an extension of
  groups 
  \[
  1 \to P \to K \to \prod_{i = 1} ^{r} \GL_{n_i}(\IZ) \to 1
  \] 
  such that $H$ is isomorphic to a subgroup of $K$. 

  In this section we prove the following theorem, which 
  by~\cite[Theorem~1.1]{Bartels-Lueck(2012annals)} implies the 
  $K$-theoretic FJC up to dimension $1$ for $\GL_n (\IZ)$ 
  with respect to the family $\calf_n$.
  The notion of \emph{transfer reducibility} has been 
  introduced in~\cite[Definition~1.8]{Bartels-Lueck(2012annals)}. 
  Transfer reducibility asserts the existence of a compact space $Z$ and certain
  equivariant covers of $G \x Z$.
  A slight
  modification of transfer reducibility is discussed in
  Section~\ref{sec:wreath-product_and_transfer-reducibility}, see Definition~\ref{def:almost_transfer_reducible}.

Here our main work is to verify the conditions formulated in
Definition~\ref{def:at-infinity_plus_periods} for our situation, see Lemma~\ref{lem:long_coverings}.
Once this has been done the verification of transfer reducibility
proceeds as in \cite{Bartels-Lueck(2010CAT(0)flow)}, as is explained after Lemma~\ref{lem:long_coverings}.

  \begin{theorem} \label{the:transfer-reducible}
    The group $\GL_n(\IZ)$ is transfer reducible over $\calf_n$.
  \end{theorem}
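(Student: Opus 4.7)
The strategy is to adapt the proof of transfer reducibility for $\CAT(0)$-groups from~\cite{Bartels-Lueck(2010CAT(0)flow)}. Although $\GL_n(\IZ)$ acts properly and isometrically on the $\CAT(0)$-space $X = X(\IR^n)$ analyzed in Section~\ref{sec:The_space_of_inner_products_and_the_volume_function}, the action is not cocompact, so that argument cannot be applied verbatim. The plan is to form the $\CAT(0)$-flow space $\FS(X)$ and to construct long $\calf_n$-covers at infinity in the sense of Definition~\ref{def:at-infinity_plus_periods}. Once such covers are in hand, the conclusion of transfer reducibility follows from the machinery of~\cite{Bartels-Lueck(2010CAT(0)flow)}, exactly as indicated in the paragraph preceding the theorem.

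The covers at infinity will be built from the open sets $X(W,t) \subseteq X$ of Section~\ref{sec:Sublattices_of_Zn}. By Lemma~\ref{lem:Grayson}(ii) their union $|\calw(t)|$ has cocompact complement, so the cocompact piece of $\FS(X)$ is handled by the standard $\CAT(0)$-construction. On the non-cocompact piece the crucial combinatorial input is Lemma~\ref{lem:Grayson}(iii): whenever $X(W_1,t), \ldots, X(W_k,t)$ meet, the $W_i$ form a chain in $\call'$ after reordering. Hence the $\GL_n(\IZ)$-stabilizer of such an intersection is contained in the stabilizer of $W_1 \in \call'$, which fits into a short exact sequence
\[
1 \longrightarrow \mathrm{Hom}(\IZ^n/W_1, W_1) \longrightarrow \mathrm{Stab}(W_1) \longrightarrow \GL(W_1) \times \GL(\IZ^n/W_1) \longrightarrow 1,
\]
with finitely generated free abelian kernel and quotient of the form $\GL_{n_1}(\IZ) \times \GL_{n_2}(\IZ)$ where $n_1, n_2 < n$. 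Every such stabilizer, and any subgroup thereof, therefore lies in $\calf_n$, and the covering dimension is bounded in terms of $n$ via the chain condition of Lemma~\ref{lem:Grayson}(iii).

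The main technical task --- and what I expect to be the principal obstacle --- is to verify the \emph{long} condition in Definition~\ref{def:at-infinity_plus_periods}: for every prescribed $\alpha > 0$, any metric ball of radius $\alpha$ in the appropriate region of $\FS(X)$ must lie in a single element of the cover. Here Lemma~\ref{lem:estimate_for_d_W} is decisive, since it shows that $\ln d_W$ is globally $2\sqrt{n}$-Lipschitz on $X$; consequently, if $x \in X(W, t \cdot e^{2\sqrt{n}\alpha})$ then the entire closed $\alpha$-ball around $x$ lies in $X(W, t)$. Taking thresholds of the form $t_\alpha = t_0 \cdot e^{2\sqrt{n}\alpha}$ yields a nested family of covers of $|\calw(t_\alpha)|$ with the required long property on $X$. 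Lifting this from $X$ to the flow space $\FS(X)$ and matching it with the cocompact $\CAT(0)$-cover is a routine variant of the construction in~\cite{Bartels-Lueck(2010CAT(0)flow), Bartels-Lueck(2012annals)}, together with the period control built into Definition~\ref{def:at-infinity_plus_periods}.

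With long $\calf_n$-covers at infinity established, the final step assembles transfer reducibility as in~\cite{Bartels-Lueck(2010CAT(0)flow)}: one takes $Z$ to be a suitable simplicial thickening of a compact portion of $\FS(X)$, uses contractibility of $X$ and properness of the action to exhibit the transfer, and patches an open $\GL_n(\IZ)$-equivariant $\calf_n$-cover of $\GL_n(\IZ) \times Z$ out of the cocompact $\CAT(0)$-cover and the covers at infinity constructed in the preceding step.
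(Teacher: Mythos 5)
Your proposal is correct and takes essentially the same route as the paper: you build long $\calf_n$-covers at infinity from the sets $X(W,t)$ (using Lemma~\ref{lem:estimate_for_d_W} for the long condition and the stabilizer extension for the $\calf_n$-condition), combine them with the cover of the cocompact part coming from~\cite{Bartels-Lueck(2010CAT(0)flow)} to handle short periodic orbits, and then invoke the machinery of~\cite{Bartels-Lueck(2010CAT(0)flow)} to conclude transfer reducibility. The only points you leave implicit --- verifying Convention~5.1 of that paper without cocompactness (the bound on orders of finite subgroups via virtual torsion-freeness of $\GL_n(\IZ)$) and the contracting-transfers condition --- are exactly the items the paper likewise delegates to~\cite{Bartels-Lueck(2010CAT(0)flow)}.
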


  To prove this we will use the space $X=X(\IR^n)$ and
  its subsets $X(W,t)$ considered in
  Sections~\ref{sec:The_space_of_inner_products_and_the_volume_function}
  and~\ref{sec:Sublattices_of_Zn}.

  For a $G$-space $X$ and a family of subgroups $\calf$, a subset 
  $U \subseteq X$ is called  an $\calf$-subset if
  $G_U := \{ g \in G \mid g(U) = U \}$ belongs to $\calf$ and
  $gU \cap U = \emptyset$ for all $g \in G \setminus G_U$.
  An open $G$-invariant cover consisting of $\calf$-subsets 
  is called an $\calf$-cover.

  \begin{lemma} \label{lem:covering_from_Grayson} 
    Consider for $t \ge   1$ the collection of subsets of $X$
    \[
    \calw(t) = \{X(W,t) \mid W \in \call'\}.
    \]
    It is a $\GL_n(\IZ)$-invariant set of open $\calf_n$-subsets of $X$ whose
    covering dimension is at most $(n-2)$.
  \end{lemma}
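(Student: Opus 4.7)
The plan is to verify the four assertions (invariance, openness, the $\calf_n$-subset property, and the dimension bound) in turn. Invariance of $\calw(t)$ under $\GL_n(\IZ)$ is immediate from Lemma~\ref{lem:Grayson}~(i), and openness of each $X(W,t)$ follows from the continuity of $d_W$ noted just after Lemma~\ref{lem:estimate_for_d_W}.

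For the $\calf_n$-subset property, I would first identify the setwise stabilizer: the claim is that $G_{X(W,t)} = \operatorname{Stab}_{\GL_n(\IZ)}(W)$ and, simultaneously, that $g X(W,t) \cap X(W,t) = \emptyset$ whenever $gW \ne W$. Indeed, Lemma~\ref{lem:Grayson}~(i) gives $gX(W,t)=X(gW,t)$, and if this meets $X(W,t)$ then Lemma~\ref{lem:Grayson}~(iii) forces $gW\subseteq W$ or $W\subseteq gW$. Since $g\in \GL_n(\IZ)$, $W$ and $gW$ have the same $\IZ$-rank, and a proper inclusion of sublattices strictly increases the rank (the quotient embeds into a torsion-free group and is nonzero), so $gW=W$.

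Next I would identify $\operatorname{Stab}_{\GL_n(\IZ)}(W)$ explicitly. Pick a $\IZ$-basis of $\IZ^n$ whose first $m := \rk_{\IZ} W$ vectors span $W$. With respect to this basis every element of the stabilizer is a block upper-triangular matrix
\[
\begin{pmatrix} A & B \\ 0 & C \end{pmatrix},\qquad A\in\GL_m(\IZ),\; C\in\GL_{n-m}(\IZ),\; B\in M_{m\times(n-m)}(\IZ),
\]
which yields a split extension
\[
1 \longrightarrow \IZ^{m(n-m)} \longrightarrow \operatorname{Stab}_{\GL_n(\IZ)}(W) \longrightarrow \GL_m(\IZ)\times\GL_{n-m}(\IZ) \longrightarrow 1.
\]
Since $W\in\call'$ we have $1\le m\le n-1$, so both $m$ and $n-m$ are strictly less than $n$ and this is an extension of exactly the shape required by the definition of $\calf_n$ (with $r=2$). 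Hence $G_{X(W,t)}\in\calf_n$.

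Finally, for the covering dimension, suppose $X(W_1,t),\dots,X(W_k,t)$ are pairwise distinct elements of $\calw(t)$ with a common point. By Lemma~\ref{lem:Grayson}~(iii), after reindexing we have $W_1\subseteq\cdots\subseteq W_k$. Distinctness of the $X(W_i,t)$ forces the $W_i$ to be distinct, so the inclusions are strict and the sequence $\rk_{\IZ} W_1 < \cdots < \rk_{\IZ} W_k$ lies strictly inside $\{1,\dots,n-1\}$, giving $k\le n-1$. Hence the multiplicity of $\calw(t)$ is at most $n-1$, i.e., its covering dimension is at most $n-2$. The one non-routine step is the stabilizer analysis, which crucially uses Lemma~\ref{lem:Grayson}~(iii) to pass from $g X(W,t)\cap X(W,t)\neq\emptyset$ to $gW=W$; once this is in hand, everything else is bookkeeping.
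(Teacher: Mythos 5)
Your proof is correct and follows essentially the same route as the paper: invariance and the dimension bound come from Lemma~\ref{lem:Grayson}~\ref{lem:Grayson:G-invariance} and~\ref{lem:Grayson:intersection} via the chain-length count, and the $\calf_n$-property is established by identifying the stabilizer of $W$ as a block upper-triangular group sitting in an extension of $\hom_{\IZ}(V,W)\cong\IZ^{m(n-m)}$ by $\GL_m(\IZ)\times\GL_{n-m}(\IZ)$ with $m,n-m<n$. Your explicit rank argument showing that $X(W,t)\cap X(gW,t)\neq\emptyset$ forces $gW=W$ just spells out a step the paper leaves implicit in its citation of Lemma~\ref{lem:Grayson}.
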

  
  \begin{proof}
    The set $\calw(t)$ is $\GL_n(\IZ)$-invariant because of
    Lemma~\ref{lem:Grayson}~\ref{lem:Grayson:G-invariance}
    and its covering dimension is
    bounded by $(n-2)$ because of 
    Lemma~\ref{lem:Grayson}~\ref{lem:Grayson:intersection} since for 
    any chain of sublattices 
    $\{0\} \subsetneq W_0 \subsetneq W_1 \subsetneq 
                 \cdots \subsetneq W_r \subsetneq \IZ^n$ of $\IZ^n$
    we have $r \le n-2$.

    It remains to show that $\calw (t)$ consists of $\calf_n$-subsets. 
    Consider $W \in \call'$ and $g \in \GL_n(\IZ)$.  
    Lemma~\ref{lem:Grayson} implies:
    \[
      X(W,t) \cap g X(W,t) \not= \emptyset \Longleftrightarrow 
      X(W,t)  \cap X(gW,t) \not= \emptyset \Longleftrightarrow gW = W.
    \]
    Put $\GL_n(\IZ)_W = \{g \in \GL_n(\IZ) \mid gW = W\}$. 
    Choose $V \subset \IZ^n$ with
    $W \oplus V = \IZ^n$.  Under this identification every element 
    $\phi \in \GL_n(\IZ)_W$ is of the shape
    \[
      \left(
       \begin{matrix}
        \phi_W & \phi_{V,W}
        \\
        0 & \phi_V
       \end{matrix}
      \right)
    \]
    for $\IZ$-automorphism $\phi_V$ of $V$, $\phi_W$ of $W$ and a 
    $\IZ$-homomorphism $\phi_{V,W} \colon V \to W$.  
    Define
    \[
     p \colon \GL_n(\IZ)_W \to \aut_{\IZ}(W) \times \aut_{\IZ}(V), \quad
     \left(\begin{matrix} \phi_W & \phi_{V,W} \\
                               0 & \phi_V
           \end{matrix}\right)
     \mapsto (\phi_W,\phi_V)
    \]
    and
    \[
     i \colon \hom_{\IZ}(V,W) \to \GL_n(\IZ)_W, \quad \psi \mapsto
     \left(\begin{matrix}  \id_W & \psi   \\
                               0 & \id_V
             \end{matrix}\right).
    \]
    Then we obtain an exact sequence of groups
    \[
     1 \to \hom_{\IZ}(V,W) \xrightarrow{i} \GL_n(\IZ)_W \xrightarrow{\pr}
       \aut_{\IZ}(W) \times \aut_{\IZ}(V) \to 1,
    \]
    where $\hom_{\IZ}(V,W)$ is the abelian group given by the obvious
    addition.     
    Since both $V$ and $W$ are different from $\IZ^n$,
    we get $\aut_{\IZ}(V) \cong \GL_{n(V)}(\IZ)$ and 
    $\aut_{\IZ}(W) \cong \GL_{n(W)}(\IZ)$  for $n(V), n(W) < n$. 
    Hence each element in $\calw(t)$ is an $\calf_n$-subset 
    with respect  to the $\GL_n(\IZ)$-action.
  \end{proof}

  For a subset $A$ of a metric space $X$ and $\alpha \in \IR^{>0}$ we denote by
  \[
    B_{\alpha}(A) := 
     \bigl\{x \in X \mid d_X(x,a) < \alpha \;\text{for some}\; a \in A\bigr\}.
  \]
  the $\alpha$-neighborhood of $A$.

  \begin{lemma} \label{lem:comparison} For every $\alpha > 0$ and $t \ge 1$ there
    exists $\beta > 0$ such that for every $W \in \call'$ we have
    $B_{\alpha}(X(W,t+ \beta)\bigr) \subseteq X(W,t)$.
  \end{lemma}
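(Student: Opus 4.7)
The plan is to reduce this directly to the quantitative control on $d_W$ provided by Lemma~\ref{lem:estimate_for_d_W}. That lemma gives a bound on how $d_W$ can change under a displacement of size at most $\alpha$ which is uniform in $W \in \call'$: if $d_X(x,y) \le \alpha$ then $d_W(y) \ge d_W(x)/e^{2\sqrt{n}\alpha}$. The crucial point is that the multiplicative constant $e^{2\sqrt{n}\alpha}$ depends only on $\alpha$ (and $n$), not on $W$ or on the particular point; this is exactly what allows a single $\beta$ to work for every $W$.

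Concretely, given $\alpha > 0$ and $t \ge 1$, I would set
\[
\beta := t\bigl(e^{2\sqrt{n}\alpha}-1\bigr),
\]
so that $(t+\beta)/e^{2\sqrt{n}\alpha} = t$. Then for any $W \in \call'$ and any $y \in B_{\alpha}\bigl(X(W,t+\beta)\bigr)$, choose a witness $a \in X(W,t+\beta)$ with $d_X(y,a) < \alpha$. By definition of $X(W,t+\beta)$ we have $d_W(a) > t+\beta$, and Lemma~\ref{lem:estimate_for_d_W} applied to the pair $(a,y)$ yields
\[
d_W(y) \;\ge\; d_W(a)/e^{2\sqrt{n}\alpha} \;>\; (t+\beta)/e^{2\sqrt{n}\alpha} \;=\; t,
\]
so $y \in X(W,t)$. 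This gives $B_{\alpha}(X(W,t+\beta)) \subseteq X(W,t)$ as required.

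There is essentially no obstacle; the entire content of the statement has been packaged into Lemma~\ref{lem:estimate_for_d_W}, and the only thing to notice is that the estimate there is uniform in $W$, which is precisely what lets a single $\beta$ be chosen for all sublattices simultaneously. The only minor care needed is keeping track of strict versus non-strict inequalities: using $d_X(y,a) < \alpha$ together with $d_W(a) > t+\beta$ yields the strict inequality $d_W(y) > t$ needed for membership in the open set $X(W,t)$.
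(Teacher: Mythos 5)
Your proof is correct and follows the same route as the paper, which deduces the lemma from Lemma~\ref{lem:estimate_for_d_W} by choosing $\beta > (e^{2\sqrt{n}\alpha}-1)\cdot t$; your choice $\beta = (e^{2\sqrt{n}\alpha}-1)\cdot t$ also works because the strict inequality $d_W(a) > t+\beta$ survives the division by $e^{2\sqrt{n}\alpha}$, as you note.
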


  \begin{proof}
    This follows from Lemma~\ref{lem:estimate_for_d_W} if we choose 
    $\beta > (e^{2\sqrt{n}\alpha} -1 ) \cdot t$.
  \end{proof}

  Let $\FS(X)$ be the flow space associated to the $\CAT(0)$-space 
  $X = X(\IR^n)$ in~\cite[Section~2]{Bartels-Lueck(2010CAT(0)flow)}.
  It consists of generalized geodesics, i.e., continuous maps 
  $c \colon \IR \to X$ for which there is a closed subinterval
  $I$ of $\IR$ such that $c|_{I}$ is a geodesic and $c|_{\IR \setminus I}$
  is locally constant.
  The flow $\Phi$ on $\FS(X)$ is defined by the formula 
  $(\Phi_\tau(c))(t) = c (\tau + t)$.

  \begin{lemma} \label{lem:estimate_for_d(0)}
    Consider $\delta, \tau > 0$ and $c \in \FS(X)$. 
    Then we get for $d \in B_{\delta}\bigl(\Phi_{[-\tau,\tau]}(c)\bigr)$
    \[
      d_X\bigl(d(0),c(0)\bigr) < 4 + \delta + \tau.
    \]
  \end{lemma}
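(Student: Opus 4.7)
The plan is to reduce everything to the triangle inequality, using the $1$-Lipschitz property of generalized geodesics and a standard estimate that compares $d_{\FS(X)}$ with the evaluation-at-$0$ map $\FS(X) \to X$.

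First, unwinding the definition of $B_{\delta}\bigl(\Phi_{[-\tau,\tau]}(c)\bigr)$, I would produce an $s \in [-\tau,\tau]$ with $d_{\FS(X)}(d, \Phi_s c) < \delta$. The triangle inequality in $X$ then gives
\[
d_X(d(0), c(0)) \le d_X\bigl(d(0), (\Phi_s c)(0)\bigr) + d_X\bigl((\Phi_s c)(0), c(0)\bigr),
\]
and $(\Phi_s c)(0) = c(s)$ by definition of the flow. Since $c$ is a generalized geodesic it is $1$-Lipschitz, so the second summand is at most $|s| \le \tau$. This already handles the contribution $\tau$ in the stated bound.

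The core of the argument is then to bound the first summand by $4 + \delta$. For this I would invoke the standard estimate, valid for any two generalized geodesics $e, e' \in \FS(X)$, that
\[
d_X(e(0), e'(0)) \le 4 + d_{\FS(X)}(e, e'),
\]
applied to $e = d$ and $e' = \Phi_s c$. The justification is just an unwinding of the integral definition of $d_{\FS(X)}$ from \cite[Section~2]{Bartels-Lueck(2010CAT(0)flow)}: writing $D := d_X(e(0), e'(0))$, the $1$-Lipschitz property of generalized geodesics gives $d_X(e(t), e'(t)) \ge \max(D - 2|t|, 0)$ for all $t \in \IR$, and integrating this pointwise lower bound against the exponential weight defining $d_{\FS(X)}$ produces an inequality of the shape $d_{\FS(X)}(e, e') \ge D - 4$ for the normalization used in that reference.

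Putting the two pieces together yields $d_X(d(0), c(0)) < (4 + \delta) + \tau$, as claimed. The only real obstacle is the third paragraph's bookkeeping with the specific normalization of the flow space metric; all of the geometric content is already visible in the $1$-Lipschitz property of generalized geodesics, and after the comparison estimate is in hand the conclusion is a single application of the triangle inequality.
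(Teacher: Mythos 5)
Your proof is correct and takes essentially the same route as the paper: choose $s\in[-\tau,\tau]$ with $d_{\FS(X)}\bigl(d,\Phi_s(c)\bigr)<\delta$ and apply the triangle inequality in $X$ through the point $\Phi_s(c)(0)=c(s)$. The only difference is in how the two summands are bounded: the paper invokes \cite[Lemma~1.4~(i)]{Bartels-Lueck(2010CAT(0)flow)} (the sharper comparison $d_X\bigl(e(0),e'(0)\bigr)\le d_{\FS(X)}(e,e')+2$) for both terms together with \cite[Lemma~1.3]{Bartels-Lueck(2010CAT(0)flow)}, getting $(\delta+2)+(|s|+2)$, whereas you bound the flow-shift term by $\tau$ directly via the $1$-Lipschitz property of generalized geodesics and put the whole constant $4$ into the evaluation comparison, which your integration of the pointwise lower bound $d_X\bigl(e(t),e'(t)\bigr)\ge\max(D-2|t|,0)$ against the weight $\tfrac{1}{2e^{|t|}}$ does justify (it in fact yields the constant $2$).
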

  
  \begin{proof}
    Choose $s \in [-\tau,\tau]$ with $d_{\FS(X)}\bigl(d,\Phi_s(c)\bigr) < \delta$.
    We estimate using~\cite[Lemma~1.4~(i)]{Bartels-Lueck(2010CAT(0)flow)} 
    and a special case of~\cite[Lemma~1.3]{Bartels-Lueck(2010CAT(0)flow)}
    \begin{eqnarray*}
      d_X\bigl(d(0),c(0)\bigr) 
      & \le &
      d_X\bigl(d(0),\Phi_s(c)(0)\bigr) + d_X\bigl(\Phi_s(c)(0),c(0)\bigr)
      \\
      & \le & 
      d_{\FS(X)}\bigl(d,\Phi_s(c)\bigr) + 2  + d_{\FS(X)}\bigl(\Phi_s(c),c\bigr) +2
      \\
      & <  &
      \delta + 2 + |s| + 2
      \; \le \;  
      4 + \delta + \tau. 
    \end{eqnarray*}
  \end{proof}

  Let $ev_0 \colon \FS(X) \to X$, $c \mapsto c(0)$
  be the evaluation map at $0$.  
  It is $\GL_n(\IZ)$-equivariant, uniform  continuous  and 
  proper~\cite[Lemma~1.10]{Bartels-Lueck(2010CAT(0)flow)}. 
  Define subsets of $\FS(X)$ by
  \[
     Y(W,t)   :=   \ev^{-1}_0\bigl(X(W,t)\bigr) \quad \text{for} \quad 
                           t \geq 1, \; W \in \call',
  \]
  and set $\calv(t)  := \bigl\{Y(W,t)  \mid W \in \call' \bigr\}$,
  $|\calv(t)| :=  \bigcup_{W \in \call'} Y(W,t) \subseteq \FS(X)$.

  \begin{lemma} \label{lem:covering_infinity}
    Let $\tau,\delta > 0$ and $t \ge  1$ and set 
    $\alpha:= 4 + \delta + \tau$. 
    If $\beta > 0$ is such that for every $W \in \call'$ 
    we have $B_{\alpha}\bigl(X(W,t+ \beta)\bigr) \subseteq X(W,t)$
    then the following holds
    \begin{enumerate} 
      \item \label{lem:covering_infinity:SL_n(Z)-invariance}
         The set $\calv(t)$ is $\GL_n(\IZ)$-invariant;
      \item \label{lem:covering_infinity:calf_n}
         Each element in $\calv(t)$ is an open $\calf_n$-subset with 
         respect to the $\GL_n(\IZ)$-action;
      \item \label{lem:covering_infinity:dimension}
         The dimension of $\calv(t)$ is less or equal to $(n-2)$;
      \item \label{lem:covering_infinity:cocompact}
        The complement of $|\calv(t+\beta)|$ in $\FS(X)$ is a cocompact 
        $\GL_n(\IZ)$-subspace.
      \item \label{lem:covering_infinity:long}
        For every $c \in |\calv(t+\beta)|$ there exists $W \in \call'$ with
        \[
          B_{\delta}\bigl(\Phi_{[-\tau,\tau]}(c)\bigr) \subset Y(W,t).
        \]
    \end{enumerate}
  \end{lemma}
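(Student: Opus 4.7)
The plan is to derive all five assertions from the corresponding facts about the family $\calw(t)$ of open subsets of $X$ established in Lemma~\ref{lem:covering_from_Grayson} and Lemma~\ref{lem:Grayson}, by pulling them back through $\ev_0$. The central identity is $Y(W,t) = \ev_0^{-1}(X(W,t))$, which at once gives $|\calv(t)| = \ev_0^{-1}(|\calw(t)|)$ and $\bigcap_i Y(W_i,t) = \ev_0^{-1}\bigl(\bigcap_i X(W_i,t)\bigr)$. Since $\ev_0$ is $\GL_n(\IZ)$-equivariant, continuous, and proper, the structural properties (i)--(iv) transfer in a straightforward way; the substantive content of the lemma is the ``long'' covering property (v).

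For (i), equivariance of $\ev_0$ combined with Lemma~\ref{lem:Grayson}~(i) suffices. For (ii), openness of $Y(W,t)$ follows from continuity of $\ev_0$; any $g \in \GL_n(\IZ)$ with $g Y(W,t) \cap Y(W,t) \neq \emptyset$ must satisfy $g X(W,t) \cap X(W,t) \neq \emptyset$, hence $gW = W$, so the setwise stabilizer of $Y(W,t)$ coincides with that of $X(W,t)$, which lies in $\calf_n$ by Lemma~\ref{lem:covering_from_Grayson}. For (iii), a $(k+1)$-fold intersection of elements of $\calv(t)$ is nonempty only if the corresponding intersection of elements of $\calw(t)$ is nonempty, so the dimension bound $n-2$ from Lemma~\ref{lem:covering_from_Grayson} transfers. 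Part~(iv) uses properness: by Lemma~\ref{lem:Grayson}~(ii) there exists a compact $K \subseteq X$ with $\GL_n(\IZ) \cdot K \supseteq X \setminus |\calw(t+\beta)|$, and then $\ev_0^{-1}(K)$ is a compact subset of $\FS(X)$ whose $\GL_n(\IZ)$-translates cover $\FS(X) \setminus |\calv(t+\beta)| = \ev_0^{-1}\bigl(X \setminus |\calw(t+\beta)|\bigr)$.

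The main point is (v), and it is here that the hypothesis on $\beta$ interacts with the flow estimate of Lemma~\ref{lem:estimate_for_d(0)}. Given $c \in |\calv(t+\beta)|$, choose $W \in \call'$ with $c(0) = \ev_0(c) \in X(W,t+\beta)$. For any $d \in B_{\delta}\bigl(\Phi_{[-\tau,\tau]}(c)\bigr)$, Lemma~\ref{lem:estimate_for_d(0)} yields $d_X\bigl(d(0),c(0)\bigr) < 4 + \delta + \tau = \alpha$, so $d(0) \in B_\alpha\bigl(X(W,t+\beta)\bigr) \subseteq X(W,t)$ by the assumption on $\beta$; hence $d \in Y(W,t)$, which is the required containment. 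The only genuine obstacle --- namely arranging that $X(W,t+\beta)$ sits inside $X(W,t)$ with enough room to absorb an $\alpha$-thickening in $X$ --- is already resolved by Lemma~\ref{lem:comparison} (which in turn rests on the Lipschitz-type estimate for $d_W$ from Lemma~\ref{lem:estimate_for_d_W}). Once $\beta$ is chosen from that lemma, the verification of (v) collapses to the one-line estimate above.
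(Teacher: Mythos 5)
Your proposal is correct and follows essentially the same route as the paper: parts (i)--(iv) are pulled back from Lemma~\ref{lem:covering_from_Grayson} and Lemma~\ref{lem:Grayson} via the equivariance, continuity and properness of $\ev_0$, and part (v) is exactly the paper's one-line argument combining Lemma~\ref{lem:estimate_for_d(0)} with the hypothesis $B_{\alpha}\bigl(X(W,t+\beta)\bigr) \subseteq X(W,t)$. Your slightly more detailed spelling-out of the stabilizer identification in (ii) and of the cocompactness transfer in (iv) matches what the paper leaves implicit.
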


  The existence of a suitable $\beta$ is the assertion of 
  Lemma~\ref{lem:comparison}.

  \begin{proof}~\ref{lem:covering_infinity:SL_n(Z)-invariance},~\ref{lem:covering_infinity:calf_n}
    and~\ref{lem:covering_infinity:dimension} follow from
    Lemma~\ref{lem:covering_from_Grayson} since $ev_0$ is 
    $\GL_n ( \IZ)$-equivariant.
    \\[2pt]~\ref{lem:covering_infinity:cocompact} follows from
    Lemma~\ref{lem:Grayson}~\ref{lem:Grayson:G-compact} since the map $\ev_0
    \colon \FS(X) \to X$ is proper and
    $\ev^{-1}_0\bigl(X-|\calw(t+\beta)|\bigr) = \FS(X) - |\calv(t+\beta)|$.
    \\[2pt]
    For~\ref{lem:covering_infinity:long} consider $c \in |\calv(t+\beta)|$. 
    Choose $W \in \call'$ with $c \in Y(W,t+\beta)$.  
    Then $c(0) \in X(W,t+\beta)$.  
    Consider $d \in B_{\delta}\bigl(\Phi_{[-\tau,\tau]}(c)\bigr)$. 
    Lemma~\ref{lem:estimate_for_d(0)} implies 
    $d_X\bigl(d(0),c(0)\bigr) < \alpha$. 
    Hence $d(0) \in B_{\alpha}(c(0))$.  
    We conclude $d(0) \in B_{\alpha}\bigl(X(W,t+\beta)\bigr))$. 
    This implies $d(0) \in X(W,t)$ and hence $d \in Y(W,t)$. 
    This shows 
      $B_{\delta}\bigl(\Phi_{[-\tau,\tau]}(c)\bigr) \subseteq Y(W,t)$.
  \end{proof}

  Let $\FS_{\le \gamma}(X)$ be the subspace of $\FS(X)$ 
  of those generalized geodesics $c$  for which there exists for every 
  $\epsilon > 0$ a number $\tau \in (0, \gamma + \epsilon]$ and 
  $g \in G$ such that $g\cdot c = \Phi_{\tau}(c)$ holds.
  As an instance of~\cite[Theorem~4.2]{Bartels-Lueck(2010CAT(0)flow)}
  we obtain the following in our situation. 
  
  \begin{theorem}
      \label{the:covering_of_FS(X)_gamma}  
    There is a natural number $M$  such that for every 
    $\gamma > 0$ and every compact subset $L\subseteq X$ there exists a $\GL_n(\IZ)$-invariant 
    collection $\calu$ of subsets  of $\FS(X)$ satisfying:
    \begin{enumerate}
      \item \label{the:covering_of_FS(X)_gamma:VCyc}
        Each element $U \in \calu$ is an open
        $\VCyc$-subset of the $\GL_n(\IZ)$-space $\FS(X)$;
      \item \label{the:covering_of_FS(X)_gamma:dim}
        We have $\dim \calu \leq M$;
      \item \label{the:covering_of_FS(X)_gamma:flow}
        There is $\epsilon > 0$ with the following property:
        for $c \in \FS_{\leq \gamma}(X)$ such that $c(t) \in \GL_n(\IZ) \cdot L$ for
        some $t \in \IR$ there is $U \in \calu$ such
        that $B_{\epsilon}(\Phi_{[-\gamma,\gamma]}(c)) \subseteq U$.
    \end{enumerate}
  \end{theorem}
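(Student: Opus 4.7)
The statement is declared as an instance of Theorem~4.2 of~\cite{Bartels-Lueck(2010CAT(0)flow)}, so the plan is to match our data to the hypotheses of that result and then invoke it. Three pieces of verification are needed. First, the structural data: by Section~\ref{sec:The_space_of_inner_products_and_the_volume_function}, the space $X = X(\IR^n)$ is a finite-dimensional proper $\CAT(0)$-space and $\GL_n(\IZ)$ acts on it isometrically via its inclusion into $\SL_n^{\pm}(\IR)$; since $\GL_n(\IZ)$ is discrete in $\SL_n^{\pm}(\IR)$ and the point-stabilizers in the ambient Lie group are compact, this action is proper. The flow space $\FS(X)$, the flow $\Phi$, the evaluation map $\ev_0$, and the period-bounded subspaces $\FS_{\le\gamma}(X)$ are precisely the objects constructed in the cited paper, so no translation is required.

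Second, I would read off the output: the theorem produces a natural number $M$ that depends only on the dimension of $X$, hence only on $n$, and in particular is independent of $\gamma$ and of $L$; together with, for each $\gamma > 0$ and each compact $L$, a $\GL_n(\IZ)$-invariant collection $\calu$ of open $\VCyc$-subsets of $\FS(X)$ satisfying $\dim \calu \le M$ and the long-cover property~(iii). The uniformity of $M$ is a conclusion of the cited theorem, not a condition to check.

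The conceptual point, and the reason the cited theorem is formulated with a compact set $L$ rather than a cocompactness hypothesis, is that $\GL_n(\IZ)$ does not act cocompactly on $X$. The long-cover property is asserted only for those generalized geodesics $c$ whose orbit visits the cocompact subset $\GL_n(\IZ) \cdot L$; on this cocompact portion the local covering construction of~\cite{Bartels-Lueck(2010CAT(0)flow)} applies verbatim. The generalized geodesics escaping every compact set lie outside the scope of the present theorem and must be handled separately, which is precisely the role of Lemma~\ref{lem:covering_infinity}.

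The main obstacle, from our perspective, is not in this proof but in verifying that Theorem~4.2 of~\cite{Bartels-Lueck(2010CAT(0)flow)} admits the formulation with a compact $L$ in place of a cocompactness assumption; we import this as given. Once that is granted, the proof of Theorem~\ref{the:covering_of_FS(X)_gamma} reduces to the verifications above, and the combination with Lemma~\ref{lem:covering_infinity} to produce a global $\calf_n$-cover of $\FS(X)$ is carried out in the subsequent arguments.
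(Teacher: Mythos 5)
Your proposal matches the paper's treatment: the paper states this theorem simply as an instance of \cite[Theorem~4.2]{Bartels-Lueck(2010CAT(0)flow)}, whose hypotheses (a proper isometric action of $\GL_n(\IZ)$ on the finite-dimensional proper $\CAT(0)$-space $X$, with no cocompactness required since the conclusion is localized to the compact set $L$) are exactly the points you check. The formulation with a compact $L$ is indeed how the cited Theorem~4.2 is stated, so nothing further needs to be imported.
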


  \begin{definition}[{\cite[Definition~5.5]{Bartels-Lueck(2010CAT(0)flow)}} ]
    \label{def:at-infinity_plus_periods} 
    Let $G$ be a group, $\calf$ be a family of subgroups of $G$,
    $(\FS,d_{\FS})$ be a locally compact metric space with a proper 
    isometric $G$-action and $\Phi \colon \FS \times \IR \to \FS$ be a $G$-equivariant flow.

    We say that $\FS$ 
      \emph{admits long $\calf$-covers at infinity and at periodic flow lines}
    if the following holds:
   
    There is $N > 0$ such that for every $\gamma > 0$ there is a
    $G$-invariant collection of open 
    $\calf$-subsets $\calv$  of  $\FS$ and $\e > 0$  
    satisfying:
    \begin{enumerate}
         \item \label{def:at-infty_plus_periods:dim}
             $\dim \calv \leq N$;
         \item \label{def:at-infty_plus_periods:covers}
           there is a compact subset $K \subseteq \FS$ such that
           \begin{enumerate}
             \item $\FS_{\leq \gamma} \cap G \cdot K = \emptyset$;
             \item for $z \in \FS - G \cdot K$ there is 
                $V \in \calv$ such that 
                $B_\e(\Phi_{[-\gamma,\gamma]}(z)) \subset V$.
           \end{enumerate}
    \end{enumerate}
  \end{definition}

  We remark that it is natural to think of this definition as requiring 
  two conditions, the first dealing with everything outside some 
  cocompact subset (``at infinity'') and the second dealing with
  (short) periodic orbits of the flow that meet
  a given cocompact subset (``at periodic flow lines''). 
  In proving that this condition is satisfied in our situation in
  the next lemma we deal with these conditions separately.
  For the first condition we use the sets $Y(W,t)$ introduced earlier;
  for the second the theorem cited above.
  
  \begin{lemma} \label{lem:long_coverings}
    The flow space $\FS(X)$   admits long $\calf_n$-covers at infinity
    and at periodic flow lines.
  \end{lemma}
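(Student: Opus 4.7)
The plan is to build the required collection $\calv$ as the union of two pieces: one constructed from the sets $Y(W,t)$ of Lemma~\ref{lem:covering_infinity} that handles points ``at infinity'', and one supplied by Theorem~\ref{the:covering_of_FS(X)_gamma} that handles short periodic flow lines over a compact piece of $X$. For the uniform dimension bound I would set $N = M + n - 1$, where $M$ is the universal constant from Theorem~\ref{the:covering_of_FS(X)_gamma}; since the union of two open collections has covering dimension at most the sum of the dimensions plus one, $\dim(\calv(1) \cup \calu) \leq (n-2) + M + 1 = N$ independently of $\gamma$.

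Given $\gamma > 0$, I would fix $\delta = 1$ and put $\alpha = 4 + \delta + \gamma$. Lemma~\ref{lem:comparison} with $t = 1$ produces $\beta > 0$ such that $B_\alpha(X(W,1+\beta)) \subset X(W,1)$ for every $W \in \call'$. The cocompactness statement of Lemma~\ref{lem:covering_infinity} provides a compact $K_0 \subset \FS(X)$ with $\FS(X) \setminus |\calv(1+\beta)| \subseteq \GL_n(\IZ) \cdot K_0$; set $L = \ev_0(K_0)$, which is compact in $X$, and apply Theorem~\ref{the:covering_of_FS(X)_gamma} to $\gamma$ and this $L$ to obtain a $\GL_n(\IZ)$-invariant collection $\calu$ of open $\VCyc$-subsets of $\FS(X)$ (which are $\calf_n$-subsets since $\VCyc \subseteq \calf_n$) together with a constant $\epsilon_0 > 0$. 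I would then take $\calv = \calv(1) \cup \calu$ and $\epsilon = \min(\delta, \epsilon_0)$.

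It remains to construct a compact $K$ satisfying $\FS_{\le \gamma}(X) \cap \GL_n(\IZ) \cdot K = \emptyset$ such that everything outside $\GL_n(\IZ) \cdot K$ is covered by $\calv$. My strategy is to remove from $K_0$ a suitable open neighborhood $A$ of $K_0 \cap \FS_{\le \gamma}(X)$ and set $K = K_0 \setminus A$. For each $c \in K_0 \cap \FS_{\le \gamma}(X)$ Theorem~\ref{the:covering_of_FS(X)_gamma} supplies $U_c \in \calu$ with $B_{\epsilon_0}(\Phi_{[-\gamma,\gamma]}(c)) \subset U_c$; since the containment $B_\epsilon(\Phi_{[-\gamma,\gamma]}(z)) \subset U_c$ is an open condition in $z$ (continuity of $\Phi$ and openness of $U_c$) and $K_0 \cap \FS_{\le \gamma}(X)$ is compact, finitely many such neighborhoods suffice and their union $A$ works. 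For $z \in \FS(X) \setminus \GL_n(\IZ) \cdot K$ one then argues: either $z \in |\calv(1+\beta)|$, whence the long-ball statement of Lemma~\ref{lem:covering_infinity} with $\tau = \gamma$ gives $W$ with $B_\epsilon(\Phi_{[-\gamma,\gamma]}(z)) \subset Y(W,1) \in \calv$; or $z \in \GL_n(\IZ) \cdot A$, in which case $\GL_n(\IZ)$-invariance of $\calu$ combined with the construction of $A$ places $B_\epsilon(\Phi_{[-\gamma,\gamma]}(z))$ inside a translate of some $U_c \in \calu$.

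The main obstacle is producing $K$ so that both conditions of Definition~\ref{def:at-infinity_plus_periods} hold simultaneously: the removed neighborhood $A$ must be large enough that $K$ avoids $\FS_{\le \gamma}(X)$ along the whole $\GL_n(\IZ)$-orbit, yet small enough that every point of $A$ still has its flow ball contained in some member of $\calu$. This balancing relies on the openness of the membership condition in $\calu$ together with compactness of $K_0 \cap \FS_{\le \gamma}(X)$; the $\GL_n(\IZ)$-invariance of $\FS_{\le \gamma}(X)$ then upgrades disjointness from $K$ to disjointness from the entire orbit $\GL_n(\IZ) \cdot K$.
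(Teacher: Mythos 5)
Your overall architecture is the same as the paper's: you combine $\calv(t)$ with the collection $\calu$ supplied by Theorem~\ref{the:covering_of_FS(X)_gamma}, take $N=M+n-1$, and split points outside a cocompact core into the case ``at infinity'' (handled by Lemma~\ref{lem:covering_infinity}~\ref{lem:covering_infinity:long}) and the case of short periodic orbits (handled by the theorem). The divergence is in how you manufacture the compact set $K$, and there two steps do not hold as written. First, the claim that $\{z \mid B_\epsilon(\Phi_{[-\gamma,\gamma]}(z)) \subset U_c\}$ is open ``by continuity of $\Phi$ and openness of $U_c$'' is not correct for a fixed radius: with your choice $\epsilon = \min(\delta,\epsilon_0)$ it can happen that $\epsilon = \epsilon_0$, and then Theorem~\ref{the:covering_of_FS(X)_gamma} gives the inclusion at $c$ with no margin at all, so the existence of the neighborhoods $A_c$ is not guaranteed (an open ball of fixed radius around a slightly displaced set can leave $U_c$). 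You need slack: either choose $\epsilon$ strictly smaller than $\epsilon_0$ (say $\epsilon=\min(1,\epsilon_0)/2$) and propagate closeness along the flow segment via the estimate $d_{\FS(X)}(\Phi_s(z),\Phi_s(c)) \le e^{|s|}\, d_{\FS(X)}(z,c)$ of \cite[Lemma~1.3]{Bartels-Lueck(2010CAT(0)flow)}, or work with closed $\epsilon$-balls and use properness of $\FS(X)$ to extract a positive margin $\mu$, which is exactly what the paper does.

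Second, your finite subcover argument requires $K_0 \cap \FS_{\le\gamma}(X)$ to be compact, i.e.\ that $\FS_{\le\gamma}(X)$ is closed in $\FS(X)$. You assert this implicitly without proof or citation; it is true, but it needs an argument (properness of the $\GL_n(\IZ)$-action on $\FS(X)$, the bound $d_{\FS(X)}(\Phi_\tau(c),c)\le |\tau|$, and a separate treatment of the degenerate case where the periods of the approximating orbits tend to $0$), and it is not among the facts you invoke. Note that the paper's proof is organized precisely so that neither ingredient is needed: it defines the $\GL_n(\IZ)$-invariant set $S$ of all $c$ whose closed $\epsilon$-flow-ball lies in some member of $\calu\cup\calv(t)$, proves $S$ is open using properness of $\FS(X)$, checks $S \supseteq \FS(X)_{\le\gamma}\cup|\calv(t+\beta)|$, and then chooses a compact $K$ with $\GL_n(\IZ)\cdot K = \FS(X)-S$, which exists because $\FS(X)-S$ is a closed invariant subset of the cocompact set $\FS(X)-|\calv(t+\beta)|$; condition (a) of Definition~\ref{def:at-infinity_plus_periods} is then automatic. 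With the two repairs indicated your excision construction also goes through, but as stated these are genuine gaps.
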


  \begin{proof}
    Fix $\gamma > 0$. Choose $t \ge   1$. 
    Put $\delta := 1$ and $\tau := \gamma$. 
    Let $\beta > 0$ be the number appearing in
    Lemma~\ref{lem:covering_infinity} and let $M\in \IN$ be the number appearing in Theorem~\ref{the:covering_of_FS(X)_gamma}. 
    Since by Lemma~\ref{lem:Grayson}~\ref{lem:Grayson:G-compact} the
    complement of $|\calw(t+\beta)|$ in $X$ is cocompact, we can find a compact
    subset $L$ of this complement such that 
    \[ 
        \GL_n(\IZ) \cdot L = X \setminus |\calw(t+\beta)|.
    \]
    For this compact subset $L$ we obtain a real number
    $\epsilon > 0$ and a set $\calu$ of subsets of $\FS(X)$ from
    Theorem~\ref{the:covering_of_FS(X)_gamma}. 
    We can arrange that $\epsilon \le 1$.

    Consider $\calv := \calu \cup \calv(t)$, where $\calv(t)$ is the
    collection of open subsets defined before Lemma~\ref{lem:covering_infinity}.
    We want to show that $\calv$ satisfies
    the conditions appearing in Definition~\ref{def:at-infinity_plus_periods} 
    with respect to the number $N := M +n - 1$.

    Since the covering dimension of $\calu$ is less or equal to $M$ by
    Theorem~\ref{the:covering_of_FS(X)_gamma}~\ref{the:covering_of_FS(X)_gamma:dim}
    and the covering dimension of $\calv(t)$ is less or equal to $n-2$ by
    Lemma~\ref{lem:covering_infinity}~\ref{lem:covering_infinity:dimension}, the
    covering dimension of $\calu \cup \calv(t)$ is less or equal to $N$.

    Since $\calu$ and $\calv(t)$ are $\GL_n(\IZ)$-invariant by
    Theorem~\ref{the:covering_of_FS(X)_gamma} and
    Lemma~\ref{lem:covering_infinity}~\ref{lem:covering_infinity:SL_n(Z)-invariance},
    $\calu \cup \calv(t)$ is $\GL_n(\IZ)$-invariant.

    Since each element of $\calu$ is an open $\VCyc$-set by
    Theorem~\ref{the:covering_of_FS(X)_gamma}~\ref{the:covering_of_FS(X)_gamma:VCyc}
    and each element of $\calv(t)$ is an open $\calf_n$-subset by
    Lemma~\ref{lem:covering_infinity}~\ref{lem:covering_infinity:calf_n}, each
    element of $\calu \cup \calv(t)$ is an open $\calf_n$-subset, as 
    $\VCyc \subset \calf_n$.
    Define
    \[
     S := \{c \in \FS(X) \mid \exists Z \in \calu \cup \calv(t) \; \text{with}\;
     \overline{B}_{\epsilon}\bigl(\Phi_{[-\gamma,\gamma]}(c)\bigr) \subseteq Z\}.   
    \]
    This set $S$ contains $\FS(X)_{\le \gamma} \cup |\calv(t+\beta)|$ 
    by the following argument.  
    If $c \in |\calv(t+\beta)|$, then $c \in S$ by
    Lemma~\ref{lem:covering_infinity}~\ref{lem:covering_infinity:long}.  
    If $c \in \FS(X)_{\le \gamma}$ and $c \notin |\calv(t+\beta)|$, 
    then $c \in \FS(X)_{\le \gamma}$ and $c(0) \in \GL_n(\IZ) \cdot L$ 
    and hence $c \in S$ by
    Theorem~\ref{the:covering_of_FS(X)_gamma}~\ref{the:covering_of_FS(X)_gamma:flow}.

    The subset $S \subseteq \FS(X)$ is $\GL_n(\IZ)$-invariant because
    $\calu \cup \calv(t)$ is $\GL_n(\IZ)$-invariant.
    
    Next we prove that $S$ is open. 
    Assume that this is not the case. 
    Then there exists $c \in S$ and a sequence $(c_k)_{k \ge 1}$ of elements in 
    $\FS(X) - S$ such that $d_{\FS(X)}\bigl(c,c_k\bigr) <1/k$ 
    holds for $k \ge 1$. 
    Choose $Z \in \calu \cup \calv(t)$ with
    $\overline{B}_{\epsilon}\bigl(\Phi_{[-\gamma,\gamma]}(c)\bigr) \subseteq Z$.
    Since $\FS(X)$ is proper as metric space
    by~\cite[Proposition~1.9]{Bartels-Lueck(2010CAT(0)flow)} and
    $\overline{B}_{\epsilon}\bigl(\Phi_{[-\gamma,\gamma]}(c)\bigr)$ has bounded
    diameter, $\overline{B}_{\epsilon}\bigl(\Phi_{[-\gamma,\gamma]}(c)\bigr)$ is
    compact.  
    Hence we can find $\mu > 0$ with 
    $B_{\epsilon+  \mu}\bigl(\Phi_{[-\gamma,\gamma]}(c)\bigr) \subseteq Z$.  
    We conclude from~\cite[Lemma~1.3]{Bartels-Lueck(2010CAT(0)flow)} for all 
    $s \in [-\gamma,\gamma]$
    \[
       d_{\FS(X)}\bigl(\Phi_s(c),\Phi_s(c_k)\bigr) \le e^{|s|} \cdot
       d_{\FS(X)}\bigl(c,c_k\bigr) < e^{\tau} \cdot 1/k.
    \]
    Hence we get for $k \ge 1$
    \[
      B_{\epsilon}\bigl(\Phi_{[-\gamma,\gamma]}(c_k)\bigr) \subseteq B_{\epsilon+
          e^{\tau} \cdot 1/k}\bigl(\Phi_{[-\gamma,\gamma]}(c)\bigr)
    \]
    Since $c_k$ does not belong to $S$, we conclude that
    $B_{\epsilon+  e^{\tau} \cdot 1/k}\bigl(\Phi_{[-\gamma,\gamma]}(c)\bigr)$ 
    is not contained in $Z$.
    This implies $e^{\tau} \cdot 1/k \ge \mu$ for all $k \ge 1$, a contradiction.
 
    The $\GL_n(\IZ)$-set $\FS(X) - |\calv(t+\beta)|$ is cocompact by
    Lemma~\ref{lem:covering_infinity}~\ref{lem:covering_infinity:cocompact}.  
    Since $S$ is an open $\GL_n(\IZ)$-subset of $\FS(X)$ and 
    contains $|\calv(t+\beta)|$, the $\GL_n(\IZ)$-set $\FS(X) - S$ is cocompact.  
    Hence we can find a compact subset $K \subseteq \FS(X) - S$ satisfying
    \[
     \GL_n(\IZ) \cdot K = \FS(X) - S.
    \]
    Obviously $\FS(X)_{\le \gamma} \cap \GL_n(\IZ) \cdot K  = \emptyset$.
  \end{proof}

  \begin{proof}[Proof of Theorem~\ref{the:transfer-reducible}]
    The group $\GL_n(\IZ)$ and
    the associated flow space $\FS(X)$
    satisfy~\cite[Convention~5.1]{Bartels-Lueck(2010CAT(0)flow)} by the
    argument of~\cite[Section~6.2]{Bartels-Lueck(2010CAT(0)flow)}. 
    Notice that in \cite[Convention~5.1]{Bartels-Lueck(2010CAT(0)flow)} 
    it is \emph{not} required that the action is cocompact.  
    The argument of~\cite[Section~6.2]{Bartels-Lueck(2010CAT(0)flow)}
    showing that there 
    is a constant $k_G$ such that the order of any finite subgroup of $G$ is
    bounded by $k_G$ uses that the action is cocompact.  
    But such a number exists for $\GL_n(\IZ)$ as well, since $\GL_n(\IZ)$ is
    virtually torsion free (see~\cite[Exercise II.3 on p.~41]{Brown(1982)}).

    Because of~\cite[Proposition~5.11]{Bartels-Lueck(2010CAT(0)flow)} it
    suffices to show that $\FS(X)$ admits long $\calf_n$-covers at
    infinity and at periodic flow lines in the sense of
    Definition~\ref{def:at-infinity_plus_periods} and admits contracting
    transfers in the sense of~\cite[Definition~5.9]{Bartels-Lueck(2010CAT(0)flow)}.
    For the first condition this has been
    done in Lemma~\ref {lem:long_coverings},
    while the second condition follows from the argument given
    in~\cite[Section~6.4]{Bartels-Lueck(2010CAT(0)flow)}.
  \end{proof}

  \begin{proposition} \label{prop:FJ-for-GL_Z-K-theory-leq-1}
     The  $K$-theoretic FJC up to dimension $1$ holds for $\GL_n(\IZ)$.
  \end{proposition}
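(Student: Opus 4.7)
The plan is to proceed by induction on $n$.

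For the base case $n=1$, the group $\GL_1(\IZ)=\{\pm 1\}$ is finite, hence virtually cyclic, and the K-theoretic FJC up to dimension~$1$ is automatic.

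For the inductive step, assume the K-theoretic FJC up to dimension~$1$ holds for $\GL_k(\IZ)$ for every $k<n$. Theorem~\ref{the:transfer-reducible} together with \cite[Theorem~1.1]{Bartels-Lueck(2012annals)} supplies the K-theoretic FJC up to dimension~$1$ for $\GL_n(\IZ)$ with respect to the larger family $\calf_n$. To pass from $\calf_n$ down to $\VCyc$, I would invoke the transitivity principle (Proposition~\ref{prop:trans-princ}), which reduces the task to verifying the K-theoretic FJC up to dimension~$1$ for every $H\in\calf_n$.

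The verification for $H\in\calf_n$ splits into two cases. If $H$ is virtually cyclic, there is nothing to prove. Otherwise $H$ embeds into a group $K$ fitting into an extension
\[
 1\to P\to K\to Q:=\prod_{i=1}^{r}\GL_{n_i}(\IZ)\to 1
\]
with $P$ a finitely generated free abelian group and each $n_i<n$. By the inductive hypothesis together with inheritance of the FJC under finite products, the quotient $Q$ satisfies the K-theoretic FJC up to dimension~$1$. For any virtually cyclic subgroup $V\subseteq Q$, its preimage in $K$ is an extension of $V$ by the finitely generated free abelian group $P$, and is therefore virtually poly-cyclic, so it satisfies the full FJC by the result for virtually poly-cyclic groups cited in the introduction. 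Inheritance under extensions then yields the FJC up to dimension~$1$ for $K$, and a final application of subgroup inheritance delivers it for $H$.

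The main obstacle is essentially formal: one must ensure that the inheritance properties of the K-theoretic FJC up to dimension~$1$ — under subgroups, finite products, and extensions with control over preimages of virtually cyclic subgroups — are valid in the additive-coefficient formulation used here. These are packaged into the coefficient-category setup described in the introduction, so no substantive new argument is required beyond the transfer reducibility of Theorem~\ref{the:transfer-reducible} and the known FJC for virtually poly-cyclic groups.
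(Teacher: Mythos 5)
Your proposal is correct and follows essentially the same route as the paper: induction on $n$, transfer reducibility over $\calf_n$ plus \cite[Theorem~1.1]{Bartels-Lueck(2012annals)}, the transitivity principle, and then the reduction of members of $\calf_n$ to virtually poly-cyclic groups via the inheritance properties from~\cite{Bartels-Farrell-Lueck(2011cocomlat)}. You merely spell out explicitly the step the paper calls ``easy'' (that preimages of virtually cyclic subgroups of the quotient $\prod_i \GL_{n_i}(\IZ)$ are virtually poly-cyclic), which is exactly the intended argument.
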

  
  \begin{proof}
    We proceed by induction over $n$.
    As $\GL_1(\IZ)$ is finite, the initial step of the induction is trivial.

    Since $\GL_n(\IZ)$ is transfer reducible over $\calf_n$ by 
    Theorem~\ref{the:transfer-reducible} it follows 
    from~\cite[Theorem~1.1]{Bartels-Lueck(2012annals)} that
    $\GL_n(\IZ)$ satisfies the  $K$-theoretic FJC 
    up to dimension $1$ with respect to $\calf_n$.
    It remains to replace $\calf_n$ by the family $\VCyc$.
    Because of the Transitivity Principle~\ref{prop:trans-princ} 
    it suffices to show that each $H \in \calf_n$ satisfies 
    the FJC up to dimension $1$
    (with respect to $\VCyc$).
    Combining the induction assumption for $\GL_k(\IZ)$, $k < n$
    with well known inheritance properties for direct products, 
    exact sequences of groups and subgroups 
    (see for example~\cite[Theorem~1.10, Corollary~1.13, Theorem~1.9]
          {Bartels-Farrell-Lueck(2011cocomlat)})
    it is easy to reduce the $K$-theoretic 
    FJC up to dimension $1$ for members of $\calf_n$ to
    the class of virtually poly-cyclic groups.
    Finally, for virtually poly-cyclic groups the FJC
    holds by~\cite{Bartels-Farrell-Lueck(2011cocomlat)}.
  \end{proof}


\section{Strong Transfer Reducibility of \texorpdfstring{$\GL_n(\IZ)$}{GLnZ}}
   \label{sec:Strong-Trans-Red-GLnZ}

  In this section we will discuss the modifications
  needed to extend Proposition~\ref{prop:FJ-for-GL_Z-K-theory-leq-1} 
  to higher $K$-theory. 
  The necessary tools for this extension  
  have been developed by Wegner~\cite{Wegner(2012FJ_CAT0)}.

  \begin{theorem} \label{thm:GLnZ-strongly-trans-red}
     The group $\GL_n(\IZ)$ is strongly transfer reducible over $\calf_n$
     in the sense of~\cite[Definition~3.1]{Wegner(2012FJ_CAT0)}.
  \end{theorem}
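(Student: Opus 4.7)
The plan is to follow the same strategy as in the proof of Theorem~\ref{the:transfer-reducible}, but replacing each input from \cite{Bartels-Lueck(2010CAT(0)flow)} with its stronger counterpart from Wegner~\cite{Wegner(2012FJ_CAT0)}. Strong transfer reducibility in the sense of~\cite[Definition~3.1]{Wegner(2012FJ_CAT0)} differs from ordinary transfer reducibility mainly in that one requires a \emph{sequence} of equivariant covers of $G \times Z$ whose associated contraction parameters tend to zero, rather than a single cover with a single contraction constant. Accordingly, the argument splits into the same two ingredients as before: a long-cover condition at infinity and at periodic flow lines for the flow space $\FS(X)$, together with a contracting transfer statement, now in the stronger sense required by Wegner.

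First, I would verify that $\GL_n(\IZ)$ and $\FS(X)$ satisfy the version of~\cite[Convention~5.1]{Bartels-Lueck(2010CAT(0)flow)} required by Wegner. This is exactly what was observed at the start of the proof of Theorem~\ref{the:transfer-reducible}: the argument of~\cite[Section~6.2]{Bartels-Lueck(2010CAT(0)flow)} applies, using that $\GL_n(\IZ)$ is virtually torsion free to get a uniform bound on the orders of its finite subgroups, despite the action not being cocompact. Next, the long-cover condition at infinity and at periodic flow lines, in the form of Definition~\ref{def:at-infinity_plus_periods}, is already established for $\FS(X)$ in Lemma~\ref{lem:long_coverings}; this ingredient enters Wegner's framework in the same way it enters~\cite[Proposition~5.11]{Bartels-Lueck(2010CAT(0)flow)}, and no modification is needed beyond checking that the parameter $\gamma$ can be chosen freely, which is the case.

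The main step is to upgrade the contracting transfers input from~\cite[Definition~5.9]{Bartels-Lueck(2010CAT(0)flow)} to Wegner's stronger notion. For $\CAT(0)$-groups Wegner constructs, for each sequence of parameters tending to zero, corresponding transfers whose associated contractions tend to zero, using the barycentric construction on the $\CAT(0)$-space together with fine estimates on the flow. Since $\GL_n(\IZ)$ acts properly and isometrically (but not cocompactly) on the proper $\CAT(0)$-space $X$, the geometric input of~\cite[Section~6.4]{Bartels-Lueck(2010CAT(0)flow)} used to verify contracting transfers in the weaker sense extends verbatim to produce contracting transfers in Wegner's stronger sense. The point is that the construction of the transfer space $Z$ and its maps into $\FS(X)$ is local on $X$ and therefore insensitive to cocompactness; only the $\CAT(0)$-geometry of $X$ enters. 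One then invokes Wegner's analogue of~\cite[Proposition~5.11]{Bartels-Lueck(2010CAT(0)flow)} to assemble the long-cover condition and the strengthened contracting transfers into strong transfer reducibility over $\calf_n$.

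The main obstacle is bookkeeping: one must verify that Wegner's transfer construction, which is formulated for $\CAT(0)$-groups acting cocompactly, carries over to the proper but non-cocompact action of $\GL_n(\IZ)$ on $X$. This is the same kind of adaptation that was needed to go from the cocompact case in~\cite{Bartels-Lueck(2010CAT(0)flow)} to Theorem~\ref{the:transfer-reducible}, and the arguments of~\cite[Sections~6.2 and~6.4]{Bartels-Lueck(2010CAT(0)flow)} apply with the same caveats. Granted this, the conclusion follows by combining Lemma~\ref{lem:long_coverings} with Wegner's version of the assembly statement.
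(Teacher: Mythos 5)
Your proposal is correct and follows essentially the same route as the paper: combine Lemma~\ref{lem:long_coverings} and the verification of~\cite[Convention~5.1]{Bartels-Lueck(2010CAT(0)flow)} from the proof of Theorem~\ref{the:transfer-reducible} with Wegner's method, observing that his transfer construction uses only the $\CAT(0)$-geometry of $X$ and not cocompactness. The paper merely pinpoints this more precisely, noting that the sole use of cocompactness in~\cite[Proof of Theorem~3.4]{Wegner(2012FJ_CAT0)} is in verifying the hypotheses of~\cite[Theorem~5.7]{Bartels-Lueck(2010CAT(0)flow)}, which is exactly what Lemma~\ref{lem:long_coverings} and the Convention~5.1 check supply.
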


  Wegner proves in~\cite[Theorem~3.4]{Wegner(2012FJ_CAT0)} that $\CAT(0)$-groups
  are strongly transfer reducible over $\VCyc$.
  As $\GL_n(\IZ)$ does not act cocompactly on $X$, we cannot use Wegner's result
  directly.
  However, in combination with Lemma~\ref{lem:long_coverings}
  his method yields a proof of Theorem~\ref{thm:GLnZ-strongly-trans-red}.

  \begin{proof}[Proof of Theorem~\ref{thm:GLnZ-strongly-trans-red}]
    The only place where Wegner uses cocompactness of the action is
    when he verifies the assumptions 
    of~\cite[Theorem~5.7]{Bartels-Lueck(2010CAT(0)flow)}, 
    see~\cite[Proof of Theorem~3.4]{Wegner(2012FJ_CAT0)}. 
  
    We know by Lemma~\ref{lem:long_coverings} that $FS(X)$  admits 
    long $\calf_n$-covers at infinity and periodic flow lines. 
    Wegner cites~\cite[Subsection~6.3]{Bartels-Lueck(2010CAT(0)flow)} 
    for this assumption. In the cocompact setting the family can even be chosen to be $\VCyc$.
  
    That the assumptions of \cite[Convention~5.1]{Bartels-Lueck(2010CAT(0)flow)}, which are used
    implicitly in \cite[Theorem~5.7]{Bartels-Lueck(2010CAT(0)flow)}, are satisfied has already been
    explained in the proof of Theorem~\ref{the:transfer-reducible}.
  \end{proof}
  
  \begin{theorem} \label{thm:K-FJ-for-GLnZ}
     The $K$-theoretic FJC holds for $\GL_n(\IZ)$.
  \end{theorem}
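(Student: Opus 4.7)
The plan is to mimic the proof of Proposition~\ref{prop:FJ-for-GL_Z-K-theory-leq-1} but with ``transfer reducibility'' upgraded to ``strong transfer reducibility,'' so that we get the full $K$-theoretic FJC rather than only the version up to dimension $1$. I would proceed by induction on $n$; the base case $\GL_1(\IZ) = \{\pm 1\}$ is finite, hence trivial.

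For the inductive step, I would first invoke Theorem~\ref{thm:GLnZ-strongly-trans-red}: $\GL_n(\IZ)$ is strongly transfer reducible over $\calf_n$ in the sense of~\cite[Definition~3.1]{Wegner(2012FJ_CAT0)}. Wegner's main theorem (the strong-transfer-reducibility analogue of~\cite[Theorem~1.1]{Bartels-Lueck(2012annals)}) then yields that $\GL_n(\IZ)$ satisfies the $K$-theoretic FJC with respect to the family $\calf_n$. This is the step that genuinely differs from Proposition~\ref{prop:FJ-for-GL_Z-K-theory-leq-1}: the geometric input (long $\calf_n$-covers at infinity and at periodic flow lines from Lemma~\ref{lem:long_coverings}, contracting transfers, and the Convention~5.1 assumptions from the proof of Theorem~\ref{the:transfer-reducible}) has already been arranged so as to feed into Wegner's stronger machinery.

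Next I would apply the Transitivity Principle~\ref{prop:trans-princ} to the pair $\VCyc \subset \calf_n$: it suffices to verify that every $H \in \calf_n$ satisfies the $K$-theoretic FJC (with respect to $\VCyc$). By the definition of $\calf_n$, every such $H$ is either virtually cyclic (and so covered tautologically) or embeds into a group $K$ fitting into an extension
\[
1 \to P \to K \to \prod_{i=1}^{r} \GL_{n_i}(\IZ) \to 1
\]
with $P$ finitely generated free abelian and each $n_i < n$. Combining the induction hypothesis for $\GL_{n_i}(\IZ)$ with the standard inheritance properties of the $K$-theoretic FJC under passage to subgroups, finite direct products, and group extensions (see~\cite[Theorem~1.10, Corollary~1.13, Theorem~1.9]{Bartels-Farrell-Lueck(2011cocomlat)}), the problem for members of $\calf_n$ reduces to the class of virtually poly-cyclic groups, for which the FJC is already known by~\cite{Bartels-Farrell-Lueck(2011cocomlat)}.

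The main conceptual obstacle has already been overcome in Theorem~\ref{thm:GLnZ-strongly-trans-red}; from this theorem onward the argument is essentially formal, the only subtle point being to ensure that Wegner's theorem indeed applies without cocompactness hypotheses once the long covers and contracting transfers are provided. Given the discussion in the proof of Theorem~\ref{thm:GLnZ-strongly-trans-red}, this is unproblematic, so the inductive argument closes cleanly.
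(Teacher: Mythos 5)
Your proposal is correct and follows the paper's own argument: the paper likewise combines Theorem~\ref{thm:GLnZ-strongly-trans-red} with Wegner's theorem to get the $K$-theoretic FJC for $\GL_n(\IZ)$ relative to $\calf_n$, and then reruns the induction from Proposition~\ref{prop:FJ-for-GL_Z-K-theory-leq-1} (transitivity principle, inheritance properties, reduction to virtually poly-cyclic groups) to replace $\calf_n$ by $\VCyc$. Your write-up simply spells out that induction explicitly, which matches the paper's intent exactly.
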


  \begin{proof}
    Theorem~\ref{thm:GLnZ-strongly-trans-red} together 
    with~\cite[Theorem~1.1]{Wegner(2012FJ_CAT0)} imply that
    $\GL_n(\IZ)$ satisfies the $K$-theoretic FJC
    with respect to the family $\calf_n$.

    Using the induction from the proof of 
    Proposition~\ref{prop:FJ-for-GL_Z-K-theory-leq-1}
    the family $\calf_n$ can be replaced by $\VCyc$.
  \end{proof}
  

\section{Wreath products and transfer reducibility}
   \label{sec:wreath-product_and_transfer-reducibility}

  Our main result in this section is the following
  variation of~\cite[Theorem~1.1]{Bartels-Lueck(2012annals)}.

  \begin{theorem} \label{thm:transfer-red_implies_FICwF}
    Let $\calf$ be a family of subgroups of the group $G$ and let $F$ be a  finite group.
     Denote by $\calf^\wr$ the family of subgroups $H$ of $G \wr F$ that contain
    a subgroup of finite index  that is isomorphic to  
    a subgroup of  $H_1 \x \dots \x H_n$
    for some $n$ and $H_1,\dots,H_n \in \calf$.

   \begin{enumerate} 
    \item \label{thm:transfer-red_implies_FICwF:normal}
    If $G$ is  transfer reducible over $\calf$,
    then  the wreath product $G \wr F$ 
    satisfies the  $K$-theoretic FJC up
    to dimension $1$ with respect to $\calf^\wr$ and
    the $L$-theoretic FJC with respect to $\calf^\wr$;

    \item \label{thm:transfer-red_implies_FICwF:strongly}
    If $G$ is strongly transfer reducible over $\calf$, then $G\wr F$ 
    satisfies the  $K$-theoretic and $L$-theoretic FJC in all dimensions
     with respect to $\calf^\wr$.
   \end{enumerate}
 \end{theorem}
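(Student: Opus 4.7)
The plan is to reduce the theorem to the main theorems of~\cite{Bartels-Lueck(2012annals)} and~\cite{Wegner(2012FJ_CAT0)} by upgrading the (strong) transfer reducibility of $G$ over $\calf$ to (strong) transfer reducibility of $G \wr F = G^F \rtimes F$ over $\calf^\wr$. The natural compact space on which $G \wr F$ acts is $Z^F$, where $Z$ is the compact space appearing in the transfer reducibility of $G$: the subgroup $G^F$ acts coordinatewise and $F$ acts by permutation of factors.

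The main construction goes as follows. Given a finite subset $S \subseteq G \wr F$, project $S$ to each coordinate of $G^F$ to obtain finite subsets $S_1, \ldots, S_{|F|} \subseteq G$, and apply the transfer reducibility of $G$ to $S_1 \cup \cdots \cup S_{|F|}$ to obtain a $G$-equivariant open $\calf$-cover $\calu$ of $G \times Z$ with the required dimension bound $d$ and ``large subset'' property. The products $U_1 \times \cdots \times U_{|F|}$ with $U_i \in \calu$ form a $G^F$-equivariant open cover of $G^F \times Z^F$; taking the $F$-orbit of this collection of products produces a $G \wr F$-equivariant open cover of $(G \wr F) \times Z^F$. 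Its covering dimension is bounded in terms of $d$ and $|F|$, and the ``large subset'' condition for this product cover follows coordinatewise from that of $\calu$.

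Next I would verify that each element of this cover is an $\calf^\wr$-subset. The $G^F$-stabilizer of a product $U_1 \times \cdots \times U_{|F|}$ is the product of the individual $G$-stabilizers of the $U_i$, hence lies in $\calf \times \cdots \times \calf$; the full $G \wr F$-stabilizer contains this product with index at most $|F|$, and therefore belongs to $\calf^\wr$ by definition. With these data in place, \cite[Theorem~1.1]{Bartels-Lueck(2012annals)} applied to $G \wr F$ over $\calf^\wr$ yields part~(i), and \cite[Theorem~1.1]{Wegner(2012FJ_CAT0)} yields part~(ii). The slightly modified notion alluded to at Definition~\ref{def:almost_transfer_reducible} is designed precisely to accommodate this product construction, so that one need not reprove the main theorems of~\cite{Bartels-Lueck(2012annals)} and~\cite{Wegner(2012FJ_CAT0)} from scratch but only check that their proofs go through with the weaker input.

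The main obstacle is the careful handling of the geometric and contracting conditions under the product construction, especially in the strong case. One must fix a suitable product-type metric on $Z^F$ and verify that contracting transfer maps for $G$ assemble coordinatewise into contracting transfer maps for $G \wr F$, keeping track of both the stabilizers, which must remain in $\calf^\wr$, and the metric distortion. The $F$-action by permutation, which is needed to pass from $G^F$-equivariance to $G \wr F$-equivariance, must also be shown not to spoil these geometric properties; here one uses that $F$ is finite, so the $F$-orbit of a collection of subsets only multiplies the covering dimension by a bounded factor. Once these verifications are carried out, both parts of the theorem follow formally by quoting the main theorems of~\cite{Bartels-Lueck(2012annals)} and~\cite{Wegner(2012FJ_CAT0)}.
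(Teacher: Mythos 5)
Your overall strategy coincides with the paper's: replace $X$ by $X^F$, cover $G^F \times X^F$ by the products $V^u = \{(g,x) \mid (g(b),x(b)) \in u(b)\}$, observe that the $G \wr F$-stabilizer of such a set contains $\prod_b G_{u(b)}$ with index at most $|F|$ and hence lies in $\calf^\wr$, accept that the disjointness half of the $\calf$-subset condition is lost, and therefore run the argument through the ``almost transfer reducible'' variant (Definition~\ref{def:almost_transfer_reducible} and Proposition~\ref{prop:transfer-red'_implies_FJC}, proved via barycentric subdivision of the nerve). Two of your steps are stated too loosely, though: the product cover lives on $G^F \times X^F$, not on $(G\wr F) \times X^F$, and ``taking the $F$-orbit'' does not bridge this; one must pull the cover back along the $G\wr F$-equivariant map $p\colon G\wr F \times X^F \to G^F \times X^F$, $(ga,x)\mapsto (g,a\cdot x)$. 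Moreover the homotopy action of $S\subseteq G\wr F$ on $X^F$ is not simply ``coordinatewise plus permutation''; it is the twisted action $[\hat\varphi_{sa}(x)](b)=\varphi_{s(b)}(x(ba))$, and relating its iterated thickenings to those of the coordinatewise action is exactly where the work lies.

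The genuine gap is in the claim that the longness condition ``follows coordinatewise'' from applying transfer reducibility of $G$ to $S_1\cup\dots\cup S_{|F|}$. Longness for the finite set $S\subseteq G\wr F$ requires a single element of the cover to contain the $|S|$-fold iterate $S^{|S|}_{\hat\varphi,\hat H}(g,x)$; after pushing forward with $p$ this forces, in each coordinate, containment of $|S|$-fold iterates of thickenings with respect to the coordinate sets. But the cover you obtain from transfer reducibility applied to $T=S_1\cup\dots\cup S_{|F|}$ is only $T$-long, i.e.\ absorbs $|T|$-fold iterates, and $|T|$ can be exponentially smaller than $|S|$ (for $S=T\wr F$ one has $|S|=|T|^{|F|}\cdot|F|$). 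As written, this step fails. The paper's fix is to first enlarge the given set to one of the form $S\wr F$ and then apply transfer reducibility of $G$ to a finite set $S'\supseteq S$ with $|S'|\ge |S\wr F|$, after which the actual verification of $S\wr F$-longness of the pulled-back cover — via the identities $F_{\bar s\bar a}(\hat\varphi,\hat H)=F_{\bar s}(\hat\varphi|_{G^F},\hat H|_{G^F})\circ l_{\bar a}=l_{\bar a}\circ F_{l_{\bar a^{-1}}(\bar s)}(\hat\varphi|_{G^F},\hat H|_{G^F})$ and the resulting inclusion $p\bigl((S\wr F)^n_{\hat\varphi,\hat H}(A)\bigr)\subseteq (S^F)^n_{\hat\varphi|_{G^F},\hat H|_{G^F}}(p(A))$ — constitutes the bulk of the proof and cannot be waved through. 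Finally, your closing paragraph misidentifies the remaining work in the strong case: no metric on $X^F$ or ``contracting transfer maps'' enter the transfer-reducibility axioms (those belong to the flow-space arguments used to establish transfer reducibility of $\GL_n(\IZ)$ itself); what is actually needed is the analogous bookkeeping for strong homotopy actions together with the strengthened form of Wegner's Proposition~3.6 and the $l^1$-metric comparison of Lemma~\ref{replacing_by_barycentric_subdivision}, so that the argument survives passing to the barycentric subdivision of the nerve.
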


  The idea of the proof of this result is very easy.
  We only need to show that $G \wr F$ is transfer reducible 
  over $\calf^\wr$ and apply~\cite[Theorem~1.1]{Bartels-Lueck(2012annals)}.
  However, it will be easier to verify a slightly weaker condition for $G \wr F$.
 
  \begin{definition}[Homotopy $S$-action; 
         {\cite[Definition~1.4]{Bartels-Lueck(2012annals)}}]
       \label{def:S-action_plus_long-covers}
    Let $S$ be a finite subset of a group $G$ (containing
    the identity element $e \in G$).
    Let $X$ be a space.
    \begin{enumerate}
    \item \label{def:S-action_plus_long-covers:action}
      A \emph{homotopy $S$-action $(\varphi,H)$ on $X$} consists of
      continuous maps $\varphi_g \colon X \to X$
      for $g \in S$ and homotopies
      $H_{g,h} \colon X \times [0,1] \to X$
      for $g,h \in S$ with $gh \in S$ such that
      $H_{g,h}(-,0) = \varphi_g \circ \varphi_h$
      and $H_{g,h}(-,1) = \varphi_{gh}$ holds for $g,h \in S$ with $gh \in S$.
      Moreover, it is  required that $H_{e,e}(-,t) = \varphi_e = \id_X$
      for all $t \in [0,1]$;
     \item \label{def:S-action_plus_long-covers:F}
      For $g \in S$ let $F_g(\varphi,H)$ be the set of all
      maps $X \to X$ of the form $x \mapsto H_{r,s}(x,t)$
      where $t \in [0,1]$ and $r,s \in S$ with $rs = g$;
     \item \label{def:S-action_plus_long-covers:S(g,x)}
      Given  a subset $A\subset G\times X$ let $S^1_{\varphi,H}(A)
      \subset G \times X$ denote the set
      \begin{multline*}
       \hspace{10mm} \{(g a^{-1} b,y)\mid  \exists \; x \in X, \;  a,b\in
       S, \; f \in F_{a}(\varphi,H), \; \tilde{f} \in F_{b}(\varphi,H)
       \\
       \text{satisfying} \; (g,x) \in A, \; f(x) = \tilde{f}(y) \}.
     \end{multline*}
      Then define inductively $S^n_{\varphi,H}(A):=S^1_{\varphi,H}(S^{n-1}_{\varphi,H}(A))$; 
     \item \label{def:S-action_plus_long-covers:long}
      Let $(\varphi,H)$ be a homotopy $S$-action on $X$
      and $\calu$ be an open cover of $G \times X$.
      We say that $\calu$ is \emph{$S$-long with respect to $(\varphi,H)$}
      if for every $(g,x) \in G \times X$ there is $U \in \calu$ containing
      $S^{|S|}_{\varphi,H}(g,x)$ where $|S|$ is the cardinality of $S$.
    \end{enumerate}
  \end{definition}

  We will use the following variant of~\cite[Definition~1.8]{Bartels-Lueck(2012annals)}. 
  \begin{definition}[Almost transfer reducible] 
     \label{def:almost_transfer_reducible}
    Let $G$ be a group and $\calf$ be a family of subgroups.
    We will say that $G$ is \emph{almost transfer reducible} over $\calf$ if
    there is a number $N$ such that for any
    finite subset $S$ of $G$ we can find
    \begin{enumerate}
     \item a contractible, compact, controlled $N$-dominated,
      metric space $X$ (\cite[Definition~0.2]{Bartels-Lueck(2010CAT(0)flow)}),
      equipped with a homotopy $S$-action $(\varphi,H)$ and
      \item a $G$-invariant cover $\calu$ of $G \times X$ of dimension
      at most $N$ that is $S$-long. 
      Moreover we require that for all $U \in \calu$ the subgroup
      $G_U := \{ g \in G \mid gU = U \}$ belongs to $\calf$.
      (Here we use the $G$-action on $G \x X$  given 
         by $g \cdot (h,x) = (gh,x)$.)
    \end{enumerate}
  \end{definition}
  
  The original definition for transfer reducibility requires in addition
  that $gU$ and  $U$ are disjoint if $U \in \calu$ and $g \notin G_U$,
  in other words each $U$ is required to be an $\calf$-subset.
  One can also drop this condition from the 
  notion of ``strongly transfer reducible'' 
  introduced in~\cite[Definition~3.1]{Wegner(2012FJ_CAT0)}. 
  A group satisfying this weaker version will be called 
  \emph{almost strongly transfer reducible}.

  The result corresponding to~\cite[Theorem~1.1]{Bartels-Lueck(2012annals)}  
  (respectively \cite[Theorem~1.1]{Wegner(2012FJ_CAT0)})
  is as follows.

  \begin{proposition} \label{prop:transfer-red'_implies_FJC} 
    Let $\calf$ be a family of subgroups of a group $G$ and let $\calf'$  
    be the family of subgroups of $G$ that contain 
    a member of $\calf$ as a finite index subgroup.
  \begin{enumerate}
  \item \label{prop:transfer-red'_implies_FJC:normal}  
   If $G$ is almost transfer reducible over $\calf$,
    it satisfies the  $K$-theoretic FJC up
    to dimension $1$ with respect to $\calf'$ and
    the $L$-theoretic FJC with respect to $\calf'$.
  \item \label{prop:transfer-red'_implies_FJC:strongly}  
   If $G$ is almost strongly transfer reducible
    over $\calf$, it satisfies the  $K$-theoretic FJC 
    with respect to $\calf'$ and
    the $L$-theoretic FJC with respect to $\calf'$.
  \end{enumerate}
  \end{proposition}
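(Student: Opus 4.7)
\emph{Proof sketch.} The plan is to reduce both parts to the corresponding results in~\cite{Bartels-Lueck(2012annals)} and~\cite{Wegner(2012FJ_CAT0)} by showing that almost transfer reducibility (respectively almost strong transfer reducibility) of $G$ over $\calf$ implies (strong) transfer reducibility of $G$ over the enlarged family $\calf'$. Once this implication is in hand, part~\ref{prop:transfer-red'_implies_FJC:normal} follows from~\cite[Theorem~1.1]{Bartels-Lueck(2012annals)} applied with $\calf'$ in place of $\calf$, and part~\ref{prop:transfer-red'_implies_FJC:strongly} follows analogously from~\cite[Theorem~1.1]{Wegner(2012FJ_CAT0)}.

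To upgrade an almost structure to a genuine one, from the given cover $\calu$ of $G\times X$ (with $\dim\calu \le N$, $G_U \in \calf$, and $S$-long) I would construct a $G$-invariant cover $\tilde\calu$ whose members are $\calf'$-subsets while preserving the dimension bound and the $S$-long condition. The construction: on each $G$-orbit $G\cdot U = \{gU : g \in G\}$ introduce the equivalence relation generated by declaring $gU \sim g'U$ whenever $gU \cap g'U \ne \emptyset$. Writing $[U]$ for the equivalence class of $U$ and setting
\[
 \tilde U := \bigcup_{U' \in [U]} U', \qquad \tilde\calu := \{\tilde U : U \in \calu\},
\]
a direct check yields $\widetilde{gU} = g\tilde U$, so $g\tilde U \cap \tilde U \ne \emptyset$ forces $g\tilde U = \tilde U$ (distinct equivalence classes give disjoint unions), and moreover $G_{\tilde U}$ contains $G_U$ as a subgroup of index $|[U]|$. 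The $G$-invariance of $\tilde\calu$, the fact that it still covers $G\times X$, and the inheritance of the $S$-long property are then automatic from $\tilde U \supseteq U$.

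The two remaining points requiring genuine work are (a)~finiteness of each equivalence class $[U]$, which ensures $G_{\tilde U} \in \calf'$, and (b)~a bound on $\dim\tilde\calu$ in terms of $N$. The main obstacle will be~(a): a chain of pairwise overlapping distinct translates of $U$ need not be short in general, and there is no automatic compactness of the $\calu$-nerve to exploit. To handle this I would use the concrete structure of the almost transfer reducibility data --- finiteness of $S$, the homotopy $S$-action, and controlled $N$-domination of the compact space $X$ --- to obtain a combinatorial bound on chain length. If a clean uniform bound along these lines proves elusive, the fallback is to modify the proofs of the cited theorems directly, tracing each invocation of the $\calf$-subset disjointness condition and verifying that relaxing it to the weaker ``stabilizer in $\calf$'' condition only forces the enlargement of the family to $\calf'$ in the conclusion.
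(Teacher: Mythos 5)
There is a genuine gap, and it sits exactly at the point you flag as ``the main obstacle'': the finiteness of the equivalence classes $[U]$. Nothing in the almost transfer reducibility data rules out an infinite chain $U, g_1U, g_2U,\dots$ of pairwise distinct translates in one orbit, each overlapping the next. Finiteness of $S$, the homotopy $S$-action and the controlled $N$-domination of the compact space $X$ control the \emph{multiplicity} of the cover (and hence, as you correctly observe, the dimension of $\tilde\calu$ survives) and its longness, but they carry no information whatsoever about the intersection pattern of the translates $gU$ along an orbit; indeed ``almost'' transfer reducibility is introduced precisely because in the relevant construction (the pulled-back cover $\hat\calu$ on $G\wr F\times X^F$ in the proof of Theorem~\ref{thm:transfer-red_implies_FICwF}) one cannot arrange disjointness of translates, and there is no reason the overlap graph on an orbit has finite components. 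When a class is infinite, $G_{\tilde U}$ contains $G_U$ with infinite index and need not lie in $\calf'$, so the merged cover is not an $\calf'$-cover and the reduction to the original theorems of Bartels--L\"uck and Wegner collapses. Since you offer no argument for (a) beyond the hope of ``a combinatorial bound'', the primary route of the proposal does not go through.

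Your fallback --- trace where the disjointness hypothesis enters the cited proofs and weaken the conclusion to $\calf'$ --- is in fact the route taken in the paper, but the essential mechanism is missing from your sketch. The disjointness of translates is used only to ensure that the $G$-action on the nerve $\Sigma$ of $\calu$ is cell preserving with cell stabilizers in $\calf$. Without it the action is still simplicial, and the fix is to pass to the barycentric subdivision $\Sigma'$: by Lemma~\ref{replacing_by_barycentric_subdivision}~\ref{replacing_by_barycentric_subdivision:isotropy} the action on $\Sigma'$ is cell preserving and its isotropy groups (simplex stabilizers of $\Sigma$) contain the pointwise stabilizers, which lie in $\calf$, with finite index --- this is exactly where $\calf'$ enters. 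The second, equally necessary ingredient is metric control: the proofs contract distances measured in the $l^1$-metric on the nerve, and subdivision changes this metric, so one needs Lemma~\ref{replacing_by_barycentric_subdivision}~\ref{replacing_by_barycentric_subdivision:l1-metric} (and, for Wegner's argument, the strengthened contraction estimate~\eqref{eq:add-eps-to-Wegners-3.6} with the extra constant $M$) to see that the maps into $\Sigma'$ still have the required contracting property. Without identifying barycentric subdivision and this $l^1$-metric comparison, the fallback remains a programme rather than a proof.
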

  
  \begin{proof}~\ref{prop:transfer-red'_implies_FJC:normal}
    The proof can be copied almost word for word from the 
    proof of~\cite[Theorem~1.1]{Bartels-Lueck(2012annals)}.
    The only difference is that we no longer know that the isotropy
    groups of the action of $G$ on the geometric realization $\Sigma$ of
    the nerve of $\calu$ belong to $\calf$,  and this action may no longer
    be cell preserving. 
   But it is still simplicial and therefore we can just replace $\Sigma$ 
   by its barycentric subdivision by the following 
   Lemma~\ref{replacing_by_barycentric_subdivision} provided  
   we replace $\calf$ by $\calf'$.
    (The precise place where this makes a difference is in the proof
    of~\cite[Proposition~3.9]{Bartels-Lueck(2012annals)}.)
   \\[1ex]~\ref{prop:transfer-red'_implies_FJC:strongly} 
   The L-theory part follows from 
   part~\ref{prop:transfer-red'_implies_FJC:normal}   
   since almost strongly transfer reducible implies almost transfer reducible. 

   For the proof of the $K$-theory part we have to adapt Wegner's proof
   of~\cite[Theorem~1.1]{Wegner(2012FJ_CAT0)}.
   The necessary changes concern~\cite[Proposition~3.6]{Wegner(2012FJ_CAT0)}
   and  are  similar to the changes discussed above,
   but as Wegner's argument is somewhat differently organized
   we have to be a little more careful.
   First we observe that for any fixed $M > 0$ the second assertion
   in~\cite[Proposition~3.6]{Wegner(2012FJ_CAT0)} can be strengthened to
   \begin{equation} \label{eq:add-eps-to-Wegners-3.6}
    M k \cdot d_\Sigma^1(f(g,x),f(h,y)) \leq 
            d_{\Psi,S,k,\Lambda}( (g,x), (h,y) )  \; \text{for all}
        (g,x),(h,y) \in G \x X.
   \end{equation} 
   To do so we only need to set $n := M \cdot 4Nk$
   instead of $4Nk$ in the second line of Wegner's proof;
   then $k$ can be replaced by $M \cdot k$ in the denominator
   of the final expression on p.786.
   (Of course $X$, $\Psi$, $\Lambda$, $\Sigma$ and $f$
   depend now also on $M$.)
   Then we can replace $\Sigma$ by its barycentric subdivision $\Sigma'$.
   Using Lemma~\ref{replacing_by_barycentric_subdivision}
   we conclude from~\eqref{eq:add-eps-to-Wegners-3.6} with
   sufficiently large $M > 0$ that
   \begin{itemize}
   \item $d^1_{\Sigma'}(f(g,x),f(h,y)) \leq \frac{1}{k}$ for all
        $(g,x), (h,y) \in G \x X$ satisfying the inequality
        $d_{\Psi,S,k,\Lambda}( (g,x), (h,y) ) \leq k$.
   \end{itemize}
   This assertion still guarantees that the maps $(f_n)$ induce a 
   functor as needed on the right hand side of the diagram 
   on p.789 of~\cite{Wegner(2012FJ_CAT0)}. 
   With this change Wegner's argument proves the $K$-theory
   part of~\ref{prop:transfer-red'_implies_FJC:strongly}.
  
   (Alternatively one can strengthen 
   Lemma~\ref{replacing_by_barycentric_subdivision} below and 
   check that  the $l^1$-metric
   under barycentric subdivision changes only up to
   Lipschitz equivalence.
   Thus~\cite[Proposition~3.6]{Wegner(2012FJ_CAT0)} remains in fact true
   for $\Sigma'$.)  
  \end{proof}

  \begin{lemma} \label{replacing_by_barycentric_subdivision} 
    Let $G$ be a group,
    and let $\calf $ be the family of subgroups of $G$.  Let $\calf'$ be the
    family of subgroups of $G$ that contain a member of $\calf$ as a finite
    index subgroup.  Let $\Sigma$ be a simplicial complex with a simplicial
    $G$-action such that the isotropy group of each vertex is contained in
    $\calf$. Let $\Sigma'$ be the barycentric subdivision. Denote by
    $d^1_{\Sigma}$ the $l^1$-metric on $\Sigma$ and by $d^1_{\Sigma'}$ the
    $l^1$-metric on $\Sigma'$.
    \begin{enumerate}
    \item \label{replacing_by_barycentric_subdivision:isotropy}
      The group $G$ acts cell preserving on $\Sigma'$. 
      All isotropy groups of $\Sigma'$ lie in $\calf'$.
      In particular $\Sigma'$ is a $G$-$CW$-complex whose isotropy 
      groups belong to $\calf'$;

    \item \label{replacing_by_barycentric_subdivision:l1-metric} 
       Given a number $\epsilon' > 0$ and a natural number $N$, 
       there exists a number $\epsilon > 0$ depending only on 
       $\epsilon'$ and $N$ such that the following holds: If
       $\dim(\Sigma) \le N$ and $x,y \in \Sigma$ satisfy 
       $d^1_{\Sigma}(x,y) < \epsilon$, then $d^1_{\Sigma'}(x,y) < \epsilon'$.
    \end{enumerate}
  \end{lemma}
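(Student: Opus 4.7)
For part~(i) my plan is to analyze cell stabilizers in $\Sigma'$ directly. Every cell of $\Sigma'$ has vertex set $\{b_{\sigma_0},\dots,b_{\sigma_k}\}$ for a strict chain $\sigma_0\subsetneq\sigma_1\subsetneq\dots\subsetneq\sigma_k$ of simplices of $\Sigma$, and any $g\in G$ that preserves this vertex set must permute the $b_{\sigma_i}$; since it has to preserve the inclusion order it must fix each $b_{\sigma_i}$. This gives the cell-preserving property and identifies the cell stabilizer with $\bigcap_i G_{\sigma_i}^{\mathrm{set}}$, where $G_\sigma^{\mathrm{set}}:=\{g\in G: g\sigma=\sigma\}$. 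Each $G_\sigma^{\mathrm{set}}$ contains the pointwise stabilizer $G_\sigma^{\mathrm{pt}}=\bigcap_{v\in\sigma}G_v$ as a subgroup of index at most $(\dim\sigma+1)!$. By hypothesis every $G_v$ is in $\calf$; since $\calf$ is closed under subgroups, it is closed under intersections, so $G_\sigma^{\mathrm{pt}}\in\calf$. Hence $G_{\sigma_k}^{\mathrm{pt}} \subseteq \bigcap_i G_{\sigma_i}^{\mathrm{set}} \subseteq G_{\sigma_k}^{\mathrm{set}}$ with $[G_{\sigma_k}^{\mathrm{set}}:G_{\sigma_k}^{\mathrm{pt}}]<\infty$, so the cell stabilizer lies in $\calf'$.

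For part~(ii) the idea is to represent the $\Sigma'$-barycentric coordinates via a layer-cake formula and then estimate. For $x\in |\Sigma|$ with $\Sigma$-coordinates $(t^x_v)_{v\in V}$, introduce the level sets $L_\alpha(x):=\{v\in V:t^x_v\ge \alpha\}$ for $\alpha>0$. Each $L_\alpha(x)$ is a face of the carrier of $x$ and hence a simplex of $\Sigma$. Viewing $|\Sigma|$ as a subset of $\IR^V$, the identity
\[
x \;=\; \int_0^\infty \sum_{v\in L_\alpha(x)} e_v\,d\alpha \;=\; \int_0^\infty |L_\alpha(x)|\,b_{L_\alpha(x)}\,d\alpha,
\]
where $b_\tau=\frac{1}{|\tau|}\sum_{v\in\tau}e_v\in\IR^V$ is the barycenter of $\tau$, rewrites as $x=\sum_\tau c_\tau(x)\,b_\tau$ with $c_\tau(x):=|\tau|\cdot\mathrm{Leb}\{\alpha>0:L_\alpha(x)=\tau\}$. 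A short bookkeeping argument, ordering the vertices of the carrier simplex by decreasing value of $t^x_v$, shows that the support of $c_\tau(x)$ is a chain of faces, so the $c_\tau(x)$ are precisely the $\Sigma'$-barycentric coordinates of $x$.

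Given this formula the estimate is one line:
\[
d^1_{\Sigma'}(x,y) = \sum_\tau |c_\tau(x)-c_\tau(y)| \le \int_0^\infty \sum_\tau |\tau|\,\bigl|\mathbf{1}_{L_\alpha(x)=\tau} - \mathbf{1}_{L_\alpha(y)=\tau}\bigr|\,d\alpha.
\]
For each $\alpha$ the inner sum has at most two nonzero terms, each of size at most $N+1$ since $\dim \Sigma \le N$, so it is bounded by $2(N+1)\,\mathbf{1}_{L_\alpha(x)\ne L_\alpha(y)}$. Finally, $\{\alpha:L_\alpha(x)\ne L_\alpha(y)\}\subseteq\bigcup_v (\min(t^x_v,t^y_v),\max(t^x_v,t^y_v)]$, a set of Lebesgue measure at most $\sum_v|t^x_v-t^y_v|=d^1_\Sigma(x,y)$. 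Combining, $d^1_{\Sigma'}(x,y)\le 2(N+1)\,d^1_\Sigma(x,y)$, and we can take $\epsilon:=\epsilon'/(2(N+1))$. The main technical hurdle is verifying the layer-cake representation really gives the $\Sigma'$-coordinates; once this is in place, the estimate is clean measure theory. Notably the proof yields Lipschitz equivalence of the two metrics, strictly stronger than the uniform-continuity statement requested.
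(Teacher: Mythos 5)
Your proof is correct, and for assertion~\ref{replacing_by_barycentric_subdivision:l1-metric} it takes a genuinely different route from the paper. Assertion~\ref{replacing_by_barycentric_subdivision:isotropy} is left to the reader in the paper, and your argument is the intended elementary one: an element stabilizing a simplex of $\Sigma'$ permutes the barycenters of a chain $\sigma_0\subsetneq\dots\subsetneq\sigma_k$, preserves dimensions and hence fixes each $b_{\sigma_i}$, and the resulting isotropy group is sandwiched between the pointwise stabilizer of $\sigma_k$ (a subgroup of a vertex stabilizer, hence in $\calf$) and its setwise stabilizer, so it lies in $\calf'$. For assertion~\ref{replacing_by_barycentric_subdivision:l1-metric} the paper embeds $\Sigma$ isometrically (with respect to $d^1$) into the full complex $\hat{\Sigma}$ on the same vertex set, observes that the images of $x,y$ lie in a common simplex of dimension at most $2N+1$, and concludes by a compactness argument on the standard simplex; this gives uniform continuity but no explicit modulus. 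You instead identify the $\Sigma'$-coordinates via the level sets $L_\alpha(x)=\{v\mid t^x_v\ge\alpha\}$ through the layer-cake formula $c_\tau(x)=|\tau|\cdot\mathrm{Leb}\{\alpha>0\mid L_\alpha(x)=\tau\}$ (which is correct and agrees with the usual description obtained by ordering the coordinates decreasingly), and deduce the explicit bound $d^1_{\Sigma'}(x,y)\le 2(N+1)\, d^1_{\Sigma}(x,y)$, so that $\epsilon=\epsilon'/(2(N+1))$ works. This quantitative version is strictly stronger than what the paper's compactness argument yields and is exactly the kind of strengthening the authors allude to in the parenthetical remark at the end of the proof of Proposition~\ref{prop:transfer-red'_implies_FJC}, where a Lipschitz comparison of the $l^1$-metrics under barycentric subdivision is suggested as an alternative. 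One small caveat: as written you only prove the Lipschitz inequality in one direction, which is all the lemma requires; genuine Lipschitz \emph{equivalence} would also need the (easy) converse estimate $d^1_{\Sigma}(x,y)\le d^1_{\Sigma'}(x,y)$, which follows from $t^x_v=\sum_{\tau\ni v}c_\tau(x)/|\tau|$.
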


  \begin{proof} The elementary proof of
   assertion~\ref{replacing_by_barycentric_subdivision:isotropy} 
   is left to the reader.  
   The proof of
   assertion~\ref{replacing_by_barycentric_subdivision:l1-metric} is an obvious
   variation of the proof of~\cite[Lemma~9.4~(ii)]{Bartels-Lueck(2012annals)}.
   Namely, any simplicial complex can be equipped with the $l^1$-metric.  With
   those metrics the inclusion of a subcomplex is an isometric embedding.  Let
   $\hat{\Sigma}$ be the simplicial complex with the same vertices as $\Sigma$,
   such that any finite subset of the vertices spans a simplex.  The inclusions
   $i\colon \Sigma \hookrightarrow \hat{\Sigma}$ and $i'\colon \Sigma'
   \hookrightarrow \hat{\Sigma}'$ are isometric embeddings.  The images
   $i(x),i(y)$ of any two points $x,y\in \Sigma$ are contained in a closed
   simplex of dimension at most $2N+1$.  Thus it suffices to consider the case
   where $\Sigma$ is replaced by the standard $(2N+1)$-simplex.  A compactness
   argument gives the result in that case.
  \end{proof}

  \begin{remark}
    Often the family $\calf$ is closed under finite overgroups.
    In this case $\calf = \calf'$ and it is really easier to
    work with the weaker notion of almost transfer reducible instead of
    transfer reducible.
    However, there are situations in which a family that is not
    closed under finite overgroups is important.
    For example, in~\cite{Davis-Quinn-Reich(2011)} the family
    of virtually cyclic groups of type $I$ is considered. 
  \end{remark}
 
  Let $G$ be a group and $F$ be finite group.
  We will think of elements of the $F$-fold product $G^F$ 
  as functions $g \colon F \to G$.
  For $a \in F$, $g \in G^F$ we write $l_a(g) \in G^F$ for the function
  $b \mapsto g(ba)$; this defines a left action of $F$ on $G^F$ and the
  corresponding semi-direct product is the wreath product $G \wr F$
  with multiplication $ga g' a' = g l_a(g') a a'$ for $g$, $g' \in
  G^F$ and $a$, $a' \in F$.
  If $G$ acts on a set $X$, then we obtain an action
  of $G \wr F$ on $X^F$. 
  In formulas this action is given by  
  \begin{eqnarray*}
    (g \cdot x)(b) & := & g(b) \cdot x(b); \\ 
    (a \cdot x)(b) & := & x(ba),
  \end{eqnarray*}
  and hence
\[
 (ga \cdot x)(b) = g(b) \cdot x (ba)
\]
  for $g \in G^F$, $x \in X^F$ and $a,b \in F$.
  We will sometimes also write $l_a(x)$ for $a \cdot x$.
  Let now $S \subset G$ and $(\varphi,H)$ be a homotopy $S$-action on
  a space $X$.
  Set $S \wr F := \{ s a \mid s \in S^F, a \in F \} \subset G \wr F$.  
  Then we obtain a homotopy $S \wr F$-action $(\hat \varphi, \hat H)$ 
  on $X^F$.
  In formulas this is given by
  \begin{eqnarray*}
     [ \hat \varphi_{sa} (x) ] \, (b) & := & \varphi_{s(b)} (x(ba)); \\ {}
     [ \hat H_{sa,s'a'}(x,t) ]  \, (b) & := & H_{s(b),s'(ba)} (x(baa'),t), 
  \end{eqnarray*}
  for $t \in [0,1]$, $s,s' \in S^F$, $a,a',b \in F$ with 
  $sas'a' = s \, l_a(s') a a' \in S\wr F$. 
Hence $(s \, l_a(s')(b)=s(b)s'(ba)\in S$ and the right hand side is defined.
  It is easy to check that if $X$ is a contractible, compact, controlled 
  $N$-dominated, metric space, then the same is true for $X^F$ provided we
  replace $N$ by $N \cdot |F|$.
  \begin{proof}[Proof of Theorem~\ref{thm:transfer-red_implies_FICwF}]
    Let us postpone the ``strong''-case until the end of the proof.
     Because of Proposition~\ref{prop:transfer-red'_implies_FJC} it suffices to
     show that $G \wr F$ is almost transfer reducible over $\calf^\wr$.      
     Let $\hat S$ be a finite subset  of $G\wr F$. 
     By enlarging it we can assume that it has the form
     $S \wr F$ for some finite subset $S\subset G$. 
     Pick a finite subset $S'\subset G$ such that 
     $S\subset S'$ and $|S'|\ge |S\wr F|$. 
     (If $G$ is finite, then $G \wr F \in \calf^\wr$ and there is
     nothing to prove.)
     As $G$ is transfer reducible and hence in particular almost transfer reducible, there is a number $N$, 
     (depending only on $G$, not on $\hat S$ or $S$)
     a compact, contractible, controlled $N$-dominated, metric space $X$,
     a homotopy $S'$-action $(\varphi,H)$ on $X$, and a $G$-invariant 
     $S'$-long open cover $\calu$ of $G \x X$ of dimension at most $N$ 
     such that for all $U \in \calu$ we have $G_U \in \calf$.
     As pointed out before, $X^F$ is a compact, contractible, 
     controlled $N \cdot |F|$-dominated, metric space. 
     For $u \colon F \to \calu$, let 
     $V^u := \{ (g,x) \in G^F \x X^F \mid 
                                            (g(b),x(b)) \in u(b) \; \text{for all} \; b \in F \}$.
     We obtain an open cover 
     $\calv := \{ V^u \mid u \colon F \to \calu \}$ of $G^F \x X^F$
     of dimension at most $(N+1)^{|F|} - 1$. 
     
     This cover is invariant for the $G \wr F$-action defined by
     \begin{eqnarray*}
        [g \cdot ( h ,x)] \, (b) & := & (g(b)h(b),x(b));\\ {}
        [a \cdot ( h ,x)] \, (b) & := & (h(ba),x(ba)), 
     \end{eqnarray*}
     for $g,h \in G^F$, $x \in X^F$ and $a,b \in F$.
     As we have $(G^F)_{V^u} = \prod_{b \in F} G_{u(b)}$,
     it follows that $(G \wr F)_{V} \in \calf^\wr$ for all $V \in \calv$.
     Now we pull back $\calv$ to a cover 
     $\hat \calu := \{ p^{-1}(V) \mid V \in \calv \}$ of $G \wr F \x X^F$
     along the $G \wr F$-equivariant map 
     $p \colon G \wr F \x X^F \to G^F \x X^F$, $(ga,x) \mapsto (g,a
     \cdot x)$. Here $G \wr F$ operates on $G \wr F \times X^F$ via
     left multiplication on the first factor and on $G^F \times X^F$
     via the operation defined above.

     The definition of the homotopy $S\wr F$-action on $X^F$ gives 
     \[ 
     \hat H_{sa,s'a'}(-,t) = 
      \hat H_{s,l_{a}(s')} (-,t) \circ l_{aa'} = l_{aa'}
       \circ \hat H_{l_{{a'}^{-1}a^{-1}}(s),l_{{a'}^{-1}}(s')} (-,t)\]
     for $s,s'\in S^F$, $a,a'\in F$. 
     For $\bar s := s \; l_a(s')$, $\bar a := a a'$ we have 
     $\bar s \bar a = s a s' a'$ and consequently 
     \begin{equation} \label{eq:F}
     F_{\bar s \bar a}(\hat \varphi, \hat H) 
      = F_{\bar s}(\hat \varphi|_{G^F}, \hat H|_{G^F}) \circ l_{\bar a} =
      l_{\bar a} \circ 
       F_{l_{\bar a^{-1}}(\bar s)}(\hat \varphi|_{G^F}, \hat H|_{G^F}).
     \end{equation}
     Let us insert this into the definition of 
     $(S\wr F)^1_{\hat \varphi,\hat H}(hb,x)$ 
     with $h\in G^F$, $b\in F$, $x \in X^F$. 

     Pick any $(h'b',x')\in (S\wr F)^1_{\hat \varphi,\hat H}(hb,x)$ 
     with $h'\in G^F$, $b'\in F$. 
     Then there are elements
     $s,s'\in S^F$, $a,a'\in F$ and 
     $f\in F_{sa}(\hat \varphi, \hat H)$,
     $\tilde{f}\in F_{s'a'}(\hat \varphi, \hat H)$ such that 
     \[
     f(x) = \tilde{f}(x'), \quad \text{and} \quad
     h'b'=hb(sa)^{-1}s'a' 
     \]    
     Using the first equality in~\eqref{eq:F}  we find 
     $\bar f \in F_{s}(\hat \varphi|_{G^F},\hat H|_{G^F})$
     such that $f = \bar f \circ l_a$.
     Using the second equality in~\eqref{eq:F}  we find
     $f' \in F_{l_{ba^{-1}}(s)}(\hat \varphi|_{G^F},\hat H|_{G^F})$
     such that $\bar f \circ l_{ab^{-1}} = l_{ab^{-1}} \circ f'$.
     Then $f \circ l_{b^{-1}} = l_{ab^{-1}} \circ f'$; equivalently
     $l_{ba^{-1}} \circ f = f' \circ l_b$.
     Similarly, using both equations in~\eqref{eq:F} again,
     we find
     $\tilde f' \in F_{l_{ba^{-1}}(s')}(\hat \varphi|_{G^F},\hat H|_{G^F})$
     such that $l_{ba^{-1}} \circ \tilde f = \tilde f' \circ l_{ba^{-1}a'}$.
     
     We claim that $p(h'b',x')$ belongs to 
     $(S^F)^1_{\hat \varphi|_{G^F},\hat H|_{G^F}}(p(hb,x))$.  
     We have $p(h'b',x') = (h',b'\cdot x')$ and
     $p(hb,x) = (h,b \cdot x)$.   
     From $h'b'=hb(sa)^{-1}s'a'$ we conclude
     $h' = h \, l_{b a^{-1}} ( s^{-1} s' )$ and $b' = b a^{-1} a'$.
     Now the equations
     \begin{eqnarray*}
       f'(b \cdot x) & = & l_{ba^{-1}}(f(x)) = l_{ba^{-1}} (\tilde f (x') )
           = \tilde f' (b a^{-1} a' \cdot x' ) = \tilde f' ( b' x') \\
       h'  & = & h l_{ba^{-1}}(s^{-1} s') 
             = h (l_{ba^{-1}}(s))^{-1} l_{ba^{-1}}(s')  
     \end{eqnarray*}
     prove our claim.

     Summarizing  we have shown that for any $A\subset G\wr F\times X^F$
     we have
     \[
     p((S\wr F)^1_{\hat \varphi,\hat H}(A)) 
         \subset (S^F)^1_{\hat \varphi|_{G^F},\hat H|_{G^F}}(p(A)).
     \]
     By induction then for all $n$
     \[
     p((S\wr F)^{n}_{\hat \varphi,\hat H}(A)) 
         \subset (S^F)^{n}_{\hat \varphi|_{G^F},\hat H|_{G^F}}(p(A)).
     \]
     Since the $S^F$-homotopy action on $X^F$ is defined componentwise we 
     have 
     \[
      (S^F)^n_{\hat \varphi|_{G^K},\hat H|_{G^K}}(g,x) 
                \subset \prod_{a\in F} S^n_{\varphi,H}(g(a),x(a)).
     \]
     Recall the definition of $S'$ from the beginning of the proof.
     Since the cover $\calu$ of $G\times X$ is $S'$-long, there is 
     for each $a\in F$ a $u(a)\in \calu$ with 
     $(S')^{|S'|}_{\varphi,H}(h(a),x(ab))\subset u(a)$. 
     Thus we obtain
     \begin{eqnarray*} 
      p((S\wr F)^{|S\wr F|}_{\hat \varphi,\hat H}(hb,x))
      &\subset & 
      (S^F)^{|S\wr F|}_{\hat \varphi|_{G^F},\hat H|_{G^F}}(h,b\cdot x) 
          \subset \prod_{a\in F} S^{|S\wr F|}_{\varphi,H}(h(a),x(ab))
      \\ 
      &\subset& 
      \prod_{a\in F} S'^{|S'|}_{\varphi,H}(h(a),x(ab)) \subset \prod_{a\in F} u(a)        = V^u.
     \end{eqnarray*}
     So $(S\wr F)^{|S\wr F|}_{\hat \varphi,\hat H}(hb,x)\subset p^{-1}(V^u)$. 
     Hence the cover $\hat \calu$ is $S\wr F$-long. 
      
      If $X$ is equipped with a strong homotopy action $\Psi$ 
      (see \cite[Section~2]{Wegner(2012FJ_CAT0)}),
      we obtain a strong homotopy action $\hat \Psi$ on $X^F$. 
      In formulas it is given by
      \begin{multline*}
      \hat \Psi(g_na_n,t_n,\ldots,t_1,g_0a_0,x)(b)
      \\
      \quad  \quad := 
      \Psi(g_n(b),t_n,g_{n-1}(ba_n),t_{n-1},g_{n-2}(ba_na_{n-1}), \ldots,g_0(ba_n\ldots a_1),x(ba_n\ldots a_0))
    \end{multline*}
     for $g_0,\ldots, g_n\in G^F, a_0,\ldots,a_n,b\in F,t_1,\ldots, t_n\in [0,1],x\in X^F$. We also have here 
      \begin{eqnarray*}
     \lefteqn{\hat \Psi(g_na_n,t_n,\ldots,t_1,g_0a_0,-)}
    & &
    \\
     &=& 
    \hat \Psi(g_n,t_n,l_{a_n}(g_{n-1}),t_{n-1},l_{a_na_{n-1}}(g_{n-2}),t_{n-2},\ldots,l_{a_n\ldots a_1}(g_0),-)\circ l_{a_n\ldots a_0}
     \\
    &=& 
    l_{a_n\ldots a_0} \circ \hat \Psi(l_{(a_n\ldots a_0)^{-1}}g_n,t_n,l_{(a_{n-1}\ldots a_0)^{-1}}(g_{n-1}),t_{n-1},\ldots,\ignore{l_{(a_1a_0)^{-1}}(g_1),t_1,}l_{a_0^{-1}}(g_0),-).
      \end{eqnarray*}
	For $\bar g:= g_nl_{a_n}(g_{n-1})l_{a_na_{n-1}}(g_{n-2})\ldots
        l_{a_n\ldots a_1}(g_0) \in G^F,\bar a := a_n\ldots a_0\in
        F,n\in \IN$ we have 	$\bar g \bar
        a=g_na_ng_{n-1}a_{n-1}\ldots g_0a_0$  and consequently 
we have analogously to \eqref{eq:F}
      \[F_{\bar g \bar a}(\hat \Psi, S\wr F, n)= F_{\bar g}(\hat \Psi|_{G^F}, S^F, n)\circ l_{\bar a} = l_{\bar a}\circ F_{l_{\bar a^{-1}}(\bar g)}(\hat \Psi|_{G^F}, S^F, n).\]
      With this observation the proof can be carried out in exactly 
      the same way as in the case of a homotopy action. 
  \end{proof}

  \begin{remark}
    The proof of Theorem~\ref{thm:transfer-red_implies_FICwF} 
    given above only uses that $G$ is almost (strongly) transfer reducible
    over $\calf$, not that $G$ is (strongly) transfer reducible. 
    Consequently, Theorem~\ref{thm:transfer-red_implies_FICwF}
    remains true if we replace the assumption 
    ``(strongly) transfer reducible''    
    by the weaker assumption ``almost (strongly) transfer reducible''.
  \end{remark}


\section{The Farrell-Jones Conjecture with wreath products}
 \label{sec:FJ-wreath}

  \begin{definition}
    \label{def:FJwF}
    A group $G$ is said to satisfy the $L$-theoretic 
    Farrell-Jones Conjecture \emph{with wreath products}
    with respect to the family $\calf$
    if for any finite group $F$ the wreath product
    $G \wr F$ satisfies the $L$-theoretic
    Farrell-Jones Conjecture with respect to the 
    family $\calf$ 
    (in the sense of Definition~\ref{def:L-theoretic_Farrell-Jones_Conjecture}).
    
    If the family $\calf$ is not mentioned, it is by default the family $\VCyc$ of
    virtually cyclic subgroups.

    There are similar versions with wreath products 
    of the $K$-theoretic Farrell-Jones 
    Conjecture and the $K$-theoretic Farrell-Jones 
    Conjecture up to dimension $1$.
  \end{definition}
  
  The FJC with wreath products has first 
  been used in~\cite{Farrell-Roushon(2000)} to deal with finite extensions, 
  see also~\cite[Definition~2.1]{Roushon(2008FJJ3)}.

  \begin{remark} \label{rem:inheritance-FJCwreath}
    The inheritance properties of the FJC 
    for direct products, subgroups, exact sequences and directed
    colimits hold also for the FJC with wreath products
    and can be deduced from the corresponding properties 
    of the FJC itself.
    See for example~\cite[Lemma~3.2, 3.15, 3.16, Satz~3.5]{Kuehl(2009)}.

    For a group $G$ and two finite groups $F_1$ and $F_2$ we have
    $(H \wr F_1 )\wr F_2 \subset H \wr (F_1 \wr F_2)$ and 
    $F_1 \wr F_2$ is finite.
    In particular, if $G$ satisfies the FJC
    with wreath products, then the same is true for any wreath product
    $G \wr F$ with $F$ finite.

    The main advantage of the FJC with
    wreath products is that in addition it passes to overgroups
    of finite index.
    Let $G'$ be an overgroup of $G$ of finite index, i.e., 
    $G \subset G'$, $[G':G] < \infty$. Let $S$ denote a system of representatives of the cosets $G'/G$.
    Then $N:=\bigcap_{s\in S} sGs^{-1}$ is a finite index, normal subgroup of $G'$. 
    Now $G'$ can be embedded in $N \wr G'/N$ (see~\cite[Section~2.6]
    {Dixon-Mortimer(1996)}, \cite[Section~2]{Farrell-Roushon(2000)}). 
    This implies that $G'$ satisfies the FJC with wreath products,
    because $N \wr G'/N$ does by the inheritance properties
    discussed before. 
  \end{remark}

  \begin{theorem} \label{prop:L-FICwF-for-GL_nZ}
    The $L$-theoretic FJC with wreath products
    holds for $\GL_n(\IZ)$.
  \end{theorem}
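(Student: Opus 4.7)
My proof proceeds by induction on $n$. The base case $n = 1$ is trivial, since $\GL_1(\IZ) = \{\pm 1\}$ is finite and any wreath product of finite groups is finite.

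For the inductive step, fix a finite group $F$ and set $\hat G := \GL_n(\IZ) \wr F$; the goal is to show that $\hat G$ satisfies the $L$-theoretic FJC with respect to $\VCyc$. By Theorem~\ref{the:transfer-reducible} the group $\GL_n(\IZ)$ is transfer reducible over $\calf_n$, so Theorem~\ref{thm:transfer-red_implies_FICwF}~\ref{thm:transfer-red_implies_FICwF:normal} yields that $\hat G$ satisfies the $L$-theoretic FJC with respect to $\calf_n^\wr$. By the Transitivity Principle (Proposition~\ref{prop:trans-princ}), it is enough to verify that every $H \in \calf_n^\wr$ satisfies the $L$-theoretic FJC.

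By the definition of $\calf_n^\wr$, such an $H$ contains a finite-index subgroup $H_0$ that embeds into a finite direct product $H_1 \times \dots \times H_r$ with each $H_i \in \calf_n$. I upgrade the target to the FJC with wreath products and use the inheritance properties collected in Remark~\ref{rem:inheritance-FJCwreath}: closure under subgroups, direct products, extensions, and passage to finite-index overgroups (the last being precisely why the ``wreath products'' version is introduced). These inheritances reduce everything to showing that each $H_i \in \calf_n$ satisfies the $L$-theoretic FJC with wreath products. If $H_i$ is virtually cyclic, then for any finite group $F'$ the wreath product $H_i \wr F'$ is virtually poly-cyclic, hence satisfies the FJC by~\cite{Bartels-Farrell-Lueck(2011cocomlat)}. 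Otherwise, $H_i$ is a subgroup of a group $K$ fitting into an extension
\[
1 \to P \to K \to \prod_{j = 1}^{r'} \GL_{n_j}(\IZ) \to 1
\]
with $n_j < n$ and $P$ finitely generated free abelian. By the induction hypothesis each $\GL_{n_j}(\IZ)$ satisfies the $L$-theoretic FJC with wreath products; product inheritance gives it for $\prod_j \GL_{n_j}(\IZ)$; since $P$ is poly-cyclic the same holds for $P$; extension inheritance gives it for $K$; and subgroup inheritance gives it for $H_i$. This closes the induction.

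The only conceptual point in this proof is the decision to carry the induction hypothesis in the stronger ``with wreath products'' form rather than plain FJC. This is forced by the $L$-theoretic statement~\cite[Theorem~1.1(ii)]{Bartels-Lueck(2012annals)}, which a priori produces index-$2$ overgroups, so one needs an induction hypothesis stable under finite-index overgroups; the wreath product formalism supplies exactly this stability. Apart from this bookkeeping, the argument is a direct consequence of the deep inputs already in place: Theorem~\ref{the:transfer-reducible}, Theorem~\ref{thm:transfer-red_implies_FICwF}, and the FJC for virtually poly-cyclic groups.
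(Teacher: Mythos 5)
Your proposal is correct and follows essentially the same route as the paper: induction on $n$, transfer reducibility of $\GL_n(\IZ)$ over $\calf_n$ combined with Theorem~\ref{thm:transfer-red_implies_FICwF}, the Transitivity Principle, and the inheritance properties of the FJC with wreath products to reduce members of $(\calf_n)^\wr$ to virtually poly-cyclic groups handled by~\cite{Bartels-Farrell-Lueck(2011cocomlat)}. You merely spell out in slightly more detail the reduction from $(\calf_n)^\wr$ to $\calf_n$ and the case distinction inside $\calf_n$, which the paper compresses into an appeal to Remark~\ref{rem:inheritance-FJCwreath}.
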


  \begin{proof}
    We proceed by induction over $n$.
    As $\GL_1(\IZ)$ is finite, the induction beginning is trivial.

    Let $F$ be a finite group. 
    Since $\GL_n(\IZ)$ is transfer reducible over $\calf_n$ by 
    Theorem~\ref{the:transfer-reducible} it follows 
    from Theorem~\ref{thm:transfer-red_implies_FICwF}~\ref{thm:transfer-red_implies_FICwF:normal} that
    $\GL_n(\IZ) \wr F$ satisfies the  $L$-theoretic FJC 
    with respect to $(\calf_n)^\wr$.
    It remains to replace $(\calf_n)^\wr$ by the family $\VCyc$.
    By the Transitivity Principle~\ref{prop:trans-princ} 
    it suffices to prove the $L$-theoretic FJC
    (with respect to $\VCyc$)
    for all groups $H \in (\calf_n)^\wr$.

    Because the FJC with wreath products
    passes to products and finite index overgroups, see
    Remark~\ref{rem:inheritance-FJCwreath} 
    it suffices to consider $H \in \calf_n$.
   
    Combining the induction assumption for $\GL_k(\IZ)$, $k < n$
    with the inheritance properties for direct products, 
    exact sequences of groups and subgroups 
    (see Remark~\ref{rem:inheritance-FJCwreath})
    it is easy to reduce the  FJC
    with wreath products for members of $\calf_n$ to
    the class of virtually poly-cyclic groups.
    Wreath products of virtually poly-cyclic groups
    with finite groups are again virtually poly-cyclic.
    Thus the result follows in this case 
    from~\cite{Bartels-Farrell-Lueck(2011cocomlat)}.    
  \end{proof}
  
  Because for finite $F$ 
  the wreath product $\GL_n(\IZ) \wr F$ can be embedded into
  $\GL_m(\IZ)$ for some $m > n$, there is really no difference
  between the FJC  and the FJC with wreath products for the 
  collection of groups $\GL_n(\IZ)$, $n \in \IN$. 
  Nevertheless, as discussed in the introduction, 
  for $L$-theory the induction only works for
  the FJC with wreath products.

\begin{remark}\label{rem:hyperbolic_groups}
   We also conclude that a hyperbolic group $G$ satisfies the $K$- and
   $L$-theoretic FJC with wreath products.  
   A hyperbolic group is strongly transfer reducible over $\VCyc$ by 
   \cite[Example~3.2]{Wegner(2012FJ_CAT0)} and in particular 
   transfer reducible over $\VCyc$. Hence it satisfies the K-theoretic 
   FJC in all dimensions and the L-theoretic FJC with respect to the family 
   $\VCyc^\wr$ by 
   Theorem~\ref{thm:transfer-red_implies_FICwF}~\ref{thm:transfer-red_implies_FICwF:strongly}.
   By the transitivity principle~\ref{prop:trans-princ} it suffices to show the
   FJC for all groups from $\VCyc^\wr$.  Since those groups are virtually
   polycyclic the FJC holds for them 
   by~\cite[Theorem~0.1]{Bartels-Farrell-Lueck(2011cocomlat)}.
   
   Notice that a group is hyperbolic if a subgroup of finite index is
   hyperbolic. 
   Nevertheless, it is desirable to have the wreath product version
   for hyperbolic groups also since it inherits to colimits of 
   hyperbolic groups
   and many constructions of groups with exotic properties occur as colimits of
   hyperbolic groups.
\end{remark}


\section{Proof of the General Theorem}
\label{sec:Proof_of_the_General_Theorem}

\begin{lemma}\label{lem:from_Z_to_ring_of_integers}
  Let $R$ be a ring whose underlying abelian group  is finitely generated.
  Then both the $K$-theoretic and the $L$-theoretic FJC
  hold for $\GL_n(R)$ and $\SL_n(R)$.
\end{lemma}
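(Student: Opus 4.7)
The plan is to embed $\GL_n(R)$ as a finite extension of a subgroup of $\GL_m(\IZ)$ for some $m$, and then combine the Main Theorem (together with Theorem~\ref{thm:K-FJ-for-GLnZ} and Theorem~\ref{prop:L-FICwF-for-GL_nZ}) with the inheritance properties of the FJC with wreath products.

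First I would isolate the torsion. Since the underlying abelian group of $R$ is finitely generated, its torsion subgroup $T \subseteq R$ is a finite abelian group. It is a two-sided ideal: if $t \in T$ satisfies $m t = 0$, then $m(rt) = r(mt) = 0$ and $m(tr) = (mt)r = 0$ for every $r \in R$, so $rt, tr \in T$. The quotient ring $\overline{R} := R/T$ has torsion-free finitely generated underlying abelian group, so $\overline{R} \cong \IZ^d$ as abelian group for some $d$. Left multiplication defines a unital ring homomorphism $\ell \colon \overline{R} \to \End_{\IZ}(\overline{R}) \cong M_d(\IZ)$ which is injective because $\ell(\bar r)(\bar 1) = \bar r$. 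Hence the induced map $\GL_n(\overline{R}) \to \GL_n(M_d(\IZ)) = \GL_{nd}(\IZ)$ is injective.

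Next I would use the surjection $R \to \overline{R}$ to obtain a short exact sequence of groups
\[
1 \to K \to \GL_n(R) \xrightarrow{\pi} H \to 1,
\]
where $H := \pi(\GL_n(R)) \subseteq \GL_n(\overline{R}) \subseteq \GL_{nd}(\IZ)$, and where $K = \ker \pi$ is contained in the finite set $I + M_n(T)$, hence is finite. By the Main Theorem combined with Theorem~\ref{thm:K-FJ-for-GLnZ} and Theorem~\ref{prop:L-FICwF-for-GL_nZ} (and the observation right after Theorem~\ref{prop:L-FICwF-for-GL_nZ} that the wreath product and non-wreath versions coincide for the family $\{\GL_m(\IZ)\}$), the group $\GL_{nd}(\IZ)$ satisfies both the $K$- and $L$-theoretic FJC with wreath products. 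By the inheritance property under passage to subgroups (Remark~\ref{rem:inheritance-FJCwreath}), $H$ inherits this property. The finite group $K$ trivially satisfies it. Applying the inheritance property for exact sequences (also Remark~\ref{rem:inheritance-FJCwreath}), $\GL_n(R)$ satisfies the $K$- and $L$-theoretic FJC with wreath products, and in particular the ordinary FJC. Finally, $\SL_n(R) \subseteq \GL_n(R)$, so subgroup inheritance yields the conclusion for $\SL_n(R)$.

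There is no serious obstacle: the argument is a packaging of standard inheritance properties once the torsion has been removed by passing to $R/T$. The only point requiring a moment's care is that $T$ really is a two-sided ideal and that the kernel of $\GL_n(R) \to \GL_n(R/T)$ is finite, both of which are immediate consequences of the finiteness of $T$.
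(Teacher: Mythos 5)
Your argument is correct, but it packages the reduction differently from the paper. The paper forgets the $R$-module structure altogether: it regards $\GL_n(R) \cong \aut_R(R^n)$ as a subgroup of $\aut_{\IZ}(R^n) \cong \aut_{\IZ}(\IZ^k \times T)$ and then exploits the exact sequence $1 \to \hom_{\IZ}(\IZ^k,T) \to \aut_{\IZ}(\IZ^k \times T) \to \GL_k(\IZ) \times \aut_{\IZ}(T) \to 1$, whose kernel and right-hand finite factor are handled by the finite-kernel extension result \cite[Corollary~1.12]{Bartels-Farrell-Lueck(2011cocomlat)} together with the FJC for $\GL_k(\IZ)$. You instead keep the ring structure: you kill the torsion ideal $T \subseteq R$, embed $R/T$ into $M_d(\IZ)$ by the left regular representation, and exhibit $\GL_n(R)$ itself as an extension with finite kernel of a subgroup of $\GL_{nd}(\IZ)$. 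Both proofs rest on the same two pillars --- the FJC for $\GL_m(\IZ)$ and inheritance under subgroups and extensions with finite kernel --- but your decomposition is ``quotient first, then embed'' while the paper's is ``embed first, then analyze the automorphism group''; the paper's forgetful-map trick is slightly leaner (no need to check that $T$ is an ideal or to invoke the regular representation, and it stays entirely within the plain FJC). What your route buys is the FJC \emph{with wreath products} for $\GL_n(R)$ directly, which is what the proof of the General Theorem actually needs and which the paper recovers only afterwards via the embedding $\GL_n(R) \wr H \to \GL_n(R \wr H)$. One small point of care: the ``exact sequence'' inheritance of Remark~\ref{rem:inheritance-FJCwreath} is not the blanket statement that kernel and quotient satisfying the conjecture forces the extension to satisfy it; one needs the preimages of virtually cyclic subgroups of the quotient to satisfy it. Since your kernel $K$ is finite, these preimages are again virtually cyclic and the hypothesis holds --- this is exactly the finite-kernel situation the paper handles by citing \cite{Bartels-Farrell-Lueck(2011cocomlat)} --- but you should make that step explicit.
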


\begin{proof} 
  Since the FJC passes to subgroups 
  (by~\cite{Bartels-Reich(2007coeff)}) 
  we only need to treat $\GL_n(R)$. 
  Choose an isomorphism of abelian groups 
  $h' \colon R^n \xrightarrow{\cong} \IZ^k \times T$ 
  for  some natural number $k$ and a finite abelian group $T$. We
  obtain an injection of groups
  \[
  \GL_n(R) \cong \aut_R(R^n) \xrightarrow{f} \aut_{\IZ}(R^n) \xrightarrow{h}
  \aut_{\IZ}(\IZ^{k} \times T),
  \]
  where $f$ is the forgetful map and $h$ comes by conjugation with
  $h'$. 
  Since the FJC  passes to subgroups, it
  suffices to prove the FJC  for $\aut_{\IZ}(\IZ^{k} \times T)$.  
  There is an obvious exact sequence of groups
  \[1 \to \hom_{\IZ}(\IZ^k,T) \to \aut_{\IZ}(\IZ^{k} \times T) 
                        \to \GL_k(\IZ) \times \aut_{\IZ}(T) \to 1.
  \]
  Since $\hom_{\IZ}(\IZ^k,T) $ and $\aut_{\IZ}(T)$ are finite and 
  $\GL_k(\IZ)$  satisfies the FJC, 
  Lemma~\ref{lem:from_Z_to_ring_of_integers} follows 
  from~\cite[Corollary~1.12]{Bartels-Farrell-Lueck(2011cocomlat)}.
\end{proof}

\begin{proof}[Proof of General Theorem] Let $G$ be a
  group which is commensurable to a subgroup $H \subseteq \GL_n(R)$ for some
  natural number $n$.  We have to show that $G$ satisfies the FJC 
  with  wreath products.  
  We have explained in Remark~\ref{rem:inheritance-FJCwreath}
  that the FJC with wreath products 
  passes to overgroups of finite
  index and all subgroups.  
  Therefore it suffices to show that the
  FJC with wreath products holds for $\GL_n(R)$.
  
  Consider a finite group $H$. 
  Let $R \wr H$ be the twisted group ring of $H$
  with coefficients in $\prod_H R$ where the $H$-action on $\prod_H R$ is given
  by permuting the factors. 
  Since $R$ is finitely generated  as abelian group by assumption, 
  the same is true for $R \wr H$. 
  Hence $\GL_n(R \wr H)$ satisfies
  the FJC by Lemma~\ref{lem:from_Z_to_ring_of_integers}.

  There is an obvious injective group homomorphism 
  $\GL_n(R) \wr H=GL_n(R)^H \rtimes H \to \GL_n(R\wr H)$.
  It extends the obvious group monomorphism $\GL_n(R)^H = \GL_n(R^H) \to \GL_n(R \wr H)$ via
  $H \to GL_n( R \wr H)$, $h\mapsto h\cdot I_n$.
  Since the FJC passes to subgroups it holds for $\GL_n(R) \wr H$.

\end{proof}




\end{document}